\documentclass{amsart}

\pagestyle{headings}

\usepackage{amsmath,amssymb,amsthm,amsfonts,mathrsfs}
\usepackage{amscd,stmaryrd}
\usepackage{mathtools}
\usepackage[usenames,dvipsnames]{color}
\usepackage{hyperref}
\usepackage{graphicx,subfigure}
\definecolor{nicegreen}{RGB}{0,180,0}
\hypersetup{colorlinks=true,citecolor=nicegreen,linkcolor=Red,urlcolor=blue,pdfstartview=FitH}
\usepackage[divide={2.45cm,*,2.45cm}]{geometry}
\usepackage{enumerate}
\usepackage{color}
\usepackage[all]{xy}
\usepackage{tikz-cd}
\usepackage{wasysym}

\allowdisplaybreaks

\newtheorem{thm}{Theorem}[section]
\newtheorem*{thm*}{Theorem}
\newtheorem{cor}[thm]{Corollary}
\newtheorem{corpf}[thm]{Corollary (of proof)}

\newtheorem{lemma}[thm]{Lemma}

\newtheorem{propn}[thm]{Proposition}
\newtheorem{conj}[thm]{Conjecture}
\newtheorem*{propn*}{Proposition}
\newtheorem*{claim}{Claim}
\newtheorem{step}{Step}

\theoremstyle{definition}
\newtheorem{defn}[thm]{Definition}
\newtheorem{rmk}[thm]{Remark}

\newcommand{\T}{\textnormal{T}}

\newcommand{\triv}{\textnormal{triv}}
\newcommand{\sign}{\textnormal{sign}}

\newcommand{\soc}{\textnormal{soc}}
\newcommand{\Hom}{\textnormal{Hom}}

\newcommand{\Ind}{\textnormal{Ind}}
\newcommand{\cind}{\textnormal{c-ind}}
\newcommand{\Ext}{\textnormal{Ext}}

\newcommand{\aff}{\textnormal{aff}}

\newcommand{\inj}{\textnormal{inj}}
\newcommand{\cores}{\textnormal{cor}}
\newcommand{\res}{\textnormal{res}}
\newcommand{\ord}{\textnormal{Ord}}

\newcommand*{\longhookrightarrow}{\ensuremath{\lhook\joinrel\relbar\joinrel\rightarrow}}
\newcommand*{\longtwoheadrightarrow}{\ensuremath{\relbar\joinrel\twoheadrightarrow}}

\newcommand{\sm}[4]{\left(\begin{smallmatrix} #1 & #2 \\ #3 & #4 \end{smallmatrix}\right)}

\newcommand{\cE}{{\mathcal{E}}}

\newcommand{\cH}{{\mathcal{H}}}

\newcommand{\cJ}{{\mathcal{J}}}
\newcommand{\cK}{{\mathcal{K}}}

\newcommand{\cO}{{\mathcal{O}}}

\newcommand{\cR}{{\mathcal{R}}}

\newcommand{\cZ}{{\mathcal{Z}}}

\newcommand{\bB}{{\mathbf{B}}}

\newcommand{\bG}{{\mathbf{G}}}
\newcommand{\bH}{{\mathbf{H}}}

\newcommand{\bL}{{\mathbf{L}}}
\newcommand{\bM}{{\mathbf{M}}}
\newcommand{\bN}{{\mathbf{N}}}

\newcommand{\bP}{{\mathbf{P}}}

\newcommand{\bS}{{\mathbf{S}}}
\newcommand{\bT}{{\mathbf{T}}}
\newcommand{\bU}{{\mathbf{U}}}

\newcommand{\bZ}{{\mathbf{Z}}}

\newcommand{\bbC}{{\mathbb{C}}}

\newcommand{\bbF}{{\mathbb{F}}}

\newcommand{\bbQ}{{\mathbb{Q}}}

\newcommand{\bbZ}{{\mathbb{Z}}}

\newcommand{\sA}{{\mathscr{A}}}
\newcommand{\sB}{{\mathscr{B}}}

\newcommand{\sI}{{\mathscr{I}}}

\newcommand{\fA}{{\mathfrak{A}}}
\newcommand{\fB}{{\mathfrak{B}}}
\newcommand{\fC}{{\mathfrak{C}}}
\newcommand{\fD}{{\mathfrak{D}}}
\newcommand{\fE}{{\mathfrak{E}}}
\newcommand{\fF}{{\mathfrak{F}}}

\newcommand{\fI}{{\mathfrak{I}}}

\newcommand{\fe}{{\mathfrak{e}}}

\newcommand{\ffi}{{\mathfrak{i}}}

\newcommand{\fm}{{\mathfrak{m}}}
\newcommand{\fn}{{\mathfrak{n}}}

\newcommand{\fp}{{\mathfrak{p}}}

\newcommand{\fv}{{\mathfrak{v}}}

\begin{document}
\nocite{}

\title{Functorial properties of pro-$p$-Iwahori cohomology}
\date{}
\author{Karol Kozio\l}
\address{Department of Mathematics, University of Michigan, 2074 East Hall, 530 Church Street, Ann Arbor, MI 48109-1043} \email{kkoziol@umich.edu}

\subjclass[2010]{20C08 (primary), 20J06, 22E50, 22D35 (secondary)}

\begin{abstract}
Suppose $F$ is a finite extension of $\mathbb{Q}_p$, $G$ is the group of $F$-points of a connected reductive $F$-group, and $I_1$ is a pro-$p$-Iwahori subgroup of $G$.  We construct two spectral sequences relating derived functors on mod-$p$ representations of $G$ to the analogous functors on Hecke modules coming from pro-$p$-Iwahori cohomology.  More specifically: (1) using results of Ollivier--Vign\'eras, we provide a link between the right adjoint of parabolic induction on pro-$p$-Iwahori cohomology and Emerton's functors of derived ordinary parts; and (2) we establish a ``Poincar\'e duality spectral sequence'' relating duality on pro-$p$-Iwahori cohomology to Kohlhaase's functors of higher smooth duals.  As applications, we calculate various examples of the Hecke modules $\textnormal{H}^i(I_1,\pi)$.  
\end{abstract}

\maketitle

\section{Introduction}

Functorial constructions abound in the representation theory of $p$-adic reductive groups.  For example, when working with smooth representations over $\bbC$, supercuspidal representations may be characterized as those annihilated by all Jacquet functors, while duality functors arise naturally in functional equations of automorphic $L$-functions (in the form of contragredient representations).  Such functors give the category of smooth representations a rich and intricate structure.

Motivated by recent advances in the mod-$p$ and $p$-adic Langlands programs (cf. \cite{ahhv}, \cite{breuilpaskunas}, \cite{ceggps}, \cite{colmez:gl2}, \cite{paskunas:montrealfunctor}, \cite{scholze:LT}, \cite{Ast319}, \cite{Ast330}, \cite{Ast331}), we would like to understand similar constructions when dealing with representations over coefficient fields of characteristic $p$.  Unfortunately, most of the analogously defined functors fail to be exact.  It is therefore natural (and indeed necessary) to consider their derived versions.

In order to state our setup more precisely, we introduce some notation.  Let $\bG$ denote a connected reductive group defined over a finite extension $F$ of $\bbQ_p$, and denote by $G$ its group of $F$-points.  We also let $I_1$ denote a choice of pro-$p$-Iwahori subgroup of $G$, which we assume for this introduction to be torsion-free.  All representations and modules appearing in this article will have coefficients in a finite field $C$ of characteristic $p$.  Our starting point is the following theorem of Schneider \cite{schneider:dga}: there exists an equivalence of triangulated categories
$$D\big(\mathfrak{Rep}^\infty(G)\big) \stackrel{\sim}{\longrightarrow} D\big(\mathfrak{dgMod-}\cH^\bullet\big),$$
where $\mathfrak{Rep}^\infty(G)$ denotes the category of smooth (mod-$p$) representations of $G$, $\cH^\bullet$ denotes the differential graded Hecke algebra of $G$ with respect to $I_1$, and $\mathfrak{dgMod-}\cH^\bullet$ denotes the category of differential graded modules over $\cH^\bullet$.  The equivalence is given by sending a complex $\pi^\bullet$ of smooth representations to the complex $\textnormal{R}\textnormal{H}^0(I_1,\pi^\bullet)$ of derived $I_1$-invariants, which naturally comes equipped with an action of the Hecke DGA $\cH^\bullet$.

Using the derived equivalence above, one would hope that various functors on the category $D(\mathfrak{Rep}^\infty(G))$ could be transferred to functors on the category $D(\mathfrak{dgMod-}\cH^\bullet)$.  The purpose of this article is to address several versions of this expectation.  Namely, we simplify the setup somewhat, and consider the cohomology of the above derived equivalence, along with derived functors on both sides.  Thus, given a smooth $G$-representation $\pi$, we consider its pro-$p$-Iwahori cohomology spaces $\textnormal{H}^i(I_1,\pi)$.  These spaces come equipped with a right action of $\cH$, the pro-$p$-Iwahori--Hecke algebra of $I_1$-bi-invariant, compactly supported, $C$-valued functions on $G$.  (The algebra $\cH$ is related to the DGA $\cH^\bullet$ via the isomorphism $h^0(\cH^\bullet) \cong \cH$.)  Our goal is to investigate how various derived functors on $\pi$ are related to derived functors on $\textnormal{H}^i(I_1,\pi)$.  

\vspace{10pt}

We now describe the contents of this article.  The first situation we consider in Section \ref{sec:ordparts} is that of passing from $G$ to a Levi subgroup.  Suppose $P = M \ltimes N$ is a rational parabolic subgroup of $G$, with rational Levi subgroup $M$ (chosen to be ``compatible'' with $I_1$), and recall that we have the functor of parabolic induction
$$\Ind_P^G: \mathfrak{Rep}^{\infty}(M) \longrightarrow \mathfrak{Rep}^{\infty}(G).$$
This functor is exact, and by \cite{vigneras:rightadj}, it admits a right adjoint
$$\cR_P^G: \mathfrak{Rep}^{\infty}(G) \longrightarrow \mathfrak{Rep}^{\infty}(M).$$
On the side of $\cH$-modules, we also have a parabolic induction functor
$$\Ind_{\cH_M}^{\cH}: \mathfrak{Mod-}\cH_M \longrightarrow \mathfrak{Mod-}\cH,$$
where $\cH_M$ denotes the pro-$p$-Iwahori--Hecke algebra of $M$ with respect to $I_{M,1} := I_1 \cap M$.  As above, this parabolic induction functor admits a right adjoint
$$\cR_{\cH_M}^{\cH}: \mathfrak{Mod-}\cH \longrightarrow \mathfrak{Mod-}\cH_M.$$
Work of Ollivier--Vign\'eras in \cite{olliviervigneras} shows that the following diagram commutes up to natural equivalence:
\begin{center}
\begin{tikzcd}[column sep = large, row sep = large]
\mathfrak{Rep}^{\infty}(G) \ar[r, "\cR^G_{P}"] \ar[d, "\textnormal{H}^0(I_1{,}-)"'] & \mathfrak{Rep}^{\infty}(M) \ar[d, "\textnormal{H}^0(I_{M,1}{,}-)"] \\
\mathfrak{Mod-}\cH \ar[r, "\cR_{\cH_M}^{\cH}"] & \mathfrak{Mod-}\cH_M
\end{tikzcd}
\end{center}
Our first result computes the derived version of this diagram.

\begin{thm}
Suppose $\pi \in \mathfrak{Rep}^\infty(G)$ is an admissible representation.  Then we have an $E_2$ spectral sequence of $\cH_M$-modules
$$\textnormal{H}^i\big(I_{M,1},\textnormal{R}^j\cR_P^G(\pi)\big) \Longrightarrow \cR^{\cH}_{\cH_M}\big(\textnormal{H}^{i + j}(I_1, \pi)\big).$$
\end{thm}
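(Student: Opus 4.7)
The plan is to interpret the claimed spectral sequence as a hypercohomology spectral sequence for $\textnormal{H}^0(I_{M,1}, -)$ applied to a complex that computes the derived functors of $\cR_P^G$, and then to rewrite the abutment via the Ollivier--Vign\'eras square together with exactness of $\cR^{\cH}_{\cH_M}$.

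First I would fix an injective resolution $\pi \to J^\bullet$ in $\mathfrak{Rep}^\infty(G)$. Since $\Ind_P^G$ is exact and left adjoint to $\cR_P^G$, the functor $\cR_P^G$ preserves injectives, so $\cR_P^G(J^\bullet)$ is a complex of injective smooth $M$-representations whose cohomology in degree $j$ is $\textnormal{R}^j \cR_P^G(\pi)$. The restriction functor $\mathfrak{Rep}^\infty(M) \to \mathfrak{Rep}^\infty(I_{M,1})$ admits the exact left adjoint $\cind_{I_{M,1}}^M$, so injective smooth $M$-representations remain injective on restriction and are in particular acyclic for the smooth functor $\textnormal{H}^{>0}(I_{M,1}, -)$. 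The Cartan--Eilenberg spectral sequence for the hyperderived functor $\textnormal{R}\textnormal{H}^0(I_{M,1}, -)$ applied to the complex $\cR_P^G(J^\bullet)$ then reads
$$E_2^{i,j} = \textnormal{H}^i\bigl(I_{M,1}, \textnormal{R}^j \cR_P^G(\pi)\bigr) \Longrightarrow \textnormal{H}^{i+j}\bigl(\textnormal{H}^0(I_{M,1}, \cR_P^G(J^\bullet))\bigr).$$

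Next I would invoke the Ollivier--Vign\'eras commutative square termwise to identify the complex $\textnormal{H}^0(I_{M,1}, \cR_P^G(J^\bullet))$ with $\cR^{\cH}_{\cH_M}\bigl(\textnormal{H}^0(I_1, J^\bullet)\bigr)$. The cohomology of the inner complex $\textnormal{H}^0(I_1, J^\bullet)$ in degree $n$ is by definition $\textnormal{H}^n(I_1, \pi)$. Provided that $\cR^{\cH}_{\cH_M}$ is exact on $\mathfrak{Mod-}\cH$, it commutes with passage to cohomology, and the abutment becomes $\cR^{\cH}_{\cH_M}\bigl(\textnormal{H}^{i+j}(I_1, \pi)\bigr)$, as claimed.

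The main obstacle is the exactness of $\cR^{\cH}_{\cH_M}$: as a right adjoint it is automatically left exact, but right-exactness is not formal and must be extracted from the explicit description of the right adjoint in Ollivier--Vign\'eras (for instance, as cutting out a certain submodule on which a localization of $\cH_M$ acts). The admissibility hypothesis on $\pi$ plays two roles: it ensures that the higher derived functors $\textnormal{R}^j \cR_P^G(\pi)$ coincide with Emerton's derived ordinary parts, placing the $E_2$ page in its natural framework, and it keeps the pro-$p$-Iwahori cohomology modules $\textnormal{H}^i(I_1, \pi)$ inside a well-behaved subcategory of $\mathfrak{Mod-}\cH$ where compatibilities with $\cR^{\cH}_{\cH_M}$ can be checked.
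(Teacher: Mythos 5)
Your proposal has the right skeleton — two Grothendieck (equivalently, Cartan--Eilenberg hypercohomology) spectral sequences, glued by the Ollivier--Vign\'eras square, with admissibility of $\pi$ used to identify the abutment — and the first half (injectivity of $\cR_P^G(J^\bullet)$, $I_{M,1}$-acyclicity, the resulting $E_2$-page) matches the paper's argument. The gap, which you flag but do not close, is precisely the step ``provided that $\cR^{\cH}_{\cH_M}$ is exact on $\mathfrak{Mod-}\cH$''. This hypothesis is false: the paper's explicit description gives $\cR^{\cH}_{\cH_M}(\fm) = \Hom_{\cH_M^+,\theta}(\cH_M,\fm) \cong \varprojlim_{v\mapsto v\cdot\T_{z^{-1}}}\fm$, an inverse limit over a tower, which is only left exact (and indeed has nonzero higher derived functors $\varprojlim^i$ in general). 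So the explicit description you propose to mine for right exactness actually demonstrates the opposite.

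The paper closes the gap with two ingredients you are missing. First, a nontrivial lemma (credited to Abe) shows that if $J \in \mathfrak{Rep}^\infty(G)$ is injective, then $\textnormal{H}^0(I_1,J)$ is $\cR^{\cH}_{\cH_M}$-acyclic: one embeds $J$ into $\bigl(\prod_a C[G]^\vee\bigr)^\infty$, splits off a direct summand, identifies the $I_1$-invariants with $\bigl(\bigoplus_a C[I_1\backslash G]\bigr)^\vee$, and then shows that $\fm^\vee$ is always $\cR^{\cH}_{\cH_M}$-acyclic by dualizing a projective resolution of $\fm$ and invoking \cite[Prop.\ 3.6]{abe:extensions}, which expresses $\cR^{\cH}_{\cH_M}$ on duals as a composite of exact functors. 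This gives the Grothendieck spectral sequence $\textnormal{R}^i\cR^{\cH}_{\cH_M}(\textnormal{H}^j(I_1,\pi)) \Rightarrow \textnormal{R}^{i+j}(\cR^{\cH}_{\cH_M}\circ\textnormal{H}^0(I_1,-))(\pi)$, which is exactly what identifies $h^n$ of $\cR^{\cH}_{\cH_M}\bigl(\textnormal{H}^0(I_1,J^\bullet)\bigr)$ with the iterated hyperderived functor. Second, admissibility forces $\dim_C \textnormal{H}^j(I_1,\pi) < \infty$, so the towers computing $\textnormal{R}^i\cR^{\cH}_{\cH_M}(\textnormal{H}^j(I_1,\pi))$ satisfy Mittag--Leffler and these groups vanish for $i>0$; this collapses the second spectral sequence and delivers the abutment $\cR^{\cH}_{\cH_M}(\textnormal{H}^{i+j}(I_1,\pi))$. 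Your remark that admissibility puts the $\textnormal{H}^i(I_1,\pi)$ ``inside a well-behaved subcategory'' is the right instinct for ingredient (2), but ingredient (1) is genuinely essential and is the heart of the proof; without it, or without global exactness of $\cR^{\cH}_{\cH_M}$ (which fails), the natural map $h^n\bigl(\cR^{\cH}_{\cH_M}(C^\bullet)\bigr) \to \cR^{\cH}_{\cH_M}\bigl(h^n(C^\bullet)\bigr)$ has no reason to be an isomorphism.
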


The proof of the above result follows from some standard manipulations with derived functors, along with the explicit description of the functor $\cR_{\cH_M}^\cH$.

While the above theorem has the advantage of being valid for arbitrary reductive groups, very little is known about the functors $\textnormal{R}^j\cR_P^G$ (to the author, at least).  Indeed, even the construction of the underived functor $\cR^G_P$ is non-explicit, and follows from abstract category theoretic existence arguments.  In order to actually compute the above spectral sequence, we  employ the following result, due to Abe--Henniart--Vign\'eras \cite{AHV}:  if $\pi$ is an admissible $G$-representation, then we have
$$\cR_P^G(\pi) \cong \ord_{P^-}^G(\pi),$$
where $P^-$ denotes the parabolic subgroup opposite to $P$, and $\ord_{P^-}^G$ denotes Emerton's functor of ordinary parts, constructed in \cite{emerton:ordI}.  The latter functor has the advantage of having an explicit description (see Subsubsection \ref{smooth reps} below).  Moreover, if $\pi$ is an admissible $G$-representation, then $\ord_{P^-}^G(\pi)$ is an admissible $M$-representation.  Thus, the above commutative diagram becomes
\begin{center}
\begin{tikzcd}[column sep = large, row sep = large]
\mathfrak{Rep}^{\textnormal{adm}}(G) \ar[r, "\ord^G_{P^-}"] \ar[d, "\textnormal{H}^0(I_1{,}-)"'] & \mathfrak{Rep}^{\textnormal{adm}}(M) \ar[d, "\textnormal{H}^0(I_{M,1}{,}-)"] \\
\mathfrak{Mod-}\cH \ar[r, "\cR_{\cH_M}^{\cH}"] & \mathfrak{Mod-}\cH_M
\end{tikzcd}
\end{center}

We would like to compute some sort of derived version of the above diagram.  There are several difficulties that arise, however.  For one, the category $\mathfrak{Rep}^{\textnormal{adm}}(G)$ of admissible $G$-representations does not have enough injectives.  We must therefore pass to the category $\mathfrak{Rep}^{\textnormal{ladm}}(G)$ of \emph{locally} admissible $G$-representations, which does have enough injectives.  This must be done very carefully, as it is not known whether $\cR_P^G$ agrees with $\ord_{P^-}^G$ on $\mathfrak{Rep}^{\textnormal{ladm}}(G)$.  Nevertheless, imposing some restrictions, we prove the following theorem.

\begin{thm}
Suppose the simply connected cover of the derived subgroup of $\bG$ is $\bS\bL_{2/F}$, and let $\pi \in \mathfrak{Rep}^{\textnormal{adm}}(G)$ denote an admissible representation which has a central character.  Then we have an $E_2$ spectral sequence of $\cH_M$-modules
$$\textnormal{H}^i\big(I_{M,1},\textnormal{R}^j\ord_{P^-}^G(\pi)\big) \Longrightarrow \cR^{\cH}_{\cH_M}\big(\textnormal{H}^{i + j}(I_1,\pi)\big),$$
where $\textnormal{R}^j\ord_{P^-}^G$ is computed in the category $\mathfrak{Rep}^{\textnormal{ladm}}(G)$.  
\end{thm}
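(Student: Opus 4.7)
The plan is to start from the $E_2$-spectral sequence produced by the previous theorem,
\[
\textnormal{H}^i\big(I_{M,1},\textnormal{R}^j\cR_P^G(\pi)\big) \Longrightarrow \cR^{\cH}_{\cH_M}\big(\textnormal{H}^{i+j}(I_1,\pi)\big),
\]
and to identify the $E_2$-terms $\textnormal{R}^j\cR_P^G(\pi)$, derived in $\mathfrak{Rep}^\infty(G)$, with $\textnormal{R}^j\ord_{P^-}^G(\pi)$, derived in $\mathfrak{Rep}^{\textnormal{ladm}}(G)$. Since the two underived functors coincide on $\mathfrak{Rep}^{\textnormal{adm}}(G)$ by Abe--Henniart--Vign\'eras, the whole content of the theorem lies in reconciling the two different ambient categories in which the derived functors are computed.

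To do this, I would fix the central character $\zeta$ of $\pi$ and pass to the category $\mathfrak{Rep}^{\textnormal{ladm},\zeta}(G)$ of locally admissible representations with central character $\zeta$, which has enough injectives. The goal is then to exhibit an injective resolution $\pi \to J^\bullet$ in $\mathfrak{Rep}^{\textnormal{ladm},\zeta}(G)$ satisfying two properties: (a) each $J^i$ is acyclic for $\cR_P^G$ when regarded as an object of $\mathfrak{Rep}^\infty(G)$, and (b) there is a natural isomorphism $\cR_P^G(J^i) \cong \ord_{P^-}^G(J^i)$ of $M$-representations. Granting such a resolution, both derived functors would be computed as the cohomology of the common complex $\cR_P^G(J^\bullet) = \ord_{P^-}^G(J^\bullet)$, yielding the desired identification of $E_2$-terms.

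For property (b), I would use that every locally admissible representation is a directed colimit of its admissible subrepresentations, together with the fact that $\ord_{P^-}^G$ commutes with such colimits by its explicit construction; combined with the Abe--Henniart--Vign\'eras isomorphism on admissibles, this reduces (b) to a compatibility of $\cR_P^G$ with the relevant directed colimits on the specific resolution terms.

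The hard part will be property (a), which demands that $\mathfrak{Rep}^{\textnormal{ladm},\zeta}(G)$-injectives acquire no higher $\cR_P^G$-cohomology when viewed in the larger ambient category $\mathfrak{Rep}^\infty(G)$. This is precisely where the hypothesis on the simply connected cover of the derived subgroup becomes essential: in this rank-one setting only a Borel parabolic contributes non-trivially, and the structure of $\mathfrak{Rep}^{\textnormal{ladm},\zeta}(G)$ is accessible through work of Paskunas and its extensions, providing enough explicit control over injective envelopes to verify the required acyclicity term by term. Once (a) and (b) are in place, the identification $\textnormal{R}^j\cR_P^G(\pi) \cong \textnormal{R}^j\ord_{P^-}^G(\pi)$ follows, and substituting this into the $E_2$-page of the previous spectral sequence gives the statement.
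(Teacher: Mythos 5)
Your proposal diverges from the paper's argument and, as written, has two genuine gaps, both located exactly where you identify ``the hard part.''

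Your property (b) asks for $\cR_P^G(J^i) \cong \ord_{P^-}^G(J^i)$ when $J^i$ is an injective object of $\mathfrak{Rep}^{\textnormal{ladm},\zeta}(G)$. These $J^i$ are locally admissible but typically far from admissible, so the Abe--Henniart--Vign\'eras identification does not apply to them. The paper explicitly flags this as an open problem (``it is not known whether $\cR_P^G$ agrees with $\ord_{P^-}^G$ on $\mathfrak{Rep}^{\textnormal{ladm}}(G)$''), and your proposed reduction does not close it: you say you would write $J^i$ as a directed colimit of admissibles and pass the isomorphism through the colimit, but while $\ord_{P^-}^G$ does commute with such colimits, $\cR_P^G$ is a right adjoint and there is no reason for it to do so. Indeed if one could establish that compatibility, the agreement of $\cR_P^G$ and $\ord_{P^-}^G$ on all of $\mathfrak{Rep}^{\textnormal{ladm}}(G)$ would follow, which is precisely the open question. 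Your property (a) has a parallel difficulty: injectivity of $J^i$ in $\mathfrak{Rep}^{\textnormal{ladm},\zeta}(G)$ does not give $\cR_P^G$-acyclicity in the much larger ambient category $\mathfrak{Rep}^\infty(G)$, where $\textnormal{R}^j\cR_P^G$ is computed with genuinely different injectives; the appeal to control over injective envelopes in the small category does not bear on the derived functors in the large one.

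The paper's route is different and avoids both obstructions by working with resolutions in a subcategory $\fA$ of \emph{admissible} representations $A$ with central characters and with $A|_I$ injective. Because the resolving objects are admissible, the Abe--Henniart--Vign\'eras isomorphism $\cR_P^G \cong \ord_{P^-}^G$ holds on them for free, so the two functors never need to be compared on locally admissible objects. The existence of such resolutions is the content of Conjecture~\ref{conjA} (proved for $F$-semisimple rank $\le 1$), and Conjectures~\ref{conjB} and~\ref{conjC} are then used to identify the resulting derived functors with $\textnormal{R}^j\ord_{P^-}^G$ computed in $\mathfrak{Rep}^{\textnormal{ladm}}(G)$ and with ordinary group cohomology, respectively. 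The spectral sequence is assembled directly from the two derived compositions $\textnormal{R}(\textnormal{H}^0(I_{M,1},-) \circ \ord_{P^-}^G)$ and $\textnormal{R}(\cR^{\cH}_{\cH_M} \circ \textnormal{H}^0(I_1,-))$ and the commutativity of \eqref{admcommdiag2}, not by substituting into the unconditional smooth-case spectral sequence. To repair your argument you would need to replace the ladm-injective resolution $J^\bullet$ by an $\fA$-resolution, at which point you are following the paper's proof.
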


With the hypotheses of the above theorem, the derived functor $\textnormal{R}^j\ord_{P^-}^G$ agrees with the $\delta$-functor $\textnormal{H}^j\ord_{P^-}^G$ defined by Emerton in \cite{emerton:ordII}.  In particular, they may be explicitly computed using results of Emerton and Hauseux (\emph{op. cit.} and \cite{hauseux:parabind}, respectively).

The proof of the above theorem is given for a general reductive group, granting the validity of three interconnected conjectures.  These conjectures in particular guarantee that $\textnormal{R}^j\ord_{P^-}^G \simeq \textnormal{H}^j\ord_{P^-}^G$, and that we may compute $\textnormal{R}^j\ord_{P^-}^G(\pi)$ using certain resolutions in the category $\mathfrak{Rep}^{\textnormal{adm}}(G)$.  In Appendix \ref{app}, we prove the three conjectures for several classes of groups (and in particular for those mentioned in the theorem) using results of Emerton and Pa\v{s}k\={u}nas.  Verifying these conjectures for higher rank groups will likely require new ideas.

The second functor we consider is that of passing to a dual representation.  It is well known that for $G$-representations over $\bbC$, the process of taking smooth vectors in the $\bbC$-linear dual is an exact functor, which preserves admissibility and irreducibility.  On the contrary, when $\pi$ is a $G$-representation over a field of characteristic $p$, the functor
$$\pi \longmapsto S^0(\pi) := (\pi^\vee)^\infty$$
is quite poorly behaved; in particular, it is no longer exact, and annihilates irreducible, admissible, infinite-dimensional representations.  To remedy this, Kohlhaase in \cite{kohlhaase:duality} introduced a contravariant $\delta$-functor of ``higher smooth duals''
$$S^i: \mathfrak{Rep}^{\infty}(G) \longrightarrow \mathfrak{Rep}^{\infty}(G)$$
for $0 \leq i \leq d := \dim_{\bbQ_p}(G)$, extending the functor $S^0$.  Taken together, these functors are much better behaved, and give a satisfactory duality theory on $\mathfrak{Rep}^{\textnormal{adm}}(G)$ (see \emph{op. cit.}, Corollary 3.15).

In Section \ref{sec:duality}, we compare Kohlhaase's $\delta$-functor $(S^i)_{i \geq 0}$ with duality for the functors $\textnormal{H}^i(I_1,-)$ of pro-$p$-Iwahori cohomology.  This is achieved by constructing a certain double complex which incorporates both functors, and examining the two associated spectral sequences using results of Symonds--Weigel \cite{symondsweigel} on the cohomology of $p$-adic analytic groups.  As a result, we obtain the following ``Poincar\'e duality spectral sequence.''

\begin{thm}
Suppose $I_1$ is torsion-free, and $\pi \in \mathfrak{Rep}^\infty(G)$ is a smooth representation.  We then have an $E_2$ spectral sequence of $\cH$-modules
$$\textnormal{H}^i\big( I_1,S^j(\pi)\big) \Longrightarrow \textnormal{H}^{d - i - j}(I_1,\pi)^\vee(\xi),$$
where $\textnormal{H}^{d - i - j}(I_1,\pi)^\vee$ denotes the $\cH$-module whose underlying vector space is the linear dual of $\textnormal{H}^{d - i - j}(I_1,\pi)$, and $(\xi)$ denotes the twist by a certain orientation character.  
\end{thm}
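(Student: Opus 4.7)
My approach would be to identify the desired spectral sequence as the Grothendieck composition spectral sequence for the contravariant functor $\Gamma_{I_1} \circ S^0$ on $\mathfrak{Rep}^\infty(G)$, and then rewrite its abutment using Symonds--Weigel Poincar\'e duality for the torsion-free $p$-adic analytic group $I_1$. Equivalently, the whole argument can be packaged as a double complex $\Hom_{I_1}(Y_\bullet, S^0(P_\bullet))$, where $P_\bullet \twoheadrightarrow \pi$ is a projective resolution in $\mathfrak{Rep}^\infty(G)$ and $Y_\bullet \twoheadrightarrow C$ is a projective resolution of the trivial smooth $I_1$-module.

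The key underived observation is that for any smooth $V$ one has $S^0(V)^{I_1} = (V^\vee)^{I_1} = (V_{I_1})^\vee = \textnormal{H}_0(I_1, V)^\vee$, so the composition $\Gamma_{I_1}\circ S^0$ is the contravariant left-exact ``dual-of-coinvariants'' functor on $\mathfrak{Rep}^\infty(G)$, whose right derived functors are $\textnormal{H}_n(I_1, V)^\vee$. For the Grothendieck spectral sequence to produce the $E_2$ page with $E_2^{i,j} = \textnormal{H}^i(I_1, S^j(\pi))$, I need $S^0$ to send projectives in $\mathfrak{Rep}^\infty(G)$ to $\Gamma_{I_1}$-acyclics. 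This reduces to the case $P = \cind_K^G(\sigma)$ with $K$ compact open and $\sigma$ a projective smooth $K$-representation, for which $S^0(P) \cong \Ind_K^G(\sigma^\vee)$ with $\sigma^\vee$ injective smooth. Mackey decomposition plus Shapiro's lemma express $\textnormal{H}^i(I_1, S^0(P))$ as a product over $K \backslash G / I_1$ of groups of the form $\textnormal{H}^i(g^{-1}Kg \cap I_1, g^{-1}\sigma^\vee)$; each vanishes for $i > 0$, because restriction to a finite-index open subgroup of a compact open preserves injectivity in the smooth category. The resulting (naturally $\cH$-equivariant) spectral sequence takes the form
\[
E_2^{i,j} = \textnormal{H}^i\bigl(I_1, S^j(\pi)\bigr) \;\Longrightarrow\; \textnormal{H}_{i+j}(I_1,\pi)^\vee.
\]

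To match the abutment stated in the theorem, I would then invoke Symonds--Weigel Poincar\'e duality for the torsion-free compact $p$-adic analytic group $I_1$ of dimension $d$: this supplies a natural isomorphism $\textnormal{H}_k(I_1,\pi)^\vee \cong \textnormal{H}^{d-k}(I_1,\pi)^\vee(\xi)$, with $\xi$ the orientation character arising from the dualizing module of the Iwasawa algebra $\iw{I_1}$. Substituting $k = i + j$ transforms the abutment into the required $\textnormal{H}^{d-i-j}(I_1,\pi)^\vee(\xi)$.

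The main obstacle, I expect, will be this last step: identifying the orientation character $\xi$ precisely and verifying that the Symonds--Weigel duality isomorphism is natural in $\pi$ and $\cH$-equivariant in a manner compatible with the convergence of the Grothendieck spectral sequence. A secondary but more routine technical point is the acyclicity check via Mackey--Shapiro, which depends on having a sufficiently concrete description of the projectives in $\mathfrak{Rep}^\infty(G)$ up to direct summands.
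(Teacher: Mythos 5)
Your proposal is blocked at the very first step: over a coefficient field of characteristic $p$, the category $\mathfrak{Rep}^\infty(G)$ does \emph{not} have enough projectives, and neither does $\mathfrak{Rep}^\infty(I_1)$ — these are locally finite Grothendieck-type categories whose duals (pseudocompact $\Lambda$-modules) have enough projectives, which is precisely why they have enough \emph{injectives} but not enough projectives. So a projective resolution $P_\bullet \twoheadrightarrow \pi$ in $\mathfrak{Rep}^\infty(G)$, and a projective resolution $Y_\bullet \twoheadrightarrow C$ of the trivial smooth $I_1$-module, do not exist in the sense you need, and the double complex $\Hom_{I_1}(Y_\bullet, S^0(P_\bullet))$ cannot be formed. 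The reduction to $P = \cind_K^G(\sigma)$ with $\sigma$ ``projective smooth'' is vacuous for the same reason: there are essentially no nonzero projective objects in $\mathfrak{Rep}^\infty(K)$ when $K$ is compact pro-$p$ and $\textnormal{char}(C) = p$.

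A second gap, which would persist even if projectives existed, is that you implicitly require $(S^j)_{j\ge 0}$ to be the right-derived $\delta$-functor of $S^0$ (i.e.\ that $S^j$ effaces on projectives). What is actually established (Lemma \ref{coefface}) is the opposite-handed statement: $S^i$ vanishes on \emph{injectives} for $i < d$, so $S^{d-i} \simeq \textnormal{L}_i S^d$ computed with injective resolutions. The paper's proof is built around exactly this: take an injective resolution $A^\bullet$ of $\pi$, truncate it at degree $d$ (using that $\textnormal{cd}(I_1) = d$), apply the right-exact contravariant functor $S^d$ to obtain the complex $Y^\bullet$, and form the double complex $D^{i,j} = \textnormal{C}^i(I_1, Y^j)$ of inhomogeneous cochains — no projectives anywhere. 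The two filtrations of $\textnormal{Tot}(D)^\bullet$ then give the two spectral sequences; the row filtration degenerates via Lemma \ref{lem-acyclic2} and identifies the abutment with $\textnormal{H}^{d - n}(I_1, \pi)^\vee(\xi)$ using the Symonds--Weigel isomorphism $\Phi^d$ applied to the injective terms (Lemma \ref{heckeisom} and Proposition \ref{prop:orient}), while the column filtration produces the $E_2$ page $\textnormal{H}^i(I_1, S^j(\pi))$. Your instinct to route through a group-homology Poincar\'e duality for $I_1$ and then dualize is a plausible sketch, but it does not survive contact with the actual homological-algebraic constraints of mod-$p$ smooth representation theory; you would need to replace the projective-resolution machinery wholesale by the injective/$S^d$ picture.
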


This spectral sequence may be thought of as an $\cH$-equivariant version of the Tate spectral sequence (cf. \cite[Ch. 1, App. 1, Cor.]{serre:galoiscoh}).  Further, the theorem above partially answers a question posed by Harris in \cite[Ques. 4.5]{harris:specs}.  We also note that there is related work of Sorensen relating Kohlhaase's smooth duals to a duality operation on the category $D(\mathfrak{dgMod-}\cH^\bullet)$, assuming the group $G$ is compact.

To conclude, we give in Section \ref{sec:examples} several example calculations using the above spectral sequences.  In particular, using results of Emerton, Kohlhaase, Pa\v{s}k\={u}nas and others, we obtain the following:
\begin{itemize}
\item When $G = \textnormal{GL}_2(\bbQ_p)$ or $G = \textnormal{SL}_2(\bbQ_p)$ with $p \geq 5$, we are able to compute (essentially) all of the cohomology spaces $\textnormal{H}^i(I_1,\pi)$, where $\pi$ is an absolutely irreducible $G$-representation.  
\item When $G = \textnormal{GL}_3(\bbQ_p)$ and $\textnormal{St}_{\textnormal{GL}_3}$ denotes the Steinberg representation of $G$, we compute some of the spaces $\textnormal{H}^i(I_1,\textnormal{St}_{\textnormal{GL}_3})$.  We also are able to deduce a structural result about the representations $S^j(\textnormal{St}_{\textnormal{GL}_3})$.
\item When $G$ is a general reductive group and $\mathbf{1}_G$ denotes the trivial $G$-representation, we obtain a Poincar\'e duality isomorphism of $\cH$-modules
$$\textnormal{H}^i(I_1, \mathbf{1}_G) \cong \textnormal{H}^{d - i}(I_1, \mathbf{1}_G)^\vee(\xi).$$
When $\pi = \Ind_P^G(\chi)$ is the parabolic induction of a character, we have an analogous isomorphism
$$\textnormal{H}^i\big(I_1, \Ind_P^G(\chi)\big) \cong \textnormal{H}^{\dim_{\bbQ_p}(P) - i}\big(I_1, \Ind_P^G(\chi') \big)^\vee(\xi),$$
where $\chi'$ denotes the dual character of $\chi$ twisted by a certain modulus character.
\end{itemize}

\vspace{10pt}

We have decided to work with spectral sequences and derived functors in this article for the sake of explicitly computing the cohomology spaces $\textnormal{H}^i(I_1,\pi)$.  In the spirit of Schneider's derived equivalence, it should be possible to upgrade these results to the level of total derived functors compatible with DGA structures.  We hope to return to this in future work.

\vspace{10pt}

\noindent \textbf{Acknowledgements.}  I would like to thank Noriyuki Abe for his help with an argument below, and Claus Sorensen for several useful conversations.  I would especially like to thank Peter Schneider, for identifying a consequential error in an earlier draft and suggesting a correction.  Additionally, I would like to thank the referee for helpful comments.  During the preparation of parts of this article, funding was provided by Manish Patnaik's Subbarao Professorship and NSERC grant RGPIN 4622.

\vspace{10pt}

\section{Notation}

\subsection{Basic notation}

Let $p$ denote a prime number, and suppose $F$ is a finite extension of $\bbQ_p$.  For an algebraic $F$-group $\bH$, we let $H := \bH(F)$ denote its group of $F$-points.  Throughout the article we will consider a connected reductive $F$-group $\bG$ and its group of $F$-points $G$.  Let $\boldsymbol{\cZ}$ denote the connected center of $\bG$, and let $\widetilde{\cZ}_0$ denote the maximal compact subgroup of $\cZ$.  The group $\cZ/\widetilde{\cZ}_0$ is free of finite rank, say $s$, and choosing a splitting of the surjection $\cZ \longtwoheadrightarrow \bbZ^{\oplus s}$ gives a set $\{z_1, \ldots, z_s\}$ of central elements in $G$.

Let $\bS$ denote a fixed maximal $F$-split torus of $\bG$, and $\bZ$ its centralizer.  We fix a minimal $F$-parabolic subgroup $\bB$ containing $\bZ$, and denote by $\bU$ its unipotent radical, so that $\bB = \bZ \ltimes \bU$.  More generally, by a standard parabolic subgroup $\bP$ we will mean any $F$-parabolic subgroup containing $\bB$.  We write $\bP = \bM \ltimes \bN$, where $\bN$ is the unipotent radical of $\bP$, and $\bM$ is its Levi component.  It will be assumed that all Levi components $\bM$ contain $\bZ$.  We write $\bP^- = \bM \ltimes \bN^-$ for the opposite parabolic subgroup.

\vspace{5pt}

\subsection{Representations}

We let $C$ denote a finite field of characteristic $p$, which will serve as the field of coefficients for all representations and modules appearing.  We let $\mathfrak{Rep}^\infty(G)$ denote the category of smooth $G$-representations.  Further, we denote by $\mathfrak{Rep}^{\textnormal{adm}}(G)$ (resp. $\mathfrak{Rep}^{\textnormal{ladm}}(G)$) the full subcategory of $\mathfrak{Rep}^\infty(G)$ consisting of (locally) admissible representations.  All three categories are abelian, and the categories $\mathfrak{Rep}^\infty(G)$ and $\mathfrak{Rep}^{\textnormal{ladm}}(G)$ have enough injectives (see \cite[Lem. 2.2.6, Props. 2.2.13, 2.2.18]{emerton:ordI} and \cite[Prop. 2.1.1]{emerton:ordII}).  Finally, for $c_1, \ldots, c_s \in C^\times$, we let $\mathfrak{Rep}^{\textnormal{adm}}_{z_i = c_i}(G)$ denote the abelian subcategory of $\mathfrak{Rep}^{\textnormal{adm}}(G)$ consisting of representations on which the $z_i$ act by the scalars $c_i$.

\vspace{5pt}

\subsection{Hecke algebras}
\label{subsec:heckealgs}

We fix a chamber of the semisimple Bruhat--Tits building contained in the apartment corresponding to $\bS$, and let $I$ denote the corresponding Iwahori subgroup and $I_1$ its pro-$p$-radical.  (We make no assumptions about torsion-freeness of $I_1$, until indicated otherwise.)  We then define the pro-$p$-Iwahori--Hecke algebra $\cH$ to be the convolution algebra of $C$-valued, compactly supported, $I_1$-bi-invariant functions on $G$.  For $g\in G$, we let $\T_g$ denote the characteristic function of $I_1gI_1$.  The algebra $\cH$ has a distinguished set of generators (given by those $\T_g$ where $g$ is a lift of a generator of the extended pro-$p$ affine Weyl group of $G$) which satisfy braid relations and quadratic relations, but we shall not need this description.  (See \cite[\S 4]{vigneras:hecke1} for details.)

Let $\mathfrak{Mod-}\cH$ denote the category of right $\cH$-modules.  We will consider objects of $\mathfrak{Mod-}\cH$ coming from the following construction.  Given a smooth $G$-representation $\pi$, the space of $I_1$-invariants $\pi^{I_1}$ has a right action of $\cH$, recalled in \cite[pf. of Lem. 4.5]{olliviervigneras}.  We therefore obtain a functor 
\begin{eqnarray*}
\mathfrak{Rep}^{\infty}(G) & \longrightarrow & \mathfrak{Mod-}\cH \\
\pi & \longmapsto & \pi^{I_1}
\end{eqnarray*}  
Since $I_1$ is open in $G$, the functor $\cind_{I_1}^G$ of compact induction is exact, and therefore its right adjoint $\res^G_{I_1}: \mathfrak{Rep}^{\infty}(G) \longrightarrow \mathfrak{Rep}^{\infty}(I_1)$ preserves injectives.  Thus, we see that the derived functors of the above may be identified with the cohomology spaces $\textnormal{H}^i(I_1,\pi)$, equipped with a right action of the algebra $\cH$.  (We will make this abuse of notation going forward.)  Explicitly, the operator $\T_g$ acts on $v\in \textnormal{H}^i(I_1,\pi)$ by
$$v \cdot \T_g = \left( \cores^{I_1}_{I_1 \cap g^{-1}I_1g} \circ g^{-1}_* \circ \res^{I_1}_{I_1 \cap gI_1g^{-1}} \right)(v).$$

We shall also need a duality operation on Hecke modules.  Given a right $\cH$-module $\fm$, we set $\fm^\vee := \Hom_C(\fm, C)$, and define a right action of $\cH$ on $\fm^\vee$ by
$$(f\cdot \T_g)(m) = f(m\cdot \T_{g^{-1}}),$$
where $f \in \fm^\vee$ and $m \in \fm$.  The fact that this gives a well defined right action follows from \cite[Lemma 2.11]{vigneras:hecke5}.

\vspace{10pt}

\section{Ordinary parts}
\label{sec:ordparts}

\subsection{Preliminaries}

\subsubsection{Smooth representations}
\label{smooth reps}

Let $\bP = \bM \ltimes \bN$ denote a standard parabolic subgroup of $\bG$.  Recall that we have the smooth parabolic induction functor 
$$\Ind_P^G: \quad \mathfrak{Rep}^\infty(M) \longrightarrow \mathfrak{Rep}^\infty(G),$$
which is exact and fully faithful (\cite[Prop. 4.2, Thm. 5.3]{vigneras:rightadj}).  Using \cite[Prop. 4.1.7]{emerton:ordI}, we see that this functor restricts to exact functors
\begin{alignat*}{3}
\Ind_P^G: \quad & \mathfrak{Rep}^{\textnormal{ladm}}(M) ~ & \longrightarrow ~ & \mathfrak{Rep}^{\textnormal{ladm}}(G), \\
\Ind_P^G: \quad & \mathfrak{Rep}^{\textnormal{adm}}(M) ~ & \longrightarrow ~ & \mathfrak{Rep}^{\textnormal{adm}}(G).
\end{alignat*}
By \cite[Prop. 4.2]{vigneras:rightadj}, the first of the above functors admits a right adjoint
$$\cR^G_P: \quad \mathfrak{Rep}^\infty(G) \longrightarrow \mathfrak{Rep}^\infty(M).$$

On the other hand, Emerton in \cite{emerton:ordI} has defined the functors of ordinary parts 
\begin{alignat*}{3}
\ord_{P^-}^G: \quad & \mathfrak{Rep}^{\textnormal{ladm}}(G) ~ & \longrightarrow ~ & \mathfrak{Rep}^{\textnormal{ladm}}(M), \\
\ord_{P^-}^G: \quad & \mathfrak{Rep}^{\textnormal{adm}}(G) ~ & \longrightarrow ~ & \mathfrak{Rep}^{\textnormal{adm}}(M),
\end{alignat*}
which are right adjoint to $\Ind_P^G$.  We recall the construction of $\ord_{P^-}^G(\pi)$ for $\pi \in \mathfrak{Rep}^{\textnormal{ladm}}(G)$.  Let $M^- \subset M$ denote the submonoid of elements $m$ such that $mN_0^- m^{-1} \subset N_0^-$, where $N_0^- := I \cap N^-$.  We let $Z_M$ denote the center of $M$, set $Z_M^- := M^- \cap Z_M$, and let $z \in Z_M^-$ denote an element such that $Z_M$ is generated (as a monoid) by $Z_M^-$ and $z^{-1}$.  As vector spaces, we then have
$$\ord_{P^-}^G(\pi) = C[z^{\pm 1}] \otimes_{C[z]} \pi^{N_0^-},$$
where $z$ acts on $\pi^{N^-_0}$ via the Hecke action 
$$\pi^{N_0^-}  \stackrel{z\cdot}{\longrightarrow} \pi^{zN_0^-z^{-1}} \xrightarrow{\cores_{zN_0^-z^{-1}}^{N_0^-}} \pi^{N_0^-}$$
(cf. \cite[Def. 3.1.3]{emerton:ordI}).  Given $m \in M$, we may write $m = z^{i_m}m^-$ for some $i_m \in \bbZ$ and $m^- \in M^-$.  The action of $m$ on $z^i \otimes v \in \ord_{P^-}^G(\pi)$ is then given by
$$m.(z^i \otimes v) = z^{i + i_m} \otimes m^-\cdot v,$$
where we use the Hecke action in the second tensor factor.  Up to isomorphism, the $M$-representation $\ord_{P^-}^G(\pi)$ is independent of the choice of compact open subgroup $N_0^-$ of $N^-$ (\cite[Prop. 3.1.12]{emerton:ordI}).

The functor $\ord_{P^-}^G: \mathfrak{Rep}^{\textnormal{adm}}(G)  \longrightarrow  \mathfrak{Rep}^{\textnormal{adm}}(M)$ is right adjoint to the admissible parabolic induction functor $\Ind_P^G: \mathfrak{Rep}^{\textnormal{adm}}(M)  \longrightarrow  \mathfrak{Rep}^{\textnormal{adm}}(G)$.  Theorem 4.11 of \cite{AHV} shows that $\cR^G_P$ preserves admissibility, and is therefore also right adjoint to $\Ind_P^G: \mathfrak{Rep}^{\textnormal{adm}}(M)  \longrightarrow  \mathfrak{Rep}^{\textnormal{adm}}(G)$.  Consequently, we have $\ord_{P^-}^G(\pi) \cong \cR_P^G(\pi)$ for every $\pi \in \mathfrak{Rep}^{\textnormal{adm}}(G)$.

Replacing $\pi^{N_0^-}$ above with $\textnormal{H}^i(N_0^-,\pi)$ gives the definition of the $\delta$-functor $\textnormal{H}^i\ord_{P^-}^G:  \mathfrak{Rep}^{\textnormal{adm}}(G)  \longrightarrow  \mathfrak{Rep}^{\textnormal{adm}}(M)$ (and similarly with ``adm'' replaced by ``ladm''; see \cite[Def. 3.3.1, Lem. 3.2.1, Thm. 3.4.7]{emerton:ordII}).

\vspace{5pt}

\subsubsection{Hecke modules}
\label{hecke mods}

We have analogous functors on the side of Hecke modules.  By \cite[\S 2.4.1, 2.4.2]{olliviervigneras}, the group $I_{M,1} := I_1 \cap M$ is a pro-$p$-Iwahori subgroup of $M$, and therefore we may form the analogous pro-$p$-Iwahori--Hecke algebra $\cH_M$.  Its Hecke operators will be denoted $\T_m^M$ for $m \in M$.  Note that $\cH_M$ is not a subalgebra of $\cH$ in general.

Define the monoid
$$M^+ := \{m \in M: mN_0m^{-1} \subset N_0 ~~\textnormal{and}~~ m^{-1}N_0^-m \subset N_0^-\},$$  
where $N_0 := I \cap N$.  (Thus $(M^+)^{-1} \subset M^-$.) The set of functions in $\cH_M$ with support in $M^+$ forms a subalgebra, denoted $\cH_M^+$, and is called the positive subalgebra.  The algebra $\cH_M^+$ admits an embedding $\theta$ into $\cH$, given by sending $\T^M_m$ to $\T_m$ for $m \in M^+$.  Thus, given any $\cH_M$-module $\fn$, we may define an $\cH$-module by
$$\Ind_{\cH_M}^{\cH}(\fn) := \fn \otimes_{\cH_M^+,\theta} \cH.$$
The functor $\Ind_{\cH_M}^{\cH}: \mathfrak{Mod-}\cH_M \longrightarrow \mathfrak{Mod-}\cH$ is exact and fully faithful (\cite[Prop. 4.1]{vigneras:hecke5} and \cite[Lem. 5.2]{abe:inductions}).

The right adjoint of $\Ind_{\cH_M}^{\cH}$ is the functor $\cR^{\cH}_{\cH_M}: \mathfrak{Mod-}\cH \longrightarrow \mathfrak{Mod-}\cH_M$ given by
$$\cR_{\cH_M}^{\cH}(\fm) = \Hom_{\cH_M^+,\theta}(\cH_M,\fm)$$
for $\fm \in \mathfrak{Mod-}\cH$.  The right action of $\cH_M$ on the above space is the evident one, and we view $\fm$ as an $\cH_M^+$-module via the embedding $\theta: \cH_M^+ \longhookrightarrow \cH$ above.

We utilize another description of $\cR_{\cH_M}^{\cH}$, which will be useful in the derived context.  By \cite[Thm. 1.4(ii)]{vigneras:hecke5}, the algebra $\cH_M$ is the localization of $\cH_M^+$ at $\T^M_{z^{-1}}$ (with $z$ as in the definition of $\ord^G_{P^-}$).  Therefore, we have an isomorphism of $C$-vector spaces
\begin{eqnarray*}
 \Hom_{\cH_M^+,\theta}(\cH_M,\fm) & \stackrel{\sim}{\longrightarrow} & \varprojlim_{v\mapsto v\cdot\T_{z^{-1}}}\fm\\
f & \longmapsto & \left(f((\T_{z^{-1}}^M)^{-n})\right)_{n\geq0}
\end{eqnarray*}
Let $\cO: \mathfrak{Mod-}\cH_M \longrightarrow C\mathfrak{-Vec}$ denote the forgetful functor.  Since $\cO$ is exact, the Grothendieck spectral sequence associated to the composition $\cO \circ \cR_{\cH_M}^{\cH} \simeq \varprojlim_{v\mapsto v\cdot\T_{z^{-1}}}$ collapses to give 
$$\cO \circ \textnormal{R}^i\cR_{\cH_M}^{\cH} \simeq \sideset{}{^i}\varprojlim_{v\mapsto v\cdot\T_{z^{-1}}}.$$
In particular, if $\fm$ is finite-dimensional over $C$, then the tower $(\fm)_{n\geq 0}$ satisfies the Mittag-Leffler condition.  Thus, we obtain $\varprojlim^i_{v\mapsto v\cdot\T_{z^{-1}}}\fm = 0$ for all $i > 0$, and consequently $\textnormal{R}^i\cR_{\cH_M}^{\cH}(\fm) = 0$ for all $i > 0$.

\vspace{5pt}

\subsubsection{Comparison}

The right adjoint functors $\ord^G_{P^-}$, $\cR^G_P$ and $\cR^{\cH}_{\cH_M}$ are related as follows.  By \cite[\S 4.3.3, Question 5]{olliviervigneras} the diagram
\begin{equation}
\label{smoothcommdiag}
\begin{tikzcd}[column sep = large, row sep = large]
\mathfrak{Rep}^\infty(G) \ar[r, "\cR^G_P"] \ar[d, "(-)^{I_1}"'] & \mathfrak{Rep}^\infty(M) \ar[d, "(-)^{I_{M,1}}"] \\
\mathfrak{Mod-}\cH \ar[r, "\cR_{\cH_M}^{\cH}"] & \mathfrak{Mod-}\cH_M
\end{tikzcd}
\end{equation}
commutes up to natural equivalence.  Similarly, since $\ord^G_{P^-} \simeq \cR^G_{P}$ on the category $\mathfrak{Rep}^{\textnormal{adm}}(G)$, we also have the following commutative diagram
\begin{equation}
\label{admcommdiag}
\begin{tikzcd}[column sep = large, row sep = large]
\mathfrak{Rep}^{\textnormal{adm}}(G) \ar[r, "\ord^G_{P^-}"] \ar[d, "(-)^{I_1}"'] & \mathfrak{Rep}^{\textnormal{adm}}(M) \ar[d, "(-)^{I_{M,1}}"] \\
\mathfrak{Mod-}\cH \ar[r, "\cR_{\cH_M}^{\cH}"] & \mathfrak{Mod-}\cH_M
\end{tikzcd}
\end{equation}
Our goal will be to compute the derived versions of the above diagrams.  Note that some care must be taken here, as the category $\mathfrak{Rep}^{\textnormal{adm}}(G)$ does not have enough injectives.

\vspace{5pt}

\subsection{The admissible case}

We begin by exploring the case of the ordinary parts functor.  In what follows, our arguments will rely on the following three conjectures.

\begin{conj}
\label{conjA}
Let $c_1, \ldots, c_s \in C^\times$, and let $\pi \in \mathfrak{Rep}^{\textnormal{adm}}_{z_i = c_i}(G)$.  Then there exists $A \in \mathfrak{Rep}^{\textnormal{adm}}_{z_i = c_i}(G)$ and a $G$-equivariant injection $\pi \longhookrightarrow A$, such that $A|_I$ is injective in $\mathfrak{Rep}^\infty(I)$.  
\end{conj}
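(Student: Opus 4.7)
The plan is to produce $A$ as the Pontryagin dual of a finitely generated ``free'' module over a completed group algebra that simultaneously encodes the $G$-action and the prescribed central character.

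First, I would reformulate the problem dually. Setting $\Lambda := \iw{I_{1}}$, the functor $\pi \longmapsto \pi^{\vee} := \Hom_{C}(\pi, C)$ is an anti-equivalence between $\mathfrak{Rep}^{\infty}(I_{1})$ and the category of pseudocompact $\Lambda$-modules, under which admissibility of $\pi|_{I_{1}}$ corresponds to finite generation of $\pi^{\vee}$. Any surjection $\Lambda^{n} \longtwoheadrightarrow \pi^{\vee}$ then dualizes to an injection of smooth $I_{1}$-representations $\pi \longhookrightarrow \cC^{\infty}(I_{1},C)^{n}$, with the target injective in $\mathfrak{Rep}^{\infty}(I_{1})$ (being Pontryagin dual to a free $\Lambda$-module); coinducing up to $I$ yields an $I$-equivariant embedding of $\pi$ into an injective object of $\mathfrak{Rep}^{\infty}(I)$.

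The heart of the argument is to lift this to a $G$-equivariant embedding while preserving both admissibility and the scalars $c_{i}$. The natural framework is to replace $\Lambda$ by a completed convolution ring $\Lambda_{G}$ attached to $G$ relative to $I_{1}$, quotiented by the relations $z_{i} - c_{i}$. A representation in $\mathfrak{Rep}^{\textnormal{adm}}_{z_{i} = c_{i}}(G)$ dualizes to a finitely generated module over this quotient; assuming this category carries enough ``free'' objects, one picks a surjection $F \longtwoheadrightarrow \pi^{\vee}$ from a finitely generated free module and sets $A := F^{\vee}$, obtaining an injection $\pi \longhookrightarrow A$ in $\mathfrak{Rep}^{\textnormal{adm}}_{z_{i} = c_{i}}(G)$ by construction. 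Injectivity of $A|_{I}$ then reduces to the fact that restricting $F$ to $\Lambda$ yields a (topologically) free $\Lambda$-module, whence $A|_{I_{1}}$ becomes a product of copies of $\cC^{\infty}(I_{1},C)$, with injectivity in $\mathfrak{Rep}^{\infty}(I)$ following after coinduction along $I_{1} \subset I$.

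The main obstacle lies in this second step: establishing the existence of ``free'' objects surjecting onto $\pi^{\vee}$ within the admissible category with fixed central character. For a general reductive $G$, it is not known that the blocks of $\mathfrak{Rep}^{\textnormal{ladm}}_{z_{i} = c_{i}}(G)$ possess admissible injective covers compatible with the $I_{1}$-structure, which is precisely why Conjecture \ref{conjA} must be stated conditionally. In the cases dealt with in the paper --- notably when the simply connected cover of the derived subgroup of $\bG$ is $\bS\bL_{2/F}$ --- one instead appeals to Pa\v{s}k\={u}nas's explicit block-by-block construction of admissible injective envelopes in $\mathfrak{Rep}^{\textnormal{ladm}}_{z_{i} = c_{i}}(G)$, together with direct control of the underlying pseudocompact modules, to supply the required cofree cover and verify the claimed $I$-injectivity.
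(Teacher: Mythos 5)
First, note that the statement under consideration is a \emph{conjecture}; the paper establishes it only when $\bG$ has semisimple $F$-rank $0$ or $1$ (Theorem~\ref{emerton-conjthm}), so that partial result is what one can compare against.

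Your proposed strategy has a genuine gap at the crucial step. You pass to $\Lambda(G) = C[G]\otimes_{C[K]}\Lambda(K)$, quotient by $(z_i - c_i)$, choose a surjection from a finitely generated free module $F$ onto $\pi^\vee$, and set $A := F^\vee$. But this $A$ is essentially never admissible. The ring $\Lambda(G)/(z_i - c_i)$ is free of \emph{infinite} rank as a $\Lambda(K)$-module whenever $G$ is noncompact modulo its center (already for $\textnormal{GL}_2(\bbQ_p)$), since a $\Lambda(K)$-generating set is indexed by representatives for $K\backslash G / \langle z_1,\ldots,z_s\rangle$. Consequently the $I_1$-invariants of $A = F^\vee$ are infinite-dimensional and $A \notin \mathfrak{Rep}^{\textnormal{adm}}_{z_i=c_i}(G)$. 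The underlying confusion is between two finiteness conditions: admissibility corresponds to finite generation of $\pi^\vee$ over $\Lambda(I_1)$, which is far stronger than finite generation over $\Lambda(G)/(z_i - c_i)$, and there is no supply of ``free'' objects over the latter ring whose Pontryagin duals land in $\mathfrak{Rep}^{\textnormal{adm}}_{z_i=c_i}(G)$. So the reduction in your second paragraph cannot be made to work as stated, even in the cases the paper treats.

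The paper's proof of the known cases is entirely different and does not pass through $\Lambda(G)$ at all. It works upward along the chain $G_\aff \subset H := \langle z_1,\ldots,z_s\rangle G_\aff \subset G$. For semisimple $F$-rank $0$, $G_\aff$ is profinite; one takes a genuine injective envelope $\inj_{G_\aff}(\pi)$ in $\mathfrak{Rep}^\infty(G_\aff)$ and proves its admissibility by reducing to the (finite-dimensional) socle of $\pi$ and invoking Pa\v{s}k\={u}nas's lemmas on admissibility of injective envelopes of irreducibles. For semisimple $F$-rank $1$, the argument uses coefficient systems on the Bruhat--Tits tree $\sB$: one constructs a diagram $D$ of compatible injective envelopes at the vertex and edge stabilizers and sets $A := \textnormal{H}_0(\sB,D)$, again using Pa\v{s}k\={u}nas's diagram-theoretic lemmas to control admissibility and injectivity of $A|_I$. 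Finally, your closing paragraph misattributes the hypothesis: the paper proves Conjecture~\ref{conjA} when $\bG$ has semisimple $F$-rank $0$ or $1$, whereas the condition $\bG^{\textnormal{sc}}\cong\bS\bL_{2/F}$ is the hypothesis for Conjecture~\ref{conjB} (Theorem~\ref{sl2effthm}), a distinct statement.
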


\begin{conj}[cf. \cite{emerton:ordII}, Conjecture 3.7.2]
\label{conjB}
The functors $\textnormal{H}^i\ord^G_{P^-}$ are effaceable on the category $\mathfrak{Rep}^{\textnormal{ladm}}(G)$ for $i > 0$.  Consequently, we have $\textnormal{H}^i\ord^G_{P^-} \simeq \textnormal{R}^i\ord^G_{P^-}$ for $i > 0$, where $\textnormal{R}^i\ord^G_{P^-}$ is computed in the category $\mathfrak{Rep}^{\textnormal{ladm}}(G)$.  
\end{conj}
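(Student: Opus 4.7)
The second sentence is a formal consequence of the first: on the category $\mathfrak{Rep}^{\textnormal{ladm}}(G)$ (which has enough injectives by \cite[Prop.~2.1.1]{emerton:ordII}), an effaceable cohomological $\delta$-functor is canonically isomorphic to $\textnormal{R}^\bullet$ of its zeroth term, by Grothendieck's universality criterion. So the task reduces to proving effaceability of $\textnormal{H}^i\ord^G_{P^-}$ on $\mathfrak{Rep}^{\textnormal{ladm}}(G)$ for $i>0$, granting Conjecture~\ref{conjA}.

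\textbf{Key observation.} Suppose $\pi \hookrightarrow A$ is an embedding in $\mathfrak{Rep}^{\textnormal{ladm}}(G)$ with $A|_I$ injective in $\mathfrak{Rep}^\infty(I)$. Since $N_0^- \subseteq I$ is open, $\cind_{N_0^-}^I$ is an exact left adjoint to $\res^I_{N_0^-}$, so $A|_{N_0^-}$ is injective in $\mathfrak{Rep}^\infty(N_0^-)$, whence $\textnormal{H}^i(N_0^-, A) = 0$ for all $i>0$. From the definition $\textnormal{H}^i\ord^G_{P^-}(A) = C[z^{\pm 1}] \otimes_{C[z]} \textnormal{H}^i(N_0^-, A)$, it follows that $\textnormal{H}^i\ord^G_{P^-}(A) = 0$ for $i>0$. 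So effaceability reduces to producing, for every $\pi \in \mathfrak{Rep}^{\textnormal{ladm}}(G)$, an embedding of this form.

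\textbf{D\'evissage.} The plan is to reduce to the hypotheses of Conjecture~\ref{conjA}. Because local admissibility forces the central elements $z_1,\ldots,z_s$ to act locally finitely, the category $\mathfrak{Rep}^{\textnormal{ladm}}(G)$ splits (possibly after enlarging $C$) into blocks indexed by tuples of generalized eigenvalues $(c_1,\ldots,c_s)$. On each such block, filter $\pi = \bigcup_k \pi[k]$ by $\pi[k] := \bigcap_i \ker(z_i-c_i)^k$, so that the graded pieces lie in $\mathfrak{Rep}^{\textnormal{ladm}}_{z_i = c_i}(G)$; further exhaust each $\pi[k]$ by admissible subobjects. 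Conjecture~\ref{conjA} then produces an $I$-injective embedding of each admissible subobject with strict central character. The remaining step is to assemble these embeddings --- by a transfinite Baer-type argument along the admissibility filtration and the central-character filtration --- into a single embedding $\pi \hookrightarrow A$ with $A|_I$ injective in $\mathfrak{Rep}^\infty(I)$.

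\textbf{Main obstacle.} The hardest step is the assembly: one needs the class of $A \in \mathfrak{Rep}^{\textnormal{ladm}}(G)$ with $A|_I$ injective to be closed under the filtered colimits and short exact extensions appearing in the d\'evissage. Informally, this amounts to showing that restriction $\mathfrak{Rep}^{\textnormal{ladm}}(G) \to \mathfrak{Rep}^\infty(I)$ sends sufficiently many objects to injectives; this is exactly what is at stake in Conjecture~\ref{conjA}, and what cannot at present be accessed for arbitrary $\bG$. It is precisely for this reason that the conjectures are verified in Appendix~\ref{app} only for groups whose injective cogenerators are sufficiently explicit (e.g.\ those addressed by Pa\v{s}k\={u}nas), and why the higher-rank case appears to require genuinely new input.
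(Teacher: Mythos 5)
The statement you are attempting to prove is a \emph{Conjecture} in the paper, not a theorem: the paper does not prove it for general reductive $\bG$, and verifies it only under the hypothesis $\bG^{\textnormal{sc}} \cong \bS\bL_{2/F}$ via Theorem~\ref{sl2effthm} in Appendix~\ref{app}. Your ``key observation'' --- if $A|_I$ is injective in $\mathfrak{Rep}^\infty(I)$ then $A|_{N_0^-}$ is injective and hence $\textnormal{H}^i\ord^G_{P^-}(A) = 0$ for $i > 0$ --- is correct and is exactly the mechanism the paper relies on. But the implication you try to exploit is ``Conjecture~\ref{conjA} $\Rightarrow$ Conjecture~\ref{conjB},'' and that is \emph{not} the implication the paper records. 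Immediately after Conjecture~\ref{conjC} the paper notes that ``Conjecture~\ref{conjC} for $G$ implies Conjecture~\ref{conjB} for $G$'': once one knows injective objects of $\mathfrak{Rep}^{\textnormal{ladm}}(G)$ restrict to injective $I$-representations, one simply resolves $\pi$ injectively \emph{in} $\mathfrak{Rep}^{\textnormal{ladm}}(G)$, and the very same resolution computes $\textnormal{R}^\bullet\ord^G_{P^-}$ and effaces the higher $\textnormal{H}^i\ord^G_{P^-}$ --- no d\'evissage from the admissible-with-central-character setting is required. Your route, by contrast, lands you squarely in the assembly problem you yourself flag, and there is a genuine gap there: Conjecture~\ref{conjA} is stated for exact central characters, so the passage from exact to generalized central characters is not covered by it, and even with compatible embeddings on admissible pieces there is no reason a filtered colimit of $I$-injective cogenerators along arbitrary transition maps remains $I$-injective without additional input.

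It is also worth noting that the paper's actual verification (Theorem~\ref{sl2effthm}) does \emph{not} go through either Conjecture~\ref{conjA} or Conjecture~\ref{conjC} as a black box. It proves the strictly weaker Conjecture~\ref{sl2eff} directly: an injective object of $\mathfrak{Rep}^{\textnormal{ladm}}(G)$ restricts to an injective object of $\mathfrak{Rep}^{\infty}(N_0^-)$. The base case is the Emerton--Pa\v{s}k\={u}nas result that an injective locally admissible $\textnormal{SL}_2(F)$-representation restricts to an injective $\textnormal{SL}_2(\cO_F)$-representation; the remaining steps propagate this through the isogeny $\bG^{\textnormal{sc}} \to \bG^{\textnormal{der}}$, and then through the center $\widetilde{\cZ}_0$, $\cZ$, and a finite-index passage to $G$. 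So your d\'evissage idea is closer in spirit to how the paper proves Conjecture~\ref{conjA} itself (Theorem~\ref{emerton-conjthm}) than to how it accesses Conjecture~\ref{conjB}; you have correctly identified that the missing ingredient is some form of $I$-injectivity of injectives in $\mathfrak{Rep}^{\textnormal{ladm}}(G)$, but the efficient way to use it is via the Conjecture~\ref{conjC}-type statement rather than reassembling Conjecture~\ref{conjA} outputs.
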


\begin{conj}
\label{conjC}
Suppose $\pi \in \mathfrak{Rep}^{\textnormal{ladm}}(G)$ is an injective object.  Then the restriction $\pi|_I \in \mathfrak{Rep}^{\infty}(I)$ is also injective.  
\end{conj}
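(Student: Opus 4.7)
The plan is to deduce Conjecture \ref{conjC} from Conjecture \ref{conjA} via a splitting argument, once we have reduced an arbitrary injective $\pi \in \mathfrak{Rep}^{\textnormal{ladm}}(G)$ to a product of admissible injective envelopes with a fixed central character. The key point is that injectivity in $\mathfrak{Rep}^{\textnormal{ladm}}(G)$ allows any embedding to split, so any embedding into an object with injective $I$-restriction transports injectivity under $\res^G_I$.

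The first step is to decompose $\pi$ along the center of $G$. Since $\widetilde{\cZ}_0$ acts smoothly on $\pi$, each finitely generated subrepresentation breaks into $\widetilde{\cZ}_0$-isotypic pieces, while the free generators $z_1,\ldots,z_s$ act through a finitely generated commutative algebra on each admissible subobject; passing to generalized eigenspaces produces a block decomposition of $\mathfrak{Rep}^{\textnormal{ladm}}(G)$ indexed by central characters, and $\pi$ decomposes accordingly as a product. In each block $\mathfrak{Rep}^{\textnormal{ladm}}_{z_i = c_i}(G)$, the Pa\v{s}k\={u}nas-style classification results available in Appendix \ref{app} identify the injective objects as products of admissible injective envelopes of the simple subquotients. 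It therefore suffices to treat a single admissible injective envelope $J \in \mathfrak{Rep}^{\textnormal{adm}}_{z_i = c_i}(G)$. For such $J$, Conjecture \ref{conjA} yields an embedding $J \longhookrightarrow A$ into an admissible $A$ with the same central character such that $A|_I$ is injective in $\mathfrak{Rep}^\infty(I)$. Since $J$ is injective in $\mathfrak{Rep}^{\textnormal{ladm}}(G)$ and the embedding takes place inside this category, it splits, so $J|_I$ is a direct summand of the injective object $A|_I$, and hence injective. Reassembling via arbitrary products, which preserve injectivity in the Grothendieck category $\mathfrak{Rep}^\infty(I)$, then gives injectivity of $\pi|_I$.

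The main obstacle sits entirely in the two reduction steps, not in the splitting argument itself. A clean block decomposition of $\mathfrak{Rep}^{\textnormal{ladm}}(G)$ by central characters, and more seriously the admissibility of the injective envelopes of simple ladm representations, are precisely the Pa\v{s}k\={u}nas-type structural results that are currently available only for a few low-rank groups such as $\textnormal{GL}_2(\bbQ_p)$ and $\textnormal{SL}_2(\bbQ_p)$. Establishing these inputs for a general reductive $G$ is the fundamental difficulty, and is the reason the statement appears as a conjecture in the main text and is verified in the appendix only for a restricted list of groups.
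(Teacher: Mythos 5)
Your strategy — deduce Conjecture \ref{conjC} from Conjecture \ref{conjA} by splitting — has a genuine gap, and it is precisely the gap that forces the paper to prove Conjecture \ref{conjC} only for semisimple $F$-rank $0$ even though Conjecture \ref{conjA} is established up to rank $1$.

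The key unsupported assertion is that ``the Pa\v{s}k\={u}nas-style classification results available in Appendix \ref{app} identify the injective objects as products of admissible injective envelopes of the simple subquotients.'' No such statement appears in the appendix, and for a non-compact group it is false. The injective envelope of an irreducible smooth $G$-representation inside $\mathfrak{Rep}^{\textnormal{ladm}}_{z_i=c_i}(G)$ is, for a group such as $\mathrm{GL}_2(\bbQ_p)$, not admissible (this is exactly why Pa\v{s}k\={u}nas works with pseudocompact projective envelopes over a deformation ring rather than with finite-length objects). Consequently you cannot reduce to an admissible object $J$ on which Conjecture \ref{conjA} can be invoked, and the splitting argument has nothing to grab onto. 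The block-decomposition step has a related issue: generalized-eigenspace decompositions along the free part of the center (as in step \ref{eff-4} of the appendix) do not sort $\pi$ into the categories $\mathfrak{Rep}^{\textnormal{ladm}}_{z_i=c_i}(G)$, and arbitrary products of locally admissible representations need not remain locally admissible, so even the ``reassembly'' step is not automatic.

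The paper's actual proof of the rank-$0$ case (Theorem \ref{rk0-thm}, step \ref{rk0-1}) takes a different route that avoids these pitfalls. It writes $\pi$ as a filtered colimit of its admissible subrepresentations $\tau$, embeds each $\tau$ into its injective envelope $A_\tau = \inj_{G_\aff}(\tau)$, and crucially uses (i) that this envelope is \emph{admissible} because $G_\aff$ is compact when the rank is $0$, and (ii) that the embedding $\tau \hookrightarrow A_\tau$ is \emph{essential}, so the extension $A_\tau \to \pi$ furnished by injectivity of $\pi$ is automatically injective. One then realizes $\pi$ as a filtered colimit of injective admissible subobjects and applies closure of injectives under filtered colimits in $\mathfrak{Rep}^\infty(G_\aff)$. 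Conjecture \ref{conjA} does not give essentiality, and in higher rank the envelopes are not admissible, which is why the compact-group argument does not extend; your proposal, which would prove Conjecture \ref{conjC} whenever Conjecture \ref{conjA} holds, over-promises relative to what is actually known. The one sound ingredient in your proposal is that if one had an embedding of $\pi$ itself (not just admissible pieces) into an object with injective $I$-restriction, the splitting argument would finish; but producing such an embedding for a general injective locally admissible $\pi$ is exactly the open problem.
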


We note that Conjecture \ref{conjC} for $G$ implies Conjecture \ref{conjB} for $G$ and any standard parabolic $P$.  In Appendix \ref{app}, we provide proofs of these conjectures in several cases: (1) we prove Conjecture \ref{conjA} when the semisimple $F$-rank of $\bG$ is 0 or 1; (2) we prove Conjecture \ref{conjB} when the simply connected cover of the derived subgroup of $\bG$ is $\bS\bL_{2/F}$; (3) we prove Conjecture \ref{conjC} when the semisimple $F$-rank of $\bG$ is 0.  In particular, when the simply connected cover of the derived subgroup of $\bG$ is $\bS\bL_{2/F}$, Conjectures \ref{conjA} and \ref{conjB} hold, and Conjecture \ref{conjC} holds for $\bZ$, the Levi factor of the minimal $F$-parabolic subgroup of $\bG$.

\vspace{5pt}

\subsubsection{}\label{derivedadm}

We first examine the derived functors of the composition $\textnormal{H}^0(I_{M,1},-) \circ \ord_{P^-}^G$.  By composing with the fully faithful inclusion $\mathfrak{Rep}^{\textnormal{adm}}(M) \longhookrightarrow \mathfrak{Rep}^{\textnormal{ladm}}(M)$, we consider $\ord_{P^-}^G$ as a functor $\mathfrak{Rep}^{\textnormal{adm}}_{z_i = c_i}(G) \longrightarrow \mathfrak{Rep}^{\textnormal{ladm}}(M)$, and the commutative diagram \eqref{admcommdiag} becomes
\begin{equation}
\label{admcommdiag2}
\begin{tikzcd}[column sep = large, row sep = large]
\mathfrak{Rep}^{\textnormal{adm}}_{z_i = c_i}(G) \ar[r, "\ord^G_{P^-}"] \ar[d, "(-)^{I_1}"'] & \mathfrak{Rep}^{\textnormal{ladm}}(M) \ar[d, "(-)^{I_{M,1}}"] \\
\mathfrak{Mod-}\cH \ar[r, "\cR_{\cH_M}^{\cH}"] & \mathfrak{Mod-}\cH_M
\end{tikzcd}
\end{equation}

Let $\fA$ denote the full subcategory of $\mathfrak{Rep}^{\textnormal{adm}}_{z_i = c_i}(G)$ consisting of those representations $A$ such that $A|_I$ is injective in $\mathfrak{Rep}^\infty(I)$.  We make the following observations:
\begin{itemize}
\item Assume Conjecture \ref{conjA} is true for $G$.  Then $\fA$ is cogenerating in $\mathfrak{Rep}^{\textnormal{adm}}_{z_i = c_i}(G)$, in the terminology of \cite[Def. 8.3.21(v)]{kashiwaraschapira}.  Therefore, point (i) of Corollary 13.3.8 of \emph{op. cit.} is satisfied.
\item Consider a short exact sequence
$$0 \longrightarrow A' \longrightarrow A \longrightarrow A'' \longrightarrow 0$$
in $\mathfrak{Rep}^{\textnormal{adm}}_{z_i = c_i}(G)$, with $A, A' \in \fA$.  On restriction to $I$, the injectivity of $A|_I$ and $A'|_I$ implies the injectivity of $A''|_I$.  Therefore, $A'' \in \fA$ and point (ii) of Corollary 13.3.8 of \emph{op. cit.} is satisfied.
\item For $A'\in \fA$, the restriction $A'|_I$ is injective, and by definition of $\textnormal{H}^1\ord_{P^-}^G$ and \cite[Prop. 2.1.11]{emerton:ordII} we get $\textnormal{H}^1\ord_{P^-}^G(A') = 0$.  Therefore, given any short exact sequence
$$0 \longrightarrow A' \longrightarrow A \longrightarrow A'' \longrightarrow 0$$
in $\mathfrak{Rep}^{\textnormal{adm}}_{z_i = c_i}(G)$, with $A, A' \in \fA$, we get a short exact sequence
$$0 \longrightarrow \ord_{P^-}^G(A') \longrightarrow \ord_{P^-}^G(A) \longrightarrow \ord_{P^-}^G(A'') \longrightarrow 0.$$
Thus, point (iii) of \cite[Cor. 13.3.8]{kashiwaraschapira} is satisfied (for the functor $\ord_{P^-}^G$).
\end{itemize}
Combining these facts, \cite[Cor. 13.3.8]{kashiwaraschapira} implies that $\fA$ is $\ord_{P^-}^G$-injective (cf. \emph{op. cit.}, Definition 13.3.4).  In particular, by Proposition 13.3.5(i) of \emph{op. cit.}, we have the existence of the total derived functor
$$\textnormal{R}\ord_{P^-}^G: D^+\big(\mathfrak{Rep}^{\textnormal{adm}}_{z_i = c_i}(G)\big) \longrightarrow D^+\big(\mathfrak{Rep}^{\textnormal{ladm}}(M)\big).$$
Precisely, for $\pi \in \mathfrak{Rep}^{\textnormal{adm}}_{z_i = c_i}(G)$, we have $\textnormal{R}\ord_{P^-}^G(\pi) \cong \ord_{P^-}^G(A^\bullet)$, where 
$$0 \longrightarrow \pi \longrightarrow A^0 \longrightarrow A^1 \longrightarrow \ldots$$
is any resolution with $A^i \in \fA$.

\begin{lemma}
\label{comp-ord}
Suppose Conjectures \ref{conjA} and \ref{conjB} are true for $G$, and let $\pi \in \mathfrak{Rep}^{\textnormal{adm}}_{z_i = c_i}(G)$.  Then we have
$$h^i\big(\textnormal{R}\ord_{P^-}^G(\pi)\big) \cong \textnormal{R}^i\ord_{P^-}^G(\pi),$$
where $\textnormal{R}^i\ord_{P^-}^G(\pi)$ is calculated in the category $\mathfrak{Rep}^{\textnormal{ladm}}(G)$.  
\end{lemma}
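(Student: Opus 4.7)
The strategy is to relate both sides of the isomorphism to Emerton's $\delta$-functor $\textnormal{H}^i\ord^G_{P^-}$ and then invoke Conjecture \ref{conjB}. More precisely, I would show that on $\mathfrak{Rep}^{\textnormal{adm}}_{z_i = c_i}(G)$ the two families $(h^i(\textnormal{R}\ord^G_{P^-}(-)))_{i \geq 0}$ and $(\textnormal{H}^i\ord^G_{P^-})_{i \geq 0}$ are isomorphic as cohomological $\delta$-functors via a universality argument, and then transfer to $\textnormal{R}^i\ord^G_{P^-}$ computed in $\mathfrak{Rep}^{\textnormal{ladm}}(G)$ using Conjecture \ref{conjB}.

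The first step is to verify that both families vanish on the class $\fA$ in positive degrees. For any $A \in \fA$, the object $A$ is its own $\fA$-resolution (concentrated in degree zero), so $\textnormal{R}\ord^G_{P^-}(A) \simeq \ord^G_{P^-}(A)$ and hence $h^i(\textnormal{R}\ord^G_{P^-}(A)) = 0$ for $i > 0$. For Emerton's functors, the injectivity of $A|_I$ in $\mathfrak{Rep}^\infty(I)$ descends to injectivity of $A|_{N_0^-}$ in $\mathfrak{Rep}^\infty(N_0^-)$ (restriction along a closed subgroup inclusion of profinite groups preserves injectives, as its left adjoint is exact), so $\textnormal{H}^i(N_0^-, A) = 0$ for $i > 0$; the explicit description of $\textnormal{H}^i\ord^G_{P^-}$ recalled in Subsubsection \ref{smooth reps} then gives $\textnormal{H}^i\ord^G_{P^-}(A) = 0$ for $i > 0$.

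Next, both families carry compatible $\delta$-functor structures on $\mathfrak{Rep}^{\textnormal{adm}}_{z_i = c_i}(G)$: the Emerton side by construction in \cite{emerton:ordII}, and the derived side inherited from the total right derived functor constructed in Subsubsection \ref{derivedadm}. Under Conjecture \ref{conjA}, the class $\fA$ is cogenerating in $\mathfrak{Rep}^{\textnormal{adm}}_{z_i = c_i}(G)$, and by the vanishing above it effaces both $\delta$-functors in positive degrees; consequently both are universal. Since they agree in degree zero (both equal to $\ord^G_{P^-}$), they are naturally isomorphic as $\delta$-functors. Finally, Conjecture \ref{conjB} gives a natural isomorphism $\textnormal{H}^i\ord^G_{P^-} \simeq \textnormal{R}^i\ord^G_{P^-}$ on $\mathfrak{Rep}^{\textnormal{ladm}}(G)$, and restricting to $\mathfrak{Rep}^{\textnormal{adm}}_{z_i = c_i}(G)$ then composing produces the desired $h^i(\textnormal{R}\ord^G_{P^-}(\pi)) \cong \textnormal{R}^i\ord^G_{P^-}(\pi)$.

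The main technical point requiring care is the $\delta$-functor structure on $(h^i(\textnormal{R}\ord^G_{P^-}(-)))_{i \geq 0}$: for any short exact sequence in $\mathfrak{Rep}^{\textnormal{adm}}_{z_i = c_i}(G)$, one must produce compatible $\fA$-resolutions (possible since $\fA$ is cogenerating and stable under the relevant quotients, as observed in Subsubsection \ref{derivedadm}) so that applying $\ord^G_{P^-}$ yields a distinguished triangle in $D^+(\mathfrak{Rep}^{\textnormal{ladm}}(M))$, and hence a long exact cohomology sequence with connecting maps functorial in the original sequence. This is standard but non-trivial because the target category $\mathfrak{Rep}^{\textnormal{ladm}}(M)$ strictly contains the ambient admissible category, so one must check compatibility of the two $\delta$-structures in this enlarged setting before comparing with $\textnormal{H}^i\ord^G_{P^-}$.
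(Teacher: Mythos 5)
Your proof is correct but follows a genuinely different route from the paper. The paper's proof works entirely at the level of total derived functors via Kashiwara--Schapira's machinery: it exhibits $\textnormal{R}\ord_{P^-}^G \simeq \textnormal{R}\ord_{P^-}^{G,\textnormal{ladm}} \circ \iota_G$ by showing that $\iota_G$ is exact and carries $\fA$ into the class $\fC$ of $\ord_{P^-}^{G,\textnormal{ladm}}$-acyclic objects, then applies Prop.~13.3.13(ii) of \cite{kashiwaraschapira} and takes cohomology. Conjecture \ref{conjB} enters there to verify that $\iota_G(\fA) \subset \fC$: one knows $\textnormal{H}^i\ord_{P^-}^G$ vanishes on $\fA$ directly, but one needs Conjecture \ref{conjB} to transfer that vanishing to $\textnormal{R}^i\ord_{P^-}^{G,\textnormal{ladm}}$. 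Your argument instead works degree by degree via Grothendieck's universality of $\delta$-functors: you first establish, without invoking Conjecture \ref{conjB} at all, the intermediate identification $h^i(\textnormal{R}\ord_{P^-}^G(\pi)) \cong \textnormal{H}^i\ord_{P^-}^G(\pi)$ by effacing both families on $\fA$ and comparing in degree zero, and only then invoke Conjecture \ref{conjB} to convert $\textnormal{H}^i\ord_{P^-}^G$ into $\textnormal{R}^i\ord_{P^-}^{G,\textnormal{ladm}}$. This buys you a cleaner isolation of where each hypothesis is used: Conjecture \ref{conjA} is responsible for the cogeneration/effacement (hence the $\delta$-functor comparison), while Conjecture \ref{conjB} is responsible only for the final identification of Emerton's $\delta$-functor with the derived functor on the locally admissible category. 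The paper's approach, by contrast, yields the stronger statement at the level of $D^+$, which is the form actually composed with $\textnormal{R}\textnormal{H}^0(I_{M,1},-)$ later in the section, so the two arguments are complementary in emphasis. One small note: the claim that restriction to the closed subgroup $N_0^- \subset I$ preserves injectives is exactly \cite[Prop.~2.1.11]{emerton:ordII}, which the paper cites; your sketched justification via exactness of the left adjoint is fine, but citing it directly would be tighter.
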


\begin{proof}
Let $\iota_G:\mathfrak{Rep}^{\textnormal{adm}}_{z_i = c_i}(G) \longhookrightarrow \mathfrak{Rep}^{\textnormal{ladm}}(G)$ denote the fully faithful inclusion, so that we have a commutative diagram:
\begin{center}
\begin{tikzcd}[column sep = large, row sep = large]
 \mathfrak{Rep}^{\textnormal{adm}}_{z_i = c_i}(G) \ar[r, "\textnormal{Ord}_{P^-}^G"] \ar[d, hook, "\iota_G"'] &  \mathfrak{Rep}^{\textnormal{ladm}}(M) \\
  \mathfrak{Rep}^{\textnormal{ladm}}(G)  \ar[ru, "\textnormal{Ord}_{P^-}^{G,\textnormal{ladm}}"']  &  
\end{tikzcd}
\end{center}
(For the duration of this proof, we use the notation $\textnormal{Ord}_{P^-}^{G,\textnormal{ladm}}$ to distinguish between the two ordinary parts functors.)

Let $\fC$ denote the full subcategory of $\mathfrak{Rep}^{\textnormal{ladm}}(G)$ consisting of $\textnormal{Ord}_{P^-}^{G,\textnormal{ladm}}$-acyclic objects.  Note that $\fC$ contains all injective objects of $\mathfrak{Rep}^{\textnormal{ladm}}(G)$, as well as those locally admissible $G$-representations $A$ such that $A|_I$ is injective in $\mathfrak{Rep}^\infty(I)$ (by \cite[Prop. 2.1.11]{emerton:ordII} and Conjecture \ref{conjB}).  By \cite[Cor. 13.3.8]{kashiwaraschapira}, the category $\fC$ is $\textnormal{Ord}_{P^-}^{G,\textnormal{ladm}}$-injective, and similarly the category $\fA$ is $\iota_G$-injective (compare the discussion at the beginning of Subsubsection \ref{derivedadm}).  Since $\iota_G$ is exact and maps $\fA$ into $\fC$, Proposition 13.3.13(ii) of \emph{op. cit.} implies 
$$\textnormal{R}\textnormal{Ord}_{P^-}^G \simeq \textnormal{R}(\textnormal{Ord}_{P^-}^{G,\textnormal{ladm}} \circ \iota_G) \simeq \textnormal{R}\textnormal{Ord}_{P^-}^{G,\textnormal{ladm}} \circ \textnormal{R}\iota_G \simeq \textnormal{R}\textnormal{Ord}_{P^-}^{G,\textnormal{ladm}} \circ \iota_G.$$ 
\end{proof}

\vspace{5pt}

\subsubsection{}

Now let $\fB$ denote the full subcategory of $\mathfrak{Rep}^{\textnormal{ladm}}(M)$ consisting of $\textnormal{H}^0(I_{M,1},-)$-acyclic objects.  Proceeding as in the discussion of Subsubsection \ref{derivedadm} or the proof of Lemma \ref{comp-ord}, we get that $\fB$ is $\textnormal{H}^0(I_{M,1},-)$-injective.  Thus, by \cite[Prop. 13.3.5(i)]{kashiwaraschapira}, we have a total derived functor
$$\textnormal{R}\textnormal{H}^0(I_{M,1},-): D^+\big(\mathfrak{Rep}^{\textnormal{ladm}}(M)\big) \longrightarrow D^+\big(\mathfrak{Mod-}\cH_M\big),$$
which may be computed using injective resolutions as usual.

\begin{lemma}
\label{comp-coh}
Suppose Conjecture \ref{conjC} is true for $M$, and let $\tau \in \mathfrak{Rep}^{\textnormal{ladm}}(M)$.  Then we have
$$h^i\big(\textnormal{R}\textnormal{H}^0(I_{M,1},\tau)\big) \cong \textnormal{H}^i(I_{M,1},\tau),$$
where $\textnormal{H}^i(I_{M,1},\tau)$ is calculated in the category of smooth $M$-representations.  
\end{lemma}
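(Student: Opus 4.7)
The argument parallels that of Lemma \ref{comp-ord}, with $I_{M,1}$-invariants playing the role of $\ord^G_{P^-}$ and Conjecture \ref{conjC} for $M$ playing the role of Conjectures \ref{conjA}--\ref{conjB}. Let $\iota_M: \mathfrak{Rep}^{\textnormal{ladm}}(M) \longhookrightarrow \mathfrak{Rep}^\infty(M)$ denote the fully faithful exact inclusion, and write $F_{\textnormal{ladm}}$ and $F_\infty$ for the functor $\textnormal{H}^0(I_{M,1},-)$ on the two source categories, so that $F_{\textnormal{ladm}} = F_\infty \circ \iota_M$. My plan is to invoke Proposition 13.3.13(ii) of \cite{kashiwaraschapira} to obtain
$$\textnormal{R}F_{\textnormal{ladm}} \simeq \textnormal{R}F_\infty \circ \textnormal{R}\iota_M \simeq \textnormal{R}F_\infty \circ \iota_M,$$
the second identification using exactness of $\iota_M$; the lemma then follows on taking cohomology in degree $i$, since by construction $h^i\big(\textnormal{R}F_\infty(\iota_M(\tau))\big)$ is the smooth-category $\textnormal{H}^i(I_{M,1},\tau)$.

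To apply the cited proposition I will produce an $F_{\textnormal{ladm}}$-injective subcategory $\fB \subset \mathfrak{Rep}^{\textnormal{ladm}}(M)$ whose image under $\iota_M$ lies in an $F_\infty$-injective subcategory $\fC \subset \mathfrak{Rep}^\infty(M)$. Take $\fB$ to be the full subcategory of those $\tau$ such that $\tau|_{I_M}$ is injective in $\mathfrak{Rep}^\infty(I_M)$, and let $\fC$ be the full subcategory of $F_\infty$-acyclic objects. Conjecture \ref{conjC} for $M$ implies that every injective object of $\mathfrak{Rep}^{\textnormal{ladm}}(M)$ lies in $\fB$, so $\fB$ is cogenerating; moreover, any short exact sequence in $\mathfrak{Rep}^{\textnormal{ladm}}(M)$ whose left and middle terms lie in $\fB$ splits upon restriction to $I_M$ (the left term being $I_M$-injective), so the right term also has injective $I_M$-restriction and $F_{\textnormal{ladm}}$ is exact on the sequence. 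By Corollary 13.3.8 of \cite{kashiwaraschapira}, $\fB$ is $F_{\textnormal{ladm}}$-injective; the same corollary gives that $\fC$ is $F_\infty$-injective. Finally, for $\tau \in \fB$, the restriction from $I_M$ to the open subgroup $I_{M,1}$ preserves injectivity (since $\cind_{I_{M,1}}^{I_M}$ is exact, its right adjoint $\res^{I_M}_{I_{M,1}}$ preserves injectives), so $\tau|_{I_{M,1}}$ is injective and hence $\textnormal{H}^i(I_{M,1},\tau) = 0$ for $i > 0$, that is, $\iota_M(\tau) \in \fC$.

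The only non-formal ingredient is Conjecture \ref{conjC} for $M$, which is what guarantees that $\fB$ is cogenerating; without it the machinery collapses, which is why the lemma is stated conditionally. In the setting of the main theorem of this section---where $\bG$'s simply connected cover is $\bS\bL_{2/F}$ and the only non-trivial Levi is $M = Z$, which has semisimple $F$-rank $0$---Conjecture \ref{conjC} is established in Appendix \ref{app}, so the lemma applies unconditionally.
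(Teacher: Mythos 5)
Your proof is correct and follows essentially the same route as the paper's: Conjecture~\ref{conjC} plus openness of $I_{M,1}$ in $I_M$ give $F_\infty$-acyclicity of the resolving objects, and Proposition~13.3.13(ii) of Kashiwara--Schapira then identifies the two total derived functors. The only cosmetic differences are that the paper resolves by injectives of $\mathfrak{Rep}^{\textnormal{ladm}}(M)$ directly rather than introducing the larger class $\fB$, and that what the cited proposition actually requires is that $\fB$ be $\iota_M$-injective (which your cogeneration and cokernel-closure checks already give, since $\iota_M$ is exact) rather than $F_{\textnormal{ladm}}$-injective, the latter being a consequence via 13.3.13(i) rather than a hypothesis.
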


\begin{proof}
Let $\iota_M': \mathfrak{Rep}^{\textnormal{ladm}}(M) \longhookrightarrow \mathfrak{Rep}^{\infty}(M)$ denote the fully faithful inclusion, so that we have a commutative diagram:
\begin{center}
\begin{tikzcd}[column sep = 50pt, row sep = large]
  \mathfrak{Rep}^{\textnormal{ladm}}(M)  \ar[r, "\textnormal{H}^0(I_{M,1}{,}-)"] \ar[d, hook, "\iota_M' "'] &  \mathfrak{Mod-}\cH_M \\
  \mathfrak{Rep}^{\infty}(M)  \ar[ru, "\textnormal{H}^0_{\infty}(I_{M,1}{,}-)"']  &  
\end{tikzcd}
\end{center}
(For the duration of this proof, we use the notation $\textnormal{H}^0_{\infty}(I_{M,1}{,}-)$ to distinguish between the two functors of invariants.)

Let $\fD$ denote the full subcategory of $\mathfrak{Rep}^{\infty}(M)$ consisting of $\textnormal{H}^0_\infty(I_{M,1},-)$-acyclic objects.  The total derived functor 
$$\textnormal{R}\textnormal{H}^0(I_{M,1},-): D^+\big(\mathfrak{Rep}^{\textnormal{ladm}}(M)\big) \longrightarrow D^+\big(\mathfrak{Mod-}\cH_M\big)$$
may be computed using injective resolutions, and Conjecture \ref{conjC} and \cite[Prop. 2.1.2]{emerton:ordII} imply that $\iota_M'(A) \in \fD$ for any injective $A \in  \mathfrak{Rep}^{\textnormal{ladm}}(M)$.  Therefore, \cite[Prop. 13.3.13(ii)]{kashiwaraschapira} and exactness of $\iota_M'$ imply
$$\textnormal{R}\textnormal{H}^0(I_{M,1},-) \simeq \textnormal{R}(\textnormal{H}^0_\infty(I_{M,1},-) \circ \iota_M') \simeq \textnormal{R}\textnormal{H}^0_\infty(I_{M,1},-) \circ \textnormal{R}\iota_M' \simeq \textnormal{R}\textnormal{H}^0_\infty(I_{M,1},-) \circ \iota_M'.$$
\end{proof}

\vspace{5pt}

\subsubsection{}

Next, we claim that $\ord_{P^-}^G(A) \in \fB$ for $A \in \fA$.  Recall that $\ord_{P^-}^G(A) = C[z^{\pm 1}] \otimes_{C[z]} A^{N_0^-}$ with $z$ as in Subsubsection \ref{smooth reps}.  Given a profinite group $H$ and a smooth $H$-representation $V$, we let $\textnormal{C}^i(H,V)$ denote the vector space of $V$-valued inhomogeneous $i$-cochains.  Letting $\cK$ denote the set of open normal subgroups of $I_{M,1}$, we have the following sequence of isomorphisms:
\begin{eqnarray}
C[z^{\pm 1}] \otimes_{C[z]} \textnormal{H}^i(I_{M,1},A^{N_0^-}) & \cong & C[z^{\pm 1}] \otimes_{C[z]} \Big( \varinjlim_{K \in \cK} \textnormal{H}^i(I_{M,1}/K,A^{KN_0^-}) \Big) \label{a}\\
 & \cong & \varinjlim_{K \in \cK}  C[z^{\pm 1}] \otimes_{C[z]}  \textnormal{H}^i(I_{M,1}/K,A^{KN_0^-}) \label{b} \\
 & \cong & \varinjlim_{K \in \cK}  C[z^{\pm 1}] \otimes_{C[z]}  h^i \big(\textnormal{C}^\bullet(I_{M,1}/K,A^{KN_0^-})\big) \notag \\
 & \cong & \varinjlim_{K \in \cK}   h^i \big(C[z^{\pm 1}] \otimes_{C[z]} \textnormal{C}^\bullet(I_{M,1}/K,A^{KN_0^-})\big) \label{c} \\
 & \cong & \varinjlim_{K \in \cK}   h^i \big(\textnormal{C}^\bullet(I_{M,1}/K, C[z^{\pm 1}] \otimes_{C[z]} A^{KN_0^-})\big) \label{d} \\
 & \cong & \varinjlim_{K \in \cK}   \textnormal{H}^i(I_{M,1}/K, C[z^{\pm 1}] \otimes_{C[z]} A^{KN_0^-}) \notag \\ 
 & \cong & \textnormal{H}^i(I_{M,1}, C[z^{\pm 1}] \otimes_{C[z]} A^{N_0^-}) \label{e}
\end{eqnarray}
The isomorphism \eqref{a} follows from \cite[\S I.2.2, Cor. 1]{serre:galoiscoh}; \eqref{b} follows from the fact that direct limits commute with tensor products; \eqref{c} follows from the fact that cohomology of a cochain complex commutes with exact functors; \eqref{d} follows from the isomorphism of $C[z]$-modules $\textnormal{C}^i(I_{M,1}/K,A^{KN_0^-}) \cong (A^{KN_0^-})^{\oplus i[I_{M,1}: K]}$; and \eqref{e} follows from the fact that $\varinjlim_{K \in \cK} C[z^{\pm 1}] \otimes_{C[z]} A^{KN_0^-} \cong C[z^{\pm 1}] \otimes_{C[z]} A^{N_0^-}$ and \cite[\S I.2.2, Prop. 8]{serre:galoiscoh}.

By the paragraph above, in order to show $\textnormal{H}^i(I_{M,1},\ord_{P^-}^G(A)) = 0$ for $i > 0$, it suffices to show $\textnormal{H}^i(I_{M,1},A^{N_0^-}) = 0$ for $i > 0$.  Note that we have a Hochschild--Serre spectral sequence
$$\textnormal{H}^i\big(I_{M,1}, \textnormal{H}^j(N_0^-, A)\big) \Longrightarrow \textnormal{H}^{i + j}(I_1 \cap P^-,A).$$
By the definition of $\fA$ and \cite[Prop. 2.1.11]{emerton:ordII}, we have that $A|_{N_0^-}$ is injective, so that $\textnormal{H}^j(N_0^-, A) = 0$ for $j > 0$.  The above spectral sequence therefore collapses to give
$$\textnormal{H}^i(I_{M,1},  A^{N_0^-}) \cong \textnormal{H}^{i}(I_1 \cap P^-,A).$$
Applying Proposition 2.1.11 of \emph{op. cit.} once more to the group $I_1 \cap P^-$ gives $\textnormal{H}^{i}(I_1 \cap P^-,A) = 0$ for $i > 0$, which gives the claim.

Since $\ord_{P^-}^G(\fA) \subset \fB$, \cite[Prop. 13.3.13(ii)]{kashiwaraschapira} implies that $\fA$ is $\textnormal{H}^0(I_{M,1},-) \circ \ord_{P^-}^G$-injective, and we have a natural equivalence
\begin{equation}\label{admnattrans}
\textnormal{R}\big(\textnormal{H}^0(I_{M,1},-) \circ \ord_{P^-}^G\big) \simeq \textnormal{R}\textnormal{H}^0(I_{M,1},-) \circ \textnormal{R}\ord_{P^-}^G.
\end{equation}
In particular, we may calculate $\textnormal{R}(\textnormal{H}^0(I_{M,1},-) \circ \ord_{P^-}^G)$ using resolutions in $\fA$.

\begin{lemma}
Suppose Conjectures \ref{conjA} and \ref{conjB} are true for $G$, and Conjecture \ref{conjC} is true for $M$.  Applying the natural transformation \eqref{admnattrans} to $\pi \in \mathfrak{Rep}^{\textnormal{adm}}_{z_i = c_i}(G)$ yields a spectral sequence of $\cH_M$-modules
\begin{equation}
\label{admss1}
\textnormal{H}^i\big(I_{M,1},\textnormal{R}^j\ord_{P^-}^G(\pi)\big) \Longrightarrow \textnormal{R}^{i + j}\big(\textnormal{H}^0(I_{M,1},-) \circ \ord_{P^-}^G\big)(\pi),
\end{equation} 
where $\textnormal{R}^j\ord_{P^-}^G$ is calculated in the category $\mathfrak{Rep}^{\textnormal{ladm}}(G)$ and $\textnormal{H}^i\big(I_{M,1},-)$ is calculated in the category $\mathfrak{Rep}^\infty(M)$.  
\end{lemma}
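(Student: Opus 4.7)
The plan is to extract the spectral sequence from the already-established natural equivalence \eqref{admnattrans} by a standard Grothendieck-type argument. First choose a resolution $\pi \hookrightarrow A^\bullet$ with $A^i \in \fA$ (which exists by Conjecture \ref{conjA} and the fact that $\fA$ is cogenerating). Then $\textnormal{R}\ord_{P^-}^G(\pi)$ is represented by $\ord_{P^-}^G(A^\bullet)$, a bounded-below complex lying in the $\textnormal{H}^0(I_{M,1},-)$-acyclic subcategory $\fB \subset \mathfrak{Rep}^{\textnormal{ladm}}(M)$, as shown just before equation \eqref{admnattrans}.

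Next take a Cartan--Eilenberg resolution $\ord_{P^-}^G(A^\bullet) \to J^{\bullet, \bullet}$ by injectives in $\mathfrak{Rep}^{\textnormal{ladm}}(M)$ (possible since this category has enough injectives), and form the double complex $\textnormal{H}^0(I_{M,1}, J^{\bullet, \bullet})$ of $\cH_M$-modules. The natural equivalence \eqref{admnattrans} shows that the totalization of this double complex represents $\textnormal{R}(\textnormal{H}^0(I_{M,1},-) \circ \ord_{P^-}^G)(\pi)$: indeed, the standard spectral sequence that takes vertical cohomology first collapses because each column $J^{i, \bullet}$ is an injective resolution of $\ord_{P^-}^G(A^i) \in \fB$, and the resulting first row recovers $\textnormal{H}^0(I_{M,1}, \ord_{P^-}^G(A^\bullet))$. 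The other standard spectral sequence of the double complex, obtained by taking horizontal cohomology first, has $E_2$ page
$$E_2^{i, j} = \textnormal{H}^i\big(I_{M,1}, h^j(\ord_{P^-}^G(A^\bullet))\big),$$
where $\textnormal{H}^i(I_{M,1},-)$ is a priori computed in $\mathfrak{Rep}^{\textnormal{ladm}}(M)$; here we use that the horizontal kernels and images of $J^{\bullet, \bullet}$ are injective (a Cartan--Eilenberg property), so $\textnormal{H}^0(I_{M,1},-)$ commutes with horizontal cohomology.

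Finally, Lemma \ref{comp-ord} identifies $h^j(\ord_{P^-}^G(A^\bullet)) \cong \textnormal{R}^j\ord_{P^-}^G(\pi)$ as computed in $\mathfrak{Rep}^{\textnormal{ladm}}(G)$, and Lemma \ref{comp-coh} identifies $\textnormal{H}^i(I_{M,1},-)$ on $\mathfrak{Rep}^{\textnormal{ladm}}(M)$ with the corresponding functor on $\mathfrak{Rep}^\infty(M)$. Together these yield the stated $E_2$ page abutting to $\textnormal{R}^{i+j}(\textnormal{H}^0(I_{M,1},-) \circ \ord_{P^-}^G)(\pi)$. The $\cH_M$-equivariance throughout is automatic from the naturality of the constructions and the functorial $\cH_M$-action on $\textnormal{H}^0(I_{M,1},-)$ recalled in Subsection \ref{subsec:heckealgs}. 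The real content of the lemma is already contained in the comparison Lemmas \ref{comp-ord} and \ref{comp-coh} together with the verification $\ord_{P^-}^G(\fA) \subset \fB$; what remains is essentially bookkeeping, the only subtlety being the verification that the Cartan--Eilenberg formalism applies cleanly in $\mathfrak{Rep}^{\textnormal{ladm}}(M)$ and preserves the $\cH_M$-module structure.
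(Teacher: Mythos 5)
Your proof is correct and follows essentially the same route as the paper, which simply cites the Grothendieck spectral sequence construction together with Lemmas \ref{comp-ord} and \ref{comp-coh}; you have merely unfolded the Cartan--Eilenberg details that the paper leaves implicit.
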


\begin{proof}
This follows from the construction of the Grothendieck spectral sequence and Lemmas \ref{comp-ord} and \ref{comp-coh}.
\end{proof}

\vspace{5pt}

\subsubsection{}

We now examine the derived functors of the composition $\cR^{\cH}_{\cH_M} \circ \textnormal{H}^0(I_1,-)$.  We continue to assume Conjecture \ref{conjA} for $G$.

Let $A \in \fA$.  Since $I_1$ is open in $I$, the restriction $A|_{I_1}$ is injective in $\mathfrak{Rep}^\infty(I_1)$ (\cite[Prop. 2.1.2]{emerton:ordII}).  Therefore, $\fA$ satisfies point (iii) of \cite[Cor. 13.3.8]{kashiwaraschapira} (for the functor $\textnormal{H}^0(I_1,-)$), and points (i) and (ii) hold exactly as in Subsubsection \ref{derivedadm}.  By \emph{op. cit.}, we obtain a total derived functor
$$\textnormal{R}\textnormal{H}^0(I_1,-): D^+\big(\mathfrak{Rep}^{\textnormal{adm}}_{z_i = c_i}(G)\big) \longrightarrow D^+\big(\mathfrak{Mod-}\cH\big),$$
which may be computed using resolutions in $\fA$.

\begin{lemma}
\label{comp-hecke}
Suppose Conjecture \ref{conjA} is true for $G$, and let $\pi \in \mathfrak{Rep}^{\textnormal{adm}}_{z_i = c_i}(G)$.  Then we have
$$h^i\big(\textnormal{R}\textnormal{H}^0(I_1,\pi)\big) \cong \textnormal{H}^i(I_1,\pi),$$
where $\textnormal{H}^i(I_1,\pi)$ is calculated in the category $\mathfrak{Rep}^{\infty}(G)$.  
\end{lemma}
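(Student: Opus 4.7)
The plan is to mirror the argument of Lemma \ref{comp-coh} verbatim, now on the $G$-side. Write $\iota_G^\infty : \mathfrak{Rep}^{\textnormal{adm}}_{z_i = c_i}(G) \longhookrightarrow \mathfrak{Rep}^\infty(G)$ for the fully faithful inclusion, and let $\textnormal{H}^0_\infty(I_1,-) : \mathfrak{Rep}^\infty(G) \longrightarrow \mathfrak{Mod-}\cH$ denote the ordinary $I_1$-invariants functor (just to distinguish it from the one on the admissible-with-fixed-central-character subcategory). Then the relevant triangle
\begin{center}
\begin{tikzcd}[column sep = 50pt, row sep = large]
\mathfrak{Rep}^{\textnormal{adm}}_{z_i = c_i}(G) \ar[r, "\textnormal{H}^0(I_1{,}-)"] \ar[d, hook, "\iota_G^\infty"'] & \mathfrak{Mod-}\cH \\
\mathfrak{Rep}^\infty(G) \ar[ru, "\textnormal{H}^0_\infty(I_1{,}-)"'] &
\end{tikzcd}
\end{center}
commutes tautologically, and the goal reduces to showing that the total derived functor $\textnormal{R}\textnormal{H}^0(I_1,-)$ constructed using resolutions in $\fA$ coincides with $\textnormal{R}\textnormal{H}^0_\infty(I_1,-) \circ \iota_G^\infty$.

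To invoke \cite[Prop. 13.3.13(ii)]{kashiwaraschapira}, I need two things: (i) that $\fA$ is $\iota_G^\infty$-injective, and (ii) that $\iota_G^\infty(\fA)$ consists of $\textnormal{H}^0_\infty(I_1,-)$-acyclic objects. Point (i) is exactly the content established in the discussion immediately preceding the lemma: granting Conjecture \ref{conjA}, $\fA$ satisfies the three conditions of \cite[Cor. 13.3.8]{kashiwaraschapira}, and exactness of $\iota_G^\infty$ transfers this formally. For point (ii), let $A \in \fA$; by definition $A|_I$ is injective in $\mathfrak{Rep}^\infty(I)$, and since $I_1$ is an open subgroup of $I$, \cite[Prop. 2.1.2]{emerton:ordII} forces $A|_{I_1}$ to be injective in $\mathfrak{Rep}^\infty(I_1)$. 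Consequently $\textnormal{H}^i(I_1, A) = 0$ for $i > 0$ whether computed in $\mathfrak{Rep}^\infty(G)$ or on the subcategory, giving the desired acyclicity.

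With (i) and (ii) in place, \cite[Prop. 13.3.13(ii)]{kashiwaraschapira} yields
$$\textnormal{R}\textnormal{H}^0(I_1,-) \simeq \textnormal{R}\big(\textnormal{H}^0_\infty(I_1,-) \circ \iota_G^\infty\big) \simeq \textnormal{R}\textnormal{H}^0_\infty(I_1,-) \circ \textnormal{R}\iota_G^\infty \simeq \textnormal{R}\textnormal{H}^0_\infty(I_1,-) \circ \iota_G^\infty,$$
where the last identification uses exactness of $\iota_G^\infty$. Taking cohomology objects $h^i$ on $\pi$ produces the claimed isomorphism, since the right-hand side is by construction $\textnormal{H}^i(I_1,\pi)$ computed in $\mathfrak{Rep}^\infty(G)$. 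There is really no serious obstacle here: the only non-formal input is the transfer of injectivity from $I$ to $I_1$, which was already installed as part of the definition of $\fA$ together with Conjecture \ref{conjA}, and every remaining step is a direct application of the Kashiwara--Schapira machinery used identically in Lemmas \ref{comp-ord} and \ref{comp-coh}.
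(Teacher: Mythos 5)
Your proposal is correct and follows essentially the same route as the paper: it introduces the same fully faithful inclusion into $\mathfrak{Rep}^\infty(G)$, uses \cite[Prop.~2.1.2]{emerton:ordII} to transfer injectivity from $I$ to $I_1$ and hence establish that $\fA$ lands in $\textnormal{H}^0_\infty(I_1,-)$-acyclic objects, and closes via \cite[Prop.~13.3.13(ii)]{kashiwaraschapira}. The paper's proof is structured identically, differing only in notation (it names the acyclic subcategory $\fE$ and the inclusion $\iota_G''$).
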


\begin{proof}
Let $\iota_G'':\mathfrak{Rep}^{\textnormal{adm}}_{z_i = c_i}(G) \longhookrightarrow \mathfrak{Rep}^{\infty}(G)$ denote the fully faithful inclusion, so that we have a commutative diagram:
\begin{center}
\begin{tikzcd}[column sep = 50pt, row sep = large]
 \mathfrak{Rep}^{\textnormal{adm}}_{z_i = c_i}(G) \ar[r, "\textnormal{H}^0(I_1{,}-)"] \ar[d, hook, "\iota_G'' "'] &  \mathfrak{Mod-}\cH \\
  \mathfrak{Rep}^{\infty}(G)  \ar[ru, "\textnormal{H}^0_\infty(I_1{,}-)"']  &  
\end{tikzcd}
\end{center}
(For the duration of this proof, we use the notation $\textnormal{H}^0_{\infty}(I_{1}{,}-)$ to distinguish between the two functors of invariants.)

Let $\fE$ denote the full subcategory of $\mathfrak{Rep}^{\infty}(G)$ consisting of $\textnormal{H}^0_\infty(I_{1},-)$-acyclic objects.  By \cite[Cor. 13.3.8]{kashiwaraschapira}, the category $\fE$ is $\textnormal{H}^0_\infty(I_1,-)$-injective, and likewise the category $\fA$ is $\iota_G''$-injective.  By \cite[Prop. 2.1.2]{emerton:ordII}, the functor $\iota_G''$ maps $\fA$ into $\fE$.  Therefore, the exactness of $\iota_G''$ and \cite[Prop. 13.3.13(ii)]{kashiwaraschapira} give
$$\textnormal{R}\textnormal{H}^0(I_{1},-) \simeq \textnormal{R}(\textnormal{H}^0_\infty(I_{1},-) \circ \iota_G'') \simeq \textnormal{R}\textnormal{H}^0_\infty(I_{1},-) \circ \textnormal{R}\iota_G'' \simeq \textnormal{R}\textnormal{H}^0_\infty(I_{1},-) \circ \iota_G''.$$
\end{proof}

\vspace{5pt}

\subsubsection{}

Let $\fF$ denote the full subcategory of $\mathfrak{Mod-}\cH$ consisting of $\cR^{\cH}_{\cH_M}$-acyclic objects.   By \cite[Cor. 13.3.8, Prop. 13.3.5(i)]{kashiwaraschapira}, $\fF$ is $\cR^{\cH}_{\cH_M}$-injective, and we have a total derived functor
$$\textnormal{R}\cR^{\cH}_{\cH_M}: D^+\big(\mathfrak{Mod-}\cH\big) \longrightarrow D^+\big(\mathfrak{Mod-}\cH_M\big),$$
which may be computed using injective resolutions as usual.

\vspace{5pt}

\subsubsection{}

By the final paragraph in Subsubsection \ref{hecke mods}, we see that $\fF$ contains all finite-dimensional $\cH$-modules.   Furthermore, since $\textnormal{H}^0(I_1, A)$ is finite-dimensional for every $A \in \fA$, we have $\textnormal{H}^0(I_1, \fA)\subset \fF$.  Proposition 13.3.13(ii) of \cite{kashiwaraschapira} then implies that $\fA$ is $\cR^{\cH}_{\cH_M} \circ \textnormal{H}^0(I_1,-)$-acyclic, and we have
\begin{equation}
\label{admnattrans2}
\textnormal{R}\big(\cR^{\cH}_{\cH_M} \circ \textnormal{H}^0(I_1,-)\big) \simeq \textnormal{R}\cR^{\cH}_{\cH_M} \circ \textnormal{R}\textnormal{H}^0(I_1,-).
\end{equation}
In particular, we may calculate $\textnormal{R}(\cR^{\cH}_{\cH_M} \circ \textnormal{H}^0(I_1,-))$ using resolutions in $\fA$.

\begin{lemma}
Suppose Conjecture \ref{conjA} is true for $G$.  Applying the natural transformation \eqref{admnattrans2} to $\pi \in \mathfrak{Rep}^{\textnormal{adm}}_{z_i = c_i}(G)$ yields a spectral sequence of $\cH_M$-modules
\begin{equation}
\label{admss2}
\textnormal{R}^i\cR_{\cH_M}^{\cH}\big(\textnormal{H}^j(I_1,\pi)\big) \Longrightarrow \textnormal{R}^{i + j}\big(\cR^{\cH}_{\cH_M} \circ \textnormal{H}^0(I_1,-)\big)(\pi),
\end{equation}
where $\textnormal{H}^j(I_1,-)$ is calculated in the category $\mathfrak{Rep}^\infty(G)$.  
\end{lemma}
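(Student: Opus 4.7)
The plan is to mimic the proof of the preceding analogous lemma (for the composition $\textnormal{H}^0(I_{M,1},-) \circ \ord_{P^-}^G$), now applied to the composition $\cR^{\cH}_{\cH_M} \circ \textnormal{H}^0(I_1,-)$. All the substantive input has already been established just before the statement: granting Conjecture \ref{conjA}, the class $\fA$ is $\textnormal{H}^0(I_1,-)$-injective and hence carries $\textnormal{R}\textnormal{H}^0(I_1,-)$ on $D^+(\mathfrak{Rep}^{\textnormal{adm}}_{z_i=c_i}(G))$; the class $\fF$ of $\cR^{\cH}_{\cH_M}$-acyclic modules is $\cR^{\cH}_{\cH_M}$-injective; and crucially $\textnormal{H}^0(I_1,\fA) \subset \fF$ since Hecke modules of invariants of admissible representations are finite-dimensional. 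These facts together have already yielded the natural equivalence \eqref{admnattrans2}, so only the passage from a composition of total derived functors to the associated $E_2$ spectral sequence remains.

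Thus the plan is the following. First, choose a resolution $\pi \longhookrightarrow A^\bullet$ with $A^i \in \fA$ (which exists by Conjecture \ref{conjA} and a standard inductive construction), and form the complex of $\cH$-modules $\fm^\bullet := \textnormal{H}^0(I_1,A^\bullet)$. By Lemma \ref{comp-hecke} and the fact that $\fA$ computes $\textnormal{R}\textnormal{H}^0(I_1,-)$, the complex $\fm^\bullet$ represents $\textnormal{R}\textnormal{H}^0(I_1,\pi)$ in $D^+(\mathfrak{Mod-}\cH)$, with $h^j(\fm^\bullet) \cong \textnormal{H}^j(I_1,\pi)$ computed in $\mathfrak{Rep}^\infty(G)$.

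Second, take a Cartan--Eilenberg resolution $\fm^\bullet \longrightarrow \fI^{\bullet,\bullet}$ by objects of $\fF$ in $\mathfrak{Mod-}\cH$, apply $\cR^{\cH}_{\cH_M}$ to obtain a double complex in $\mathfrak{Mod-}\cH_M$, and consider the associated filtered total complex. By \eqref{admnattrans2} the total complex represents $\textnormal{R}(\cR^{\cH}_{\cH_M} \circ \textnormal{H}^0(I_1,-))(\pi)$. Running the standard hypercohomology spectral sequence that begins by taking vertical cohomology gives the $E_2$ page
\[
E_2^{i,j} \;=\; \textnormal{R}^i\cR^{\cH}_{\cH_M}\big(h^j(\fm^\bullet)\big) \;=\; \textnormal{R}^i\cR^{\cH}_{\cH_M}\big(\textnormal{H}^j(I_1,\pi)\big),
\]
converging to $h^{i+j}$ of the total complex, which is $\textnormal{R}^{i+j}(\cR^{\cH}_{\cH_M} \circ \textnormal{H}^0(I_1,-))(\pi)$.

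There is essentially no obstacle: the construction is formal, and the only non-formal inputs are the acyclicity statements for $\fA$ and Lemma \ref{comp-hecke} that identify the cohomology $h^j(\fm^\bullet)$ with $\textnormal{H}^j(I_1,\pi)$ taken in $\mathfrak{Rep}^\infty(G)$, both already in hand. The proof therefore reduces to a one-line invocation: the spectral sequence is the Grothendieck spectral sequence attached to the composition \eqref{admnattrans2}, with the $E_2$ identification provided by Lemma \ref{comp-hecke}.
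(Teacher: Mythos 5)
Your proposal is correct and matches the paper's proof, which is stated tersely as ``This follows from the construction of the Grothendieck spectral sequence and Lemma \ref{comp-hecke}.'' You simply unpack the standard Cartan--Eilenberg construction of that spectral sequence, using the already-established facts that $\fA$ computes $\textnormal{R}\textnormal{H}^0(I_1,-)$, that $\textnormal{H}^0(I_1,\fA)\subset\fF$, and that Lemma \ref{comp-hecke} identifies the $E_2$ entries.
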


\begin{proof}
This follows from the construction of the Grothendieck spectral sequence and Lemma \ref{comp-hecke}.
\end{proof}

\begin{lemma}
\label{lem adm implies fd}
Suppose $\pi\in \mathfrak{Rep}^{\textnormal{adm}}(G)$.  Then $\dim_C(\textnormal{H}^j(I_1,\pi)) < \infty$ for every $j \geq 0$.  
\end{lemma}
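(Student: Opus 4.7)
The plan is to reduce to the case where $I_1$ is torsion-free pro-$p$ and then use Pontryagin duality between smooth and compact modules over the Iwasawa algebra. Since $I_1$ is a compact $p$-adic analytic group, Lazard supplies an open normal subgroup $I_1' \subseteq I_1$ which is uniform (in particular torsion-free) pro-$p$; the quotient $I_1/I_1'$ is finite, and its cohomology with finite-dimensional $C$-coefficients is automatically finite-dimensional, so the Lyndon--Hochschild--Serre spectral sequence
$$\textnormal{H}^p\bigl(I_1/I_1',\textnormal{H}^q(I_1',\pi)\bigr) \Longrightarrow \textnormal{H}^{p+q}(I_1,\pi)$$
reduces the problem to showing that each $\textnormal{H}^q(I_1',\pi)$ is finite-dimensional over $C$. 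We may therefore assume $I_1$ is itself torsion-free uniform pro-$p$ of some dimension $d$.

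Set $\Lambda := \iw{I_1}$; by Lazard this is a Noetherian local $C$-algebra with residue field $C$ and finite global dimension $d$. The next step is to identify the Pontryagin dual $\pi^\vee := \Hom_C(\pi,C)$ as a finitely generated (compact) $\Lambda$-module. By topological Nakayama this reduces to the finite-dimensionality of $\pi^\vee / \mathfrak{m}_\Lambda \pi^\vee$, which under the duality between discrete and compact $\Lambda$-modules is identified with $(\pi^{I_1})^\vee$; the latter is finite-dimensional precisely because $\pi$ is admissible.

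Finally, Pontryagin duality between smooth and compact $\Lambda$-modules yields a natural isomorphism
$$\textnormal{H}^j(I_1,\pi)^\vee \;\cong\; \textnormal{Tor}_j^\Lambda(C,\pi^\vee).$$
Since $\pi^\vee$ is finitely generated over the Noetherian ring $\Lambda$, and $\Lambda$ has finite global dimension, $\pi^\vee$ admits a finite resolution by finitely generated free $\Lambda$-modules; tensoring this resolution with $C$ over $\Lambda$ produces a complex of finite-dimensional $C$-vector spaces, whose homology computes $\textnormal{Tor}_j^\Lambda(C,\pi^\vee)$. Each Tor is therefore finite-dimensional over $C$, and the conclusion follows. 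The main obstacle I anticipate is properly handling the topological subtleties (compactness of duals, continuity of the $\Lambda$-action) needed to justify the Ext/Tor swap in Pontryagin duality and the identification $\pi^\vee/\mathfrak{m}_\Lambda\pi^\vee \cong (\pi^{I_1})^\vee$; these steps are standard but require care.
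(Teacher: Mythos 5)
Your proof is correct. The paper itself gives no argument beyond citing Emerton (\cite[Lem.~3.4.4]{emerton:ordII}), and your proposal essentially reconstructs the content of that reference: the reduction to a uniform open normal subgroup $I_1'$ via Hochschild--Serre, the observation that admissibility of $\pi$ is equivalent (via topological Nakayama applied to $\pi^\vee / \mathfrak{m}_\Lambda \pi^\vee \cong (\pi^{I_1'})^\vee$) to finite generation of $\pi^\vee$ over the Iwasawa algebra $\Lambda = \iw{I_1'}$, the duality $\textnormal{H}^j(I_1',\pi)^\vee \cong \textnormal{Tor}^\Lambda_j(C,\pi^\vee)$, and the fact that $\Lambda$ is Noetherian local of finite global dimension (Lazard) so that $\pi^\vee$ has a finite resolution by finitely generated free modules. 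Each step is standard, and the topological care you flag (pseudocompact vs.\ abstract $\Lambda$-modules, compatibility of the Ext/Tor swap under Pontryagin duality) is genuinely the only place where details must be checked; for finitely generated modules over the Noetherian ring $\Lambda$ the pseudocompact and algebraic Tor agree, so the argument goes through. One small point worth making explicit: in the Hochschild--Serre reduction you should note that $\pi|_{I_1'}$ is still admissible (immediate, since $I_1'$ is open in $I_1$) so that the inductive hypothesis applies, and that the $E_2$-page is first quadrant so only finitely many $E_2^{p,q}$ contribute to each $\textnormal{H}^n$.
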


\begin{proof}
This follows from \cite[Lem. 3.4.4]{emerton:ordII}.
\end{proof}

Suppose now that $\pi \in \mathfrak{Rep}^{\textnormal{adm}}_{z_i = c_i}(G)$.  Combining Lemma \ref{lem adm implies fd} with the last paragraph of Subsubsection \ref{hecke mods}, we see that $\textnormal{R}^i\cR_{\cH_M}^{\cH}(\textnormal{H}^j(I_1,\pi)) = 0$ for $i > 0$.  Thus the spectral sequence \eqref{admss2} collapses to an isomorphism of $\cH_M$-modules
\begin{equation}
\label{isom2}
 \cR^{\cH}_{\cH_M}\big(\textnormal{H}^j(I_1,\pi)\big) \cong \textnormal{R}^j\big(\cR^{\cH}_{\cH_M} \circ \textnormal{H}^0(I_1,-)\big)(\pi).
\end{equation}
Combining the isomorphism \eqref{isom2} with the spectral sequence \eqref{admss1} along with the commutativity of the diagram \eqref{admcommdiag2} gives the following result.

\begin{thm}
Suppose Conjectures \ref{conjA} and \ref{conjB} are true for $G$, and Conjecture \ref{conjC} is true for $M$.  Let $\pi \in \mathfrak{Rep}^{\textnormal{adm}}_{z_i = c_i}(G)$.  Then we have an $E_2$ spectral sequence of $\cH_M$-modules
$$\textnormal{H}^i\big(I_{M,1},\textnormal{R}^j\ord_{P^-}^G(\pi)\big) \Longrightarrow \cR^{\cH}_{\cH_M}\big(\textnormal{H}^{i + j}(I_1,\pi)\big).$$
\end{thm}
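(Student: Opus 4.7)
The plan is to combine the three ingredients built up in the preceding subsubsections: the Grothendieck-type spectral sequence \eqref{admss1}, its counterpart \eqref{admss2}, and the commutativity of diagram \eqref{admcommdiag2}. The key observation that makes everything collapse correctly is finite-dimensionality of pro-$p$-Iwahori cohomology for admissible representations.

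First, I would record that for $\pi \in \mathfrak{Rep}^{\textnormal{adm}}_{z_i = c_i}(G)$, Lemma \ref{lem adm implies fd} ensures that each $\textnormal{H}^j(I_1,\pi)$ is finite-dimensional over $C$. Then, by the Mittag-Leffler argument at the end of Subsubsection \ref{hecke mods}, one has $\textnormal{R}^i\cR_{\cH_M}^{\cH}(\textnormal{H}^j(I_1,\pi)) = 0$ for all $i > 0$. Consequently, the spectral sequence \eqref{admss2} degenerates on the $E_2$-page, yielding the isomorphism
$$\cR^{\cH}_{\cH_M}\big(\textnormal{H}^j(I_1,\pi)\big) \cong \textnormal{R}^j\big(\cR^{\cH}_{\cH_M} \circ \textnormal{H}^0(I_1,-)\big)(\pi)$$
recorded as \eqref{isom2}.

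Next, I would invoke the commutativity (up to natural equivalence) of diagram \eqref{admcommdiag2}, which identifies the two compositions
$$\cR^{\cH}_{\cH_M} \circ \textnormal{H}^0(I_1,-) \simeq \textnormal{H}^0(I_{M,1},-) \circ \ord^G_{P^-}$$
on $\mathfrak{Rep}^{\textnormal{adm}}_{z_i = c_i}(G)$. Passing to right derived functors, the right-hand side of \eqref{isom2} may therefore be rewritten as $\textnormal{R}^j(\textnormal{H}^0(I_{M,1},-) \circ \ord^G_{P^-})(\pi)$. Finally, feeding this identification into the spectral sequence \eqref{admss1} identifies its abutment with $\cR^{\cH}_{\cH_M}(\textnormal{H}^{i+j}(I_1,\pi))$, producing the desired $E_2$ spectral sequence
$$\textnormal{H}^i\big(I_{M,1},\textnormal{R}^j\ord_{P^-}^G(\pi)\big) \Longrightarrow \cR^{\cH}_{\cH_M}\big(\textnormal{H}^{i+j}(I_1,\pi)\big).$$

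I do not anticipate a genuine obstacle here; all the substantive work (verifying that $\fA$ is acyclic for the relevant compositions, proving the two Grothendieck spectral sequences \eqref{admss1}--\eqref{admss2}, and establishing the comparison lemmas \ref{comp-ord}, \ref{comp-coh}, \ref{comp-hecke}) has already been done under Conjectures \ref{conjA}--\ref{conjC}. The only subtlety worth flagging is that the equivalence coming from diagram \eqref{admcommdiag2} must be promoted from underived functors to their total derived functors before it can be applied to $\textnormal{R}^j$; this is automatic once one knows both sides admit total derived functors on $D^+(\mathfrak{Rep}^{\textnormal{adm}}_{z_i = c_i}(G))$, which is exactly what \eqref{admnattrans} and \eqref{admnattrans2} provide.
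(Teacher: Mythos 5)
Your proposal is correct and follows exactly the paper's own proof: finite-dimensionality from Lemma \ref{lem adm implies fd} kills the higher $\textnormal{R}^i\cR_{\cH_M}^{\cH}$, collapsing \eqref{admss2} to \eqref{isom2}, which is then substituted into \eqref{admss1} via the commutativity of \eqref{admcommdiag2}. Your remark about promoting the commutativity to total derived functors is precisely the role played by \eqref{admnattrans} and \eqref{admnattrans2} in the paper.
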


\begin{cor}\label{ordss}
Suppose the simply connected cover of the derived subgroup of $\bG$ is $\bS\bL_{2/F}$, and let $\pi \in \mathfrak{Rep}^{\textnormal{adm}}_{z_i = c_i}(G)$.  Then we have an $E_2$ spectral sequence of $\cH_M$-modules
$$\textnormal{H}^i\big(I_{M,1},\textnormal{R}^j\ord_{P^-}^G(\pi)\big) \Longrightarrow \cR^{\cH}_{\cH_M}\big(\textnormal{H}^{i + j}(I_1,\pi)\big).$$
\end{cor}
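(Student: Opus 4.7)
The plan is to deduce the corollary as a direct application of the preceding theorem, so the task reduces to verifying Conjectures \ref{conjA} and \ref{conjB} for $G$ and Conjecture \ref{conjC} for $M$, under the hypothesis that the simply connected cover of the derived subgroup of $\bG$ is $\bS\bL_{2/F}$. All three verifications are supposed to live in Appendix \ref{app}; here I would simply extract the relevant inputs and assemble them.

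First, since $\bG_{\textnormal{der}}^{\textnormal{sc}} = \bS\bL_{2/F}$, the semisimple $F$-rank of $\bG$ is $1$. Appendix \ref{app} establishes Conjecture \ref{conjA} for groups of semisimple $F$-rank $0$ or $1$, so it applies to $G$. Likewise, the appendix establishes Conjecture \ref{conjB} precisely under the hypothesis that $\bG_{\textnormal{der}}^{\textnormal{sc}} = \bS\bL_{2/F}$, so Conjecture \ref{conjB} holds for $G$ as well. It remains to handle Conjecture \ref{conjC} for each Levi subgroup $M$ that can appear.

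Because $\bG$ has semisimple $F$-rank $1$, the only standard parabolic subgroups are $\bG$ itself and the minimal parabolic $\bB$. I would split into two cases accordingly. If $P = G$, then $M = G$, the unipotent radical $N$ is trivial, and $\ord_{P^-}^G$ is canonically isomorphic to the identity on $\mathfrak{Rep}^{\textnormal{adm}}(G)$ (it has no higher derived functors in nonzero degrees), so the asserted $E_2$ spectral sequence collapses trivially to the tautological identification of $\textnormal{H}^{i+j}(I_1,\pi)$ with itself (viewed via the identity embedding $\cR^\cH_{\cH_G} \simeq \textnormal{id}$). If $P = B$, then $M = \bZ(F)$ is the Levi factor of the minimal $F$-parabolic subgroup; this group has semisimple $F$-rank $0$, and Appendix \ref{app} establishes Conjecture \ref{conjC} in that setting, so it applies to this $M$.

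With all three conjectures in place (in each case of $P$), the hypotheses of the preceding theorem are satisfied, and the asserted $E_2$ spectral sequence of $\cH_M$-modules follows at once. The only potential friction is the bookkeeping around the trivial case $P = G$ and, for $P = B$, making sure the version of Conjecture \ref{conjC} proved in Appendix \ref{app} is stated for exactly the Levi $\bZ$ we are using; but since the chosen Levi components are by convention those containing $\bZ$, and $\bZ$ itself is the Levi of the minimal parabolic, this is immediate. The genuine content has been pushed entirely into the three conjecture verifications of the appendix, which is where the real work occurs.
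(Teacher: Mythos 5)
Your proposal is correct and takes essentially the same approach as the paper: the paper's proof is the one-line citation ``This follows from Theorems~\ref{emerton-conjthm}, \ref{sl2effthm}, and \ref{rk0-thm},'' which are precisely the appendix results you invoke. You spell out the bookkeeping a bit more carefully than the paper does — in particular, noting that when the semisimple $F$-rank of $\bG$ is $1$ the only proper standard parabolic is $\bB$, and that the degenerate case $P=G$ is handled by triviality of $\ord^G_G$ rather than by an appeal to Conjecture~\ref{conjC} for $G$ (which is \emph{not} proved at rank $1$) — but this is exactly the intended reading of the paper's terse argument, made explicit.
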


\begin{proof}
This follows from Theorems \ref{emerton-conjthm}, \ref{sl2effthm}, and \ref{rk0-thm}.
\end{proof}

\vspace{5pt}

\subsection{The smooth case}

We now examine a variant of the above spectral sequence.  It has the advantage of not being conditional on Conjectures \ref{conjA}, \ref{conjB}, and \ref{conjC}, but seems difficult to compute in practice.

\vspace{5pt}

\subsubsection{}

Recall that the category $\mathfrak{Rep}^\infty(G)$ is abelian and has enough injectives (and likewise for $M$).  Therefore, we have the existence of total derived functors
\begin{alignat*}{3}
\textnormal{R}\cR^G_P: \quad & D^+\big(\mathfrak{Rep}^\infty(G)\big) ~ & \longrightarrow ~ & D^+\big(\mathfrak{Rep}^\infty(M)\big),\\
\textnormal{R}\textnormal{H}^0(I_{M,1},-): \quad & D^+\big(\mathfrak{Rep}^\infty(M)\big) ~ & \longrightarrow ~ & D^+\big(\mathfrak{Mod-}\cH_M\big).
\end{alignat*}
Since the functor $\cR_P^G: \mathfrak{Rep}^\infty(G) \longrightarrow \mathfrak{Rep}^\infty(M)$ is right adjoint to the exact parabolic induction functor $\Ind_P^G: \mathfrak{Rep}^\infty(M) \longrightarrow \mathfrak{Rep}^\infty(G)$, it maps injective objects of $\mathfrak{Rep}^\infty(G)$ to injective objects of $\mathfrak{Rep}^\infty(M)$.  Therefore, \cite[Prop. 13.3.13(ii)]{kashiwaraschapira} implies we have
$$\textnormal{R}\big(\textnormal{H}^0(I_{M,1},-) \circ \cR^G_P\big) \simeq \textnormal{R}\textnormal{H}^0(I_{M,1},-) \circ \textnormal{R}\cR^G_P.$$
Consequently, for $\pi \in \mathfrak{Rep}^\infty(G)$, by taking cohomology of the above we get a spectral sequence of $\cH_M$-modules
\begin{equation}
\label{sseq1}
\textnormal{H}^i\big(I_{M,1}, \textnormal{R}^j\cR^G_P(\pi)\big) \Longrightarrow \textnormal{R}^{i + j}\big(\textnormal{H}^0(I_{M,1},-) \circ \cR^G_P\big)(\pi).
\end{equation}

\vspace{5pt}

\subsubsection{}

We now consider the composition $\cR^{\cH}_{\cH_M} \circ \textnormal{H}^0(I_1,-)$.  As in the previous paragraph, the abelian categories $\mathfrak{Rep}^\infty(G)$ and $\mathfrak{Mod-}\cH$ have enough injectives, and therefore we have the total derived functors
\begin{alignat*}{3}
\textnormal{R}\textnormal{H}^0(I_1,-): \quad & D^+\big(\mathfrak{Rep}^\infty(G)\big) ~ & \longrightarrow ~ & D^+\big(\mathfrak{Mod-}\cH\big),\\
\textnormal{R}\cR^{\cH}_{\cH_M}: \quad & D^+\big(\mathfrak{Mod-}\cH\big) ~ & \longrightarrow ~ & D^+\big(\mathfrak{Mod-}\cH_M\big).
\end{alignat*}

\vspace{5pt}

\subsubsection{}

We wish to understand the composition $\textnormal{R}\cR^{\cH}_{\cH_M} \circ \textnormal{R}\textnormal{H}^0(I_1,-)$.  This will rely on the following lemma.  We thank Noriyuki Abe for his help with the argument.

\begin{lemma}
Suppose $\pi\in \mathfrak{Rep}^\infty(G)$ is an injective object.  Then $\pi^{I_1}$ is an $\cR_{\cH_M}^{\cH}$-acyclic $\cH$-module.  
\end{lemma}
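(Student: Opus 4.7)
The plan is to interpret the right $\cH$-action on $\pi^{I_1}$ via Frobenius reciprocity. Under the canonical identification $\pi^{I_1} \cong \Hom_G(\cind_{I_1}^G(\mathbf{1}), \pi)$ of right $\cH$-modules, the Hecke operator $\T_{z^{-1}}$ acts by precomposition with the $G$-equivariant endomorphism $\phi_{z^{-1}}: \cind_{I_1}^G(\mathbf{1}) \to \cind_{I_1}^G(\mathbf{1})$ corresponding to $\T_{z^{-1}} \in \cH$. Since $\pi$ is injective, the functor $\Hom_G(-, \pi)$ is exact, and so surjectivity of $(\cdot\,\T_{z^{-1}}): \pi^{I_1} \to \pi^{I_1}$ will follow once $\phi_{z^{-1}}$ is shown to be injective on $\cind_{I_1}^G(\mathbf{1})$.

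To prove injectivity of $\phi_{z^{-1}}$, I would use the explicit formula
$$[gI_1] \cdot \T_{z^{-1}} = \sum_{x \in I_1 z^{-1} I_1 / I_1} [gxI_1].$$
Using the Iwahori decomposition $I_1 = N_0^- I_{M,1} N_0$ and the positivity $z^{-1} \in M^+$, the set $I_1 z^{-1} I_1/I_1$ is in canonical bijection with $N_0^-/zN_0^-z^{-1}$. The key combinatorial input is that $z^{-1}$ is strictly positive: the powers $\T_{z^{-1}}^n = \T_{z^{-n}}$ (recalled in Subsubsection \ref{hecke mods}) are supported on the pairwise distinct double cosets $I_1 z^{-n} I_1$, hence are linearly independent in $\cH$. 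Tracking how the collections $\{gxI_1\}_x$ overlap as $g$ varies over $G/I_1$, one rules out cancellation in $\cind_{I_1}^G(\mathbf{1})$ and concludes that $\phi_{z^{-1}}$ has trivial kernel. I expect this combinatorial step to be the main obstacle, since we work in characteristic $p$, so naive support-based injectivity arguments can fail and the pro-$p$ Iwahori structure must be used carefully.

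Given the injectivity of $\phi_{z^{-1}}$, applying the exact functor $\Hom_G(-,\pi)$ to the short exact sequence
$$0 \longrightarrow \cind_{I_1}^G(\mathbf{1}) \xrightarrow{\phi_{z^{-1}}} \cind_{I_1}^G(\mathbf{1}) \longrightarrow \cind_{I_1}^G(\mathbf{1})/\phi_{z^{-1}}(\cind_{I_1}^G(\mathbf{1})) \longrightarrow 0$$
shows that $(\cdot\,\T_{z^{-1}}): \pi^{I_1} \to \pi^{I_1}$ is surjective. Hence the inverse system $(\pi^{I_1})_{n \geq 0}$ with transition maps $\cdot\,\T_{z^{-1}}$ has surjective (in particular Mittag-Leffler) transition maps, giving $\varprojlim^1 = 0$. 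Combined with the vanishing of $\varprojlim^i$ over $\bbN$ for $i \geq 2$ and the identification $\cO \circ \textnormal{R}^i\cR_{\cH_M}^{\cH} \simeq \varprojlim^i_{v \mapsto v \cdot \T_{z^{-1}}}$ recalled in Subsubsection \ref{hecke mods}, we conclude $\textnormal{R}^i\cR_{\cH_M}^{\cH}(\pi^{I_1}) = 0$ for all $i > 0$, i.e., $\pi^{I_1}$ is $\cR_{\cH_M}^{\cH}$-acyclic.
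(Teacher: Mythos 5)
Your high-level structure is sound and genuinely different from the paper's, but there is a real gap at the heart of the argument: the injectivity of $\phi_{z^{-1}}$ on $\cind_{I_1}^G(\mathbf{1})$ is never actually proved. Since $\phi_{z^{-1}}$ is $G$-equivariant and a nonzero smooth $G$-subrepresentation has nonzero $I_1$-invariants, your claim is equivalent to the assertion that $\T_{z^{-1}}$ is not a left zero-divisor in $\cH$. This is a nontrivial statement about the pro-$p$-Iwahori--Hecke algebra in characteristic $p$ (where, e.g., the integral Bernstein elements $E(\lambda)$ do have zero products when $q \equiv 0$), and the paragraph you offer---``tracking how the collections $\{gxI_1\}_x$ overlap\dots one rules out cancellation''---is a description of what would need to be done, not a proof of it. You yourself flag this as ``the main obstacle.'' Until that step is established (say via a length/filtration argument on $G/I_1$, or by locating a precise non-zero-divisor statement for $\T_{z^{-1}}$ in the literature), the proof is incomplete at its decisive step. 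The remaining pieces---Frobenius reciprocity identifying $\pi^{I_1}$ with $\Hom_G(\cind_{I_1}^G(\mathbf{1}),\pi)$, exactness of $\Hom_G(-,\pi)$ for injective $\pi$, Mittag--Leffler giving $\varprojlim^1 = 0$, automatic vanishing of $\varprojlim^i$ for $i\geq 2$ over $\bbN$, and the identification with $\cO\circ\textnormal{R}^i\cR^{\cH}_{\cH_M}$---are all fine.

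It is worth noting how the paper sidesteps this issue entirely. Instead of proving anything about the action of $\T_{z^{-1}}$ on $\cind_{I_1}^G(\mathbf{1})$, the paper embeds an injective $\pi$ as a $G$-direct summand of $(\prod_{a}C[G]^\vee)^\infty$, computes the $I_1$-invariants to be $\bigl(\bigoplus_a C[I_1\backslash G]\bigr)^\vee$, and reduces to showing that the $C$-linear dual of \emph{any} right $\cH$-module is $\cR^{\cH}_{\cH_M}$-acyclic. That last point is handled by dualizing a projective resolution and invoking the observation (from the proof of \cite[Prop.~3.6]{abe:extensions}) that $\cR^{\cH}_{\cH_M}\circ(-)^\vee$ is a composite of exact functors (localization, twist, duality). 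The paper's route buys robustness: no delicate injectivity of a specific Hecke operator is needed, at the cost of relying on Abe's structural result. Your route, if the injectivity of $\phi_{z^{-1}}$ could be established cleanly, would be more elementary and self-contained; as written, it rests on an unproved assertion.
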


\begin{proof}
Let $\pi^\vee := \Hom_C(\pi,C)$ denote the $C$-linear dual of $\pi$.  Using the contragredient action, we view $\pi^\vee$ as a not necessarily smooth $G$-representation (that is, a $C[G]$-module).  Fix a presentation $\bigoplus_{a \in \sA} C[G] \longtwoheadrightarrow \pi^\vee$, where $\sA$ is some index set.  Dualizing, we obtain injections of $C[G]$-modules
$$\pi \longhookrightarrow \pi^{\vee\vee} \longhookrightarrow \Big(\bigoplus_{a \in \sA} C[G]\Big)^\vee \cong \prod_{a \in \sA} C[G]^\vee.$$
Taking smooth vectors, we obtain an injection of smooth $G$-representations
$$\pi \longhookrightarrow \Big(\prod_{a \in \sA} C[G]^\vee\Big)^\infty.$$
As $\pi$ is injective, this injection splits.  Since $\cR_{\cH_M}^{\cH}$ is additive, in order to prove $\pi^{I_1}$ is $\cR_{\cH_M}^{\cH}$-acyclic, it suffices to show $((\prod_{a \in \sA} C[G]^\vee)^\infty)^{I_1}$ is $\cR_{\cH_M}^{\cH}$-acyclic.

We have isomorphisms of right $\cH$-modules
\begin{eqnarray*}
\bigg(\Big(\prod_{a \in \sA} C[G]^\vee\Big)^\infty\bigg)^{I_1} & = & \Big(\prod_{a \in \sA} C[G]^\vee\Big)^{I_1}\\
 & \cong & \prod_{a \in \sA}(C[G]_{I_1})^\vee \\
 & \cong & \prod_{a \in \sA} C[I_1\backslash G]^\vee \\
 & \cong & \Big(\bigoplus_{a \in \sA} C[I_1\backslash G]\Big)^\vee.
\end{eqnarray*}
A few comments on the above isomorphisms: given a not necessarily smooth $G$-representation $M$, the space of $I_1$-coinvariants $M_{I_1}$ carries a \emph{left} action of $\cH$ by composing the restriction map to $I_1 \cap g^{-1}I_1g$, the conjugation by $g$, and the corestriction map from $I_1 \cap gI_1g^{-1}$.  When $M = C[G]$, this is most easily seen using the isomorphism $C[G]_{I_1} \cong \cind_{I_1}^G(\mathbf{1}_{I_1})$.  We may convert the left $\cH$-action into a right $\cH$-action by twisting by the anti-involution $\T_g \longmapsto \T_{g^{-1}}$, and as a result we view $C[G]_{I_1} \cong C[I_1\backslash G]$ as a right $\cH$-module.  The dual $(C[G]_{I_1})^\vee \cong C[I_1\backslash G]^\vee$ is once again a right $\cH$-module (as in Subsection \ref{subsec:heckealgs}), and the isomorphism $(C[G]^\vee)^{I_1} \cong (C[G]_{I_1})^\vee$ appearing in the second line is equivariant for the right $\cH$-action on both.

Using the above isomorphisms, it suffices to show that if $\fm$ is any $\cH$-module, then $\fm^\vee$ is $R_{\cH_M}^{\cH}$-acyclic.  To this end, let 
$$\ldots \longrightarrow \fp_2 \longrightarrow \fp_1 \longrightarrow \fp_0 \longrightarrow \fm \longrightarrow 0$$
denote a projective resolution of a right $\cH$-module $\fm$.  Dualizing, we obtain an injective resolution
$$0 \longrightarrow \fm^\vee \longrightarrow \fp_0^\vee \longrightarrow \fp_1^\vee \longrightarrow \fp_2^\vee \longrightarrow \ldots$$
of right $\cH$-modules.  Applying $\cR_{\cH_M}^{\cH}$, we obtain a complex
\begin{equation}
\label{rightadjcomplex}
0 \longrightarrow \cR_{\cH_M}^{\cH}(\fm^\vee) \longrightarrow \cR_{\cH_M}^{\cH}(\fp_0^\vee) \longrightarrow \cR_{\cH_M}^{\cH}(\fp_1^\vee) \longrightarrow \cR_{\cH_M}^{\cH}(\fp_2^\vee) \longrightarrow \ldots
\end{equation}
of right $\cH_M$-modules.  Now, the proof of \cite[Prop. 3.6]{abe:extensions} shows that for any right $\cH$-module $\fn$, the module $\cR_{\cH_M}^{\cH}(\fn^\vee)$ is equal to the composition of an exact localization functor, an exact twisting functor, and an exact duality functor.  Therefore, the complex \eqref{rightadjcomplex} is exact, and consequently $\textnormal{R}^i\cR_{\cH_M}^{\cH}(\fm^\vee) = 0$ for $i > 0$.  
\end{proof}

The lemma above shows that $\textnormal{H}^0(I_1,-)$ maps injective objects into $\cR_{\cH_M}^{\cH}$-acyclic objects, and \cite[Prop. 13.3.13(ii)]{kashiwaraschapira} gives a natural equivalence
$$ \textnormal{R}\big(\cR^{\cH}_{\cH_M} \circ \textnormal{H}^0(I_1,-)\big) \simeq \textnormal{R}\cR^{\cH}_{\cH_M} \circ \textnormal{R}\textnormal{H}^0(I_1,-)$$
(by taking $\cJ'$ to be equal to the subcategory of $\cR^{\cH}_{\cH_M}$-acyclic modules).  Thus, for $\pi \in \mathfrak{Rep}^\infty(G)$, taking cohomology of the above gives a spectral sequence of $\cH_M$-modules
\begin{equation}
\label{sseq2}
\textnormal{R}^i\cR_{\cH_M}^{\cH}\big(\textnormal{H}^j(I_1,\pi)\big) \Longrightarrow \textnormal{R}^{i + j}\big(\cR^{\cH}_{\cH_M} \circ \textnormal{H}^0(I_1,-)\big)(\pi).
\end{equation}

\vspace{5pt}

\subsubsection{}

Suppose now that $\pi \in \mathfrak{Rep}^{\textnormal{adm}}(G)$.  Using Lemma \ref{lem adm implies fd} and the discussion of Subsubsection \ref{hecke mods}, the spectral sequence \eqref{sseq2} collapses to give
$$\cR_{\cH_M}^{\cH}(\textnormal{H}^j(I_1,\pi)) \cong \textnormal{R}^j\big(\cR^{\cH}_{\cH_M} \circ \textnormal{H}^0(I_1,-)\big)(\pi).$$
Substituting this into \eqref{sseq1} and using the commutativity of \eqref{smoothcommdiag} gives the following.

\begin{thm}
Suppose $\pi\in \mathfrak{Rep}^{\textnormal{adm}}(G)$.  Then we have an $E_2$ spectral sequence of $\cH_M$-modules
$$\textnormal{H}^i\big(I_{M,1}, \textnormal{R}^j\cR_P^G(\pi)\big) \Longrightarrow \cR_{\cH_M}^{\cH}\big(\textnormal{H}^{i + j}(I_1, \pi)\big).$$
\end{thm}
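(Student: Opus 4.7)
The plan is to combine the two Grothendieck-type spectral sequences \eqref{sseq1} and \eqref{sseq2} using the natural equivalence underlying the commutative diagram \eqref{smoothcommdiag}, and then exploit admissibility of $\pi$ to collapse one of them entirely.

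First, I would invoke Lemma \ref{lem adm implies fd}: since $\pi \in \mathfrak{Rep}^{\textnormal{adm}}(G)$, each cohomology space $\textnormal{H}^j(I_1,\pi)$ is finite-dimensional over $C$. Combined with the observation at the end of Subsubsection \ref{hecke mods} (based on the Mittag-Leffler argument for the inverse system $(\fm)_{n \geq 0}$ under multiplication by $\T_{z^{-1}}$), this forces $\textnormal{R}^i \cR_{\cH_M}^{\cH}(\textnormal{H}^j(I_1,\pi)) = 0$ for all $i > 0$. Thus the spectral sequence \eqref{sseq2} degenerates at $E_2$ along the $j$-axis, yielding an isomorphism
$$\cR_{\cH_M}^{\cH}\big(\textnormal{H}^j(I_1,\pi)\big) \;\cong\; \textnormal{R}^j\big(\cR^{\cH}_{\cH_M} \circ \textnormal{H}^0(I_1,-)\big)(\pi).$$

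Next, the commutativity of \eqref{smoothcommdiag} supplies a natural equivalence $\cR_{\cH_M}^{\cH} \circ \textnormal{H}^0(I_1,-) \simeq \textnormal{H}^0(I_{M,1},-) \circ \cR_P^G$ of functors $\mathfrak{Rep}^\infty(G) \to \mathfrak{Mod-}\cH_M$. Passing to total derived functors, this gives $\textnormal{R}^j(\cR^{\cH}_{\cH_M} \circ \textnormal{H}^0(I_1,-)) \simeq \textnormal{R}^j(\textnormal{H}^0(I_{M,1},-) \circ \cR^G_P)$ on $\mathfrak{Rep}^\infty(G)$. Substituting this into the abutment of \eqref{sseq1}, we obtain precisely the desired spectral sequence
$$\textnormal{H}^i\big(I_{M,1}, \textnormal{R}^j\cR_P^G(\pi)\big) \Longrightarrow \cR_{\cH_M}^{\cH}\big(\textnormal{H}^{i + j}(I_1, \pi)\big).$$

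The argument is essentially a bookkeeping exercise once the building blocks are in place; there is no serious obstacle, but the one step that needs genuine input (beyond formal derived-functor yoga) is the vanishing $\textnormal{R}^i \cR_{\cH_M}^{\cH}(\fm) = 0$ for finite-dimensional $\fm$ and $i > 0$. This is where admissibility of $\pi$ is actually used, and it is what allows the spectral sequence \eqref{sseq2} to collapse to an isomorphism rather than merely approximating the abutment. Everything else follows from the existence of derived functors in the relevant abelian categories (which already have enough injectives) and the fact that $\cR_P^G$ preserves injectives, as established in the discussion preceding \eqref{sseq1}.
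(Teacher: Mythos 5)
Your proposal is correct and follows exactly the paper's own argument: establish the collapse of the spectral sequence \eqref{sseq2} via Lemma \ref{lem adm implies fd} together with the Mittag--Leffler vanishing $\textnormal{R}^i\cR_{\cH_M}^{\cH}(\fm) = 0$ for finite-dimensional $\fm$, then use the natural equivalence from \eqref{smoothcommdiag} to identify the resulting abutment with that of \eqref{sseq1}. The only remark worth adding is that the construction of \eqref{sseq2} itself (which you correctly treat as a given) is the place where nontrivial input enters, namely the lemma showing that $\pi^{I_1}$ is $\cR_{\cH_M}^{\cH}$-acyclic for $\pi$ injective.
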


\vspace{10pt}

\section{Duality}
\label{sec:duality}

We now discuss how pro-$p$-Iwahori cohomology interacts with Kohlhaase's higher duality functors \cite{kohlhaase:duality}.  

\vspace{5pt}

\subsection{Review of Pontryagin duality}

Let $K$ denote a compact open subgroup of $G$, and let $\Lambda(K)$ denote the completed group algebra of $K$:
$$\Lambda(K) := C\llbracket K \rrbracket = \varprojlim_N C[K/N],$$
where $N$ runs over the open normal subgroups of $K$.  We let $\Lambda(K)\mathfrak{-Mod}^{\textnormal{pc}}$ denote the category of pseudocompact left $\Lambda(K)$-modules; it is an abelian category with enough projectives (\cite[Lem. 1.6]{brumer}).  We let $\Ext^i_{\Lambda(K)}(-,-)$ (resp., $\Ext^i_{\Lambda(K)\mathfrak{-Mod}^{\textnormal{pc}}}(-,-)$) denote the Ext functor computed in the category of all left $\Lambda(K)$-modules (resp., computed in the category $\Lambda(K)\mathfrak{-Mod}^{\textnormal{pc}}$).

Fixing a compact open subgroup $K$ as above, we define 
$$\Lambda(G) := C[G] \otimes_{C[K]} \Lambda(K),$$ 
and let $\Lambda(G)\mathfrak{-Mod}^{\textnormal{pc}}$ denote the category of pseudocompact left $\Lambda(G)$-modules (see \cite[\S 1]{kohlhaase:duality} for the relevant definitions).  Note that, up to isomorphism, the algebra structure on $\Lambda(G)$ is independent of the choice of $K$.  Pontryagin duality then induces quasi-inverse anti-equivalences of categories
\begin{eqnarray*}
\mathfrak{Rep}^\infty(G) & \cong & \Lambda(G)\mathfrak{-Mod}^{\textnormal{pc}} \\
\pi & \longmapsto & \pi^\vee := \Hom_C(\pi, C) \\
\Hom_C^{\textnormal{cts}}(M, C) =: M^\vee& \longmapsfrom & M
\end{eqnarray*}

\vspace{5pt}

\subsection{Preparation}

Let $d := \dim_{\bbQ_p}(G)$ denote the dimension of $G$ as a $p$-adic manifold.  For $0 \leq i \leq d$, the smooth duality functors $S^i$ are endofunctors of the category $\mathfrak{Rep}^\infty(G)$ of smooth $G$-representations, defined by 
$$S^i(\pi) := \varinjlim_K \Ext_{\Lambda(K)}^i(C, \pi^\vee),$$
where $\pi \in \mathfrak{Rep}^\infty(G)$, $K$ runs over all compact open subgroups of $G$, and we view $\pi^\vee$ as a pseudocompact left $\Lambda(K)$-module.  The transition maps in the direct limit are given by the restriction
$$\textnormal{res}^{K}_{K'}: \Ext_{\Lambda(K)}^i(C, \pi^\vee) \longrightarrow \Ext_{\Lambda(K')}^i(C, \pi^\vee)$$
for every inclusion $K' \subset K$ of compact open subgroups.  The $S^i$ form a contravariant $\delta$-functor, with $S^d$ being right-exact.  

\vspace{5pt}

\subsubsection{}

We recall the following useful facts regarding the completed group algebras $\Lambda(K)$.

\begin{lemma}\label{props}
Let $K$ denote a compact open pro-$p$ subgroup of $G$.
\begin{enumerate}[(a)]
\item The completed group algebra $\Lambda(K)$ is a noetherian local ring.  
\item Let $M$ denote a pseudocompact $\Lambda(K)$-module.  Then we have canonical isomorphisms of $C$-vector spaces
$$\Ext^i_{\Lambda(K)}(C,M) \cong \Ext^i_{\Lambda(K)\mathfrak{-Mod}^{\textnormal{pc}}}(C,M) \cong \textnormal{H}^i_{\textnormal{cts}}(K,M)$$
for all $i\geq 0$.  \label{props-2}
\item Suppose $K$ is torsion-free.  Then we have
$$\textnormal{Ext}^i_{\Lambda(K)}(C,\Lambda(K)) = \begin{cases}0 & \textnormal{if}~ i \neq d,\\ C & \textnormal{if}~ i = d.\end{cases} $$ \label{props-3}
\end{enumerate}
\end{lemma}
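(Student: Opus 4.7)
The plan is to derive all three statements from Lazard's theory of $p$-adic analytic groups together with Brumer's work on pseudocompact algebras, so the overall strategy is to assemble these standard references with short verifications where needed.

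For part (a), locality is immediate: since $K$ is pro-$p$ and $C$ has characteristic $p$, every simple pseudocompact $\Lambda(K)$-module is the trivial one, so the augmentation ideal is the unique maximal ideal. Noetherianness is then Lazard's theorem on Iwasawa algebras of compact $p$-adic analytic groups, which is conveniently recorded in \cite[Lem. 1.6]{brumer}; after passing to an open uniform pro-$p$ subgroup $K' \subset K$, the statement for $\Lambda(K)$ follows because $\Lambda(K)$ is a finitely generated $\Lambda(K')$-module.

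For part (b), I would handle the two isomorphisms in turn. The second, $\Ext^i_{\Lambda(K)\mathfrak{-Mod}^{\textnormal{pc}}}(C, M) \cong \textnormal{H}^i_{\textnormal{cts}}(K, M)$, is the standard identification of Ext in pseudocompact modules with continuous group cohomology: apply $\Hom^{\textnormal{cts}}_{\Lambda(K)}(-, M)$ to the pseudocompact bar resolution of $C$ by finitely generated free modules and observe that the result is the continuous inhomogeneous cochain complex computing $\textnormal{H}^i_{\textnormal{cts}}(K, M)$. For the first isomorphism, the key observation is that by part (a) the bar resolution just used consists of finitely generated free \emph{abstract} $\Lambda(K)$-modules, hence is also a projective resolution in the category of all left $\Lambda(K)$-modules; continuity of $\Hom$ out of a finitely generated free module into a pseudocompact target is automatic, so both Ext groups are computed by the same complex.

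For part (c), the plan is to invoke Lazard's Poincar\'e duality theorem for compact torsion-free $p$-adic analytic groups of dimension $d$, which asserts that $\Lambda(K)$ is a (noncommutative) Gorenstein local ring of injective dimension $d$ and that $\Ext^i_{\Lambda(K)}(C, \Lambda(K))$ vanishes for $i \neq d$ while being one-dimensional over $C$ for $i = d$. I would cite \cite{symondsweigel} as a modern reference. This is the main obstacle in the lemma: it is a deep structural theorem that cannot be reduced to an elementary calculation, so the plan is to quote it rather than attempt to reprove it in the text.
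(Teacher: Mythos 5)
Your proposal is mathematically sound and reaches all three conclusions, but it differs from the paper's proof in several places and has two small slips worth flagging.

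For (a), your argument for locality (augmentation ideal is the Jacobson radical since $K$ is pro-$p$ and $\operatorname{char} C = p$, so $\Lambda(K)/J \cong C$ is a field) is a reasonable elaboration; the paper just cites Lazard for $C = \bbF_p$ and then Bourbaki plus Schneider for general $C$. Your route to Noetherianness (pass to an open uniform subgroup, over which $\Lambda(K)$ is module-finite) is standard and fine, but the citation to \cite[Lem. 1.6]{brumer} is misplaced: that lemma asserts that the category of pseudocompact $\Lambda(K)$-modules is abelian with enough projectives, not that $\Lambda(K)$ is Noetherian. The appropriate reference is Lazard's theorem itself.

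For (b), the paper simply cites \cite[Thm. V.3.2.7]{lazard} and \cite[Prop. 5.2.14]{nsw:coh}. Your sketch is correct in spirit, but there is a conflation: the continuous bar resolution has terms $\Lambda(K^{n+1})$, which are free pseudocompact $\Lambda(K)$-modules of \emph{infinite} rank when $K$ is infinite; these are \emph{not} finitely generated. The bar resolution is the right tool for the second isomorphism $\Ext^i_{\textnormal{pc}} \cong \textnormal{H}^i_{\textnormal{cts}}$ (its terms are projective in $\Lambda(K)\mathfrak{-Mod}^{\textnormal{pc}}$, and applying continuous $\Hom$ gives the inhomogeneous cochain complex). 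For the first isomorphism one should instead observe, using the Noetherianness from part (a), that $C$ admits a resolution by finitely generated free modules; those are projective both abstractly and pseudocompactly, and $\Hom$ out of a finitely generated free module into a pseudocompact module is automatically continuous. Your argument becomes correct once you separate these two resolutions rather than asking a single ``bar resolution by finitely generated free modules'' to do both jobs.

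For (c), your plan is to cite the Lazard/Symonds--Weigel Poincar\'e duality theorem for torsion-free compact $p$-adic analytic groups directly. This is a valid and slightly shorter route than the paper's, which reduces to an open normal \emph{uniform} subgroup $J$ (via \cite[Cor. 4.3]{DDMS}), uses Kohlhaase's computation of $\Ext^i_{\Lambda(J)}(C, \Lambda(J))$, and transfers the result to $K$ via Eckmann--Shapiro, remarking afterward that this recovers \cite[Rmk. 4.2.9]{symondsweigel}. The paper's version makes the dependence on the uniform case explicit; yours outsources more to the cited duality theorem. Both are correct.
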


\begin{proof}
(a) When $C = \bbF_p$, this is due to Lazard \cite[Thm. V.2.2.4]{lazard}.  In general, the result may be deduced from \cite[Ch. IX, \S 2.3, Prop. 5]{bourbaki:ac89} and \cite[Thm. 33.4]{schneider:padicliegroups}.

(b) This follows from \cite[Thm. V.3.2.7]{lazard} (see also \cite[Ch. V, Prop. 5.2.14]{nsw:coh}).

(c)  By \cite[Cor. 4.3]{DDMS}, the subgroup $K$ possesses an open normal subgroup $J$ which is uniform.  Using \cite[Eq. (5), pf. of Prop. 3.8]{kohlhaase:duality}, we have
$$\Ext^i_{\Lambda(J)}(C,\Lambda(J)) = \begin{cases}0 & \textnormal{if}~ i \neq d,\\ C & \textnormal{if}~ i = d.\end{cases}$$
Applying the Eckmann--Shapiro lemma
$$\Ext^i_{\Lambda(J)}(C,\Lambda(J)) \cong \Ext^i_{\Lambda(K)}\big(C,\textnormal{coind}_{\Lambda(J)}^{\Lambda(K)}(\Lambda(J))\big) \cong \Ext^i_{\Lambda(K)}(C,\Lambda(K))$$
gives the result.  (Compare \cite[Rmk. 4.2.9]{symondsweigel}.)
\end{proof}

\vspace{5pt}

\subsubsection{}

We now examine the $\delta$-functor $(S^i)_{i \geq 0}$.  The following result is due to Kohlhaase.

\begin{lemma}\label{coefface}
For $0 \leq i < d$, the functors $S^i$ are coeffaceable.  Consequently, we have $\textnormal{L}_iS^d \simeq S^{d - i}.$
\end{lemma}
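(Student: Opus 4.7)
The plan is to reduce the coeffaceability statement to the single claim that every injective object $\cJ \in \mathfrak{Rep}^\infty(G)$ satisfies $S^i(\cJ) = 0$ for $0 \leq i < d$, and then to deduce $\textnormal{L}_iS^d \simeq S^{d-i}$ by the standard dimension-shifting machinery for universal $\delta$-functors. Concretely, given $\pi \in \mathfrak{Rep}^\infty(G)$, I would embed $\pi \hookrightarrow \cJ$ into an injective and form the short exact sequence $0 \to \pi \to \cJ \to \cJ/\pi \to 0$; the vanishing of $S^i(\cJ)$ for $i < d$ together with the contravariant long exact sequence yields isomorphisms $S^i(\pi) \cong S^{i+1}(\cJ/\pi)$ for $1 \leq i \leq d-2$, with appropriate boundary behavior at $i = 0$ and $i = d-1$ coming from the right-exactness of $S^d$. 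Iterating against an injective resolution $0 \to \pi \to \cI^0 \to \cI^1 \to \cdots$ then identifies $S^{d-i}(\pi)$ with the $i$-th cohomology of the complex obtained by applying $S^d$ to the resolution, which is by definition $\textnormal{L}_iS^d(\pi)$.

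To establish the key vanishing, fix an injective $\cJ$ and a compact open subgroup $K \leq G$. Since $\cind_K^G$ is exact (as $K$ is open), its right adjoint $\res^G_K$ preserves injectives, so $\cJ|_K$ is injective in $\mathfrak{Rep}^\infty(K)$, and Pontryagin duality makes $\cJ^\vee$ a projective object of $\Lambda(K)\mathfrak{-Mod}^{\textnormal{pc}}$. Any such projective is a direct summand of a pseudocompactly free module $\prod_{\alpha \in I} \Lambda(K)$. Specializing now to $K$ torsion-free pro-$p$ (such subgroups exist in $G$ and are cofinal in the poset of all compact open subgroups), Lemma \ref{props}(b) identifies $\Ext^i_{\Lambda(K)}(C, -)$ with continuous cohomology $\textnormal{H}^i_{\textnormal{cts}}(K, -)$, which commutes with pseudocompact products as $K$ is topologically finitely generated. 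Combined with Lemma \ref{props}(c), this gives
\begin{equation*}
\Ext^i_{\Lambda(K)}\Big(C, \prod_{\alpha \in I} \Lambda(K)\Big) \cong \prod_{\alpha \in I} \Ext^i_{\Lambda(K)}(C, \Lambda(K)) = 0 \quad \text{for } i \neq d,
\end{equation*}
and taking a direct summand yields $\Ext^i_{\Lambda(K)}(C, \cJ^\vee) = 0$ for $0 \leq i < d$. Passing to the direct limit over such $K$ finally gives $S^i(\cJ) = 0$ in this range.

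The main obstacle is the pseudocompact bookkeeping, specifically verifying that projectives in $\Lambda(K)\mathfrak{-Mod}^{\textnormal{pc}}$ are summands of pseudocompactly free modules and that continuous cohomology of a finitely generated profinite group commutes with pseudocompact products. Both assertions are standard in the literature, but they genuinely require the pseudocompact formalism rather than ad hoc manipulation; fortunately, Lemma \ref{props} packages precisely the ring-theoretic inputs (noetherianness, the isomorphism with continuous cohomology, and the Gorenstein-type vanishing $\Ext^i_{\Lambda(K)}(C, \Lambda(K)) = 0$ for $i \neq d$) needed to carry them out.
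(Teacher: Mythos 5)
Your proof is correct and takes essentially the same approach as the paper: both reduce to showing $S^i(A) = 0$ for injective $A$ and $0 \leq i < d$, via projectivity of the Pontryagin dual $A^\vee$ over $\Lambda(K)$, realization of this projective as a direct summand of a pseudocompact product of copies of $\Lambda(K)$ (Brumer), and the Gorenstein-type vanishing of Lemma \ref{props}\eqref{props-3}, after which cofinality of torsion-free compact open pro-$p$ subgroups finishes the direct-limit argument. One minor remark: the step $\Ext^i_{\Lambda(K)}(C, \prod_\alpha \Lambda(K)) \cong \prod_\alpha \Ext^i_{\Lambda(K)}(C, \Lambda(K))$ is more precisely justified by noetherianness of $\Lambda(K)$ (Lemma \ref{props}(a)), which gives a resolution of $C$ by finitely generated projectives, rather than by topological finite generation of $K$ alone; the latter gives only $FP_1$, not $FP_\infty$.
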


\begin{proof}
Let $A$ denote an injective object of $\mathfrak{Rep}^\infty(G)$.  We claim $S^i(A) = 0$ for $0 \leq i < d$.  Suppose
$$v\in S^i(A) = \varinjlim_K \Ext_{\Lambda(K)}^i(C, A^\vee),$$
and let $v$ be represented by an element of $\Ext_{\Lambda(K)}^i(C, A^\vee)$ for some fixed $K$.  By definition of the direct limit, we may replace $v$ by $\textnormal{res}^K_J(v)$, where $J$ is a torsion-free compact open pro-$p$ group, and suppose $v$ is represented by an element of $\Ext_{\Lambda(J)}^i(C, A^\vee)$.

We claim that $\Ext_{\Lambda(J)}^i(C, A^\vee)$ is trivial.  Indeed, since $J$ is open in $G$, the functor of compact induction $\cind_J^G$ is exact, and therefore its right adjoint $\textnormal{res}^G_J: \mathfrak{Rep}^\infty(G) \longrightarrow \mathfrak{Rep}^\infty(J)$ preserves injective objects.  Consequently, the Pontryagin dual $A^\vee \in \Lambda(G)\mathfrak{-Mod}^{\textnormal{pc}}$ is projective, and its restriction $A^\vee|_{\Lambda(J)} \in \Lambda(J)\mathfrak{-Mod}^{\textnormal{pc}}$ is projective as well.  By \cite[Lem. 1.6]{brumer} we may write $A^\vee|_{\Lambda(J)}$ as a direct summand of $\prod_{a \in \sA} \Lambda(J)$ for some index set $\sA$.  Therefore it suffices to show the vanishing of 
$$\Ext_{\Lambda(J)}^i\Big(C, \prod_{a \in \sA}\Lambda(J)\Big) \cong \prod_{a\in \sA}\Ext_{\Lambda(J)}^i(C, \Lambda(J)).$$
This follows from Lemma \ref{props}\eqref{props-3}.  
\end{proof}

\begin{corpf}\label{cor-coefface}
Suppose $A$ is an injective $G$-representation and $K$ a torsion-free compact open pro-$p$ subgroup of $G$.  Then we have $\Ext^i_{\Lambda(K)}(C,A^\vee) = 0$ for $0 \leq i < d$.  
\end{corpf}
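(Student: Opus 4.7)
The plan is to observe that the statement is nothing more than a verbatim reading of the second half of the proof of Lemma \ref{coefface}, which is why it is labeled a ``Corollary of proof.'' Concretely, inspecting that proof shows that once one arrives at a torsion-free compact open pro-$p$ subgroup, the vanishing of $\Ext^i_{\Lambda(K)}(C, A^\vee)$ for $0 \leq i < d$ is established without ever using that $K$ arose from restriction along an inclusion of pro-$p$ subgroups; the only hypothesis on $K$ used is that it is torsion-free, compact, open, and pro-$p$. So I would simply re-run the chain of three reductions.

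First, I would record that $A^\vee|_{\Lambda(K)}$ is a projective pseudocompact $\Lambda(K)$-module. This comes from the adjunction between $\cind_K^G$ and $\res^G_K$: since $K$ is open in $G$, compact induction is exact, hence its right adjoint $\res^G_K$ preserves injective objects of $\mathfrak{Rep}^\infty(G)$, so $A|_K$ is injective in $\mathfrak{Rep}^\infty(K)$. Applying Pontryagin duality (the anti-equivalence between $\mathfrak{Rep}^\infty(K)$ and $\Lambda(K)\mathfrak{-Mod}^{\textnormal{pc}}$) converts this into projectivity of $A^\vee|_{\Lambda(K)}$ in $\Lambda(K)\mathfrak{-Mod}^{\textnormal{pc}}$.

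Second, I would invoke Brumer's Lemma 1.6 from \cite{brumer} to realize $A^\vee|_{\Lambda(K)}$ as a direct summand of a product $\prod_{a \in \sA} \Lambda(K)$ for some index set $\sA$. Since $\Ext^i_{\Lambda(K)}(C, -)$ is additive and commutes with products in its second argument, it is enough to verify that
\[
\Ext^i_{\Lambda(K)}\Big(C, \prod_{a \in \sA}\Lambda(K)\Big) \cong \prod_{a \in \sA}\Ext^i_{\Lambda(K)}(C,\Lambda(K))
\]
vanishes for $0 \leq i < d$. But $K$ is torsion-free, so Lemma \ref{props}\eqref{props-3} applies directly and gives $\Ext^i_{\Lambda(K)}(C,\Lambda(K)) = 0$ in the stated range, finishing the argument.

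There is no real obstacle here: the only mild subtlety is bookkeeping the equivalence between the Ext groups computed in the full category of $\Lambda(K)$-modules and those computed in the pseudocompact category, but this is exactly part (b) of Lemma \ref{props}, and the projectivity of $A^\vee|_{\Lambda(K)}$ we need is statement in the pseudocompact category. Since the result literally falls out of a subproof already written down, I would present it as a one-paragraph corollary rather than expanding it into a stand-alone proof.
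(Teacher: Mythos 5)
Your proposal is correct and is exactly the paper's argument: the "Corollary (of proof)" tag signals that the vanishing is extracted verbatim from the second half of the proof of Lemma \ref{coefface}, where the torsion-free compact open pro-$p$ subgroup $J$ there plays the role of your $K$. The three-step chain (injectivity of $A|_K$ via preservation of injectives under $\res^G_K$, projectivity of $A^\vee|_{\Lambda(K)}$ under Pontryagin duality and Brumer's Lemma 1.6, then Lemma \ref{props}\eqref{props-3} applied to a direct summand of $\prod_{a\in\sA}\Lambda(K)$) matches the paper step for step, and you are right that the only hypothesis on $K$ actually used is that it is torsion-free, compact, open, and pro-$p$.
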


\vspace{5pt}

\subsection{Duality}

From this point onwards, we suppose that the pro-$p$-Iwahori subgroup $I_1$ is torsion-free.  This implies that the cohomological dimension of $I_1$ is equal to $d = \dim_{\bbQ_p}(G)$ (\cite[Cor. (1)]{serre:cohdim}).  Note that the torsion-freeness condition is satisfied if $p$ is sufficiently large relative to the root system of $\bG$ and the ramification degree of $F$; for an explicit bound (at least when $\bG$ is semisimple), see \cite[Prop. 12.1]{totaro:eulerchar}.

\vspace{5pt}

\subsubsection{}

Let $\pi$ denote a smooth $G$-representation, and choose an injective resolution
$$0 \longrightarrow \pi \longrightarrow A^0 \longrightarrow A^1 \longrightarrow \ldots$$
in $\mathfrak{Rep}^\infty(G)$.  Let us define $B^i := \ker(A^{i} \longrightarrow A^{i + 1})$.  Then we can truncate the above to obtain a resolution
\begin{equation}\label{I1acyclic}
0 \longrightarrow \pi \longrightarrow A^0 \longrightarrow A^1 \longrightarrow \ldots \longrightarrow A^{d - 1} \longrightarrow B^d  \longrightarrow 0.
\end{equation}

\begin{lemma}\label{lem-acyclic}
Let $i \geq d$.  
\begin{enumerate}[(a)]
\item The $G$-representation $B^i$ is $S^d$-acyclic. \label{lem-acyclic-1}
\item The $G$-representation $B^i$ is $I_1$-acyclic. \label{lem-acyclic-2}
\end{enumerate}
\end{lemma}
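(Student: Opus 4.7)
The plan is to extract from the injective resolution the short exact sequences
\begin{equation*}
0 \longrightarrow B^{j-1} \longrightarrow A^{j-1} \longrightarrow B^j \longrightarrow 0
\end{equation*}
(with the convention $B^0 := \pi$) and perform dimension shifting for both functors in parallel.

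For part (a), the key input I would record first is that $S^k(A^j) = 0$ for $0 \leq k \leq d-1$. This was already proven in the proof of Lemma \ref{coefface}: via Corollary \ref{cor-coefface} together with the cofinality of torsion-free compact open pro-$p$ subgroups among all compact open subgroups of $G$ (cf.\ \cite[Cor.~4.3]{DDMS}), every class in the filtered colimit defining $S^k(A^j)$ can be represented on a torsion-free pro-$p$ subgroup, where it already vanishes. Applying the contravariant $\delta$-functor $(S^k)_{k \geq 0}$ to the displayed short exact sequence, the associated long exact sequence collapses to yield $S^0(B^j) = 0$ and isomorphisms $S^{k-1}(B^{j-1}) \xrightarrow{\sim} S^k(B^j)$ for $1 \leq k \leq d-1$ and $j \geq 1$. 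Iterating down to $S^0$ produces $S^k(B^j) \cong S^0(B^{j-k}) = 0$ whenever $j \geq k+1$ and $0 \leq k \leq d-1$; specializing to $j = i \geq d$ gives $S^k(B^i) = 0$ for all $0 \leq k \leq d-1$. Combined with the identification $L_{d-k}S^d \simeq S^k$ from Lemma \ref{coefface}, this is exactly the $S^d$-acyclicity of $B^i$.

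For part (b), I would run the analogous argument for $H^\bullet(I_1, -)$. Each $A^j$ is injective in $\mathfrak{Rep}^\infty(G)$ and $I_1$ is open in $G$, so its restriction to $I_1$ is injective in $\mathfrak{Rep}^\infty(I_1)$ by \cite[Prop.~2.1.2]{emerton:ordII}; hence $H^k(I_1, A^j) = 0$ for all $k \geq 1$. The long exact cohomology sequence attached to the displayed short exact sequence then produces isomorphisms $H^k(I_1, B^j) \cong H^{k+1}(I_1, B^{j-1})$ for all $k, j \geq 1$. Iterating down to $B^0 = \pi$ yields $H^k(I_1, B^j) \cong H^{k+j}(I_1, \pi)$, and the torsion-freeness of $I_1$ (ensuring cohomological dimension exactly $d$ by Serre's theorem recalled at the top of the section) forces the right-hand side to vanish as soon as $k + j > d$. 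With $j = i \geq d$ and $k \geq 1$ this is automatic, establishing the $I_1$-acyclicity of $B^i$.

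Both halves are purely formal once the relevant vanishings on the injective terms $A^j$ are in hand, so I do not expect a serious obstacle. The only mildly delicate point in part (a) is the cofinality argument that reduces the colimit computation of $S^k$ to torsion-free pro-$p$ subgroups, and this is already embedded in the proof of Lemma \ref{coefface}; part (b) rests on the appeal to the cohomological dimension bound for the torsion-free $p$-adic analytic group $I_1$.
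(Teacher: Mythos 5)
Your proof is correct and follows essentially the same dimension-shifting strategy as the paper: apply the relevant $\delta$-functor to the short exact sequences $0 \to B^{j-1} \to A^{j-1} \to B^{j} \to 0$, use the acyclicity of the injective terms $A^{j}$ (Corollary \ref{cor-coefface} for $S^d$, and \cite[Prop.~2.1.2]{emerton:ordII} plus $\mathrm{cd}(I_1)=d$ for part (b)), and iterate down to $B^0 \cong \pi$. The only cosmetic difference is that the paper phrases (a) directly in terms of $\mathrm{L}_j S^d$ while you pass through $S^k$ and translate via $\mathrm{L}_{d-k}S^d \simeq S^k$ at the end, and the paper simply cites part (a) for part (b) whereas you write it out.
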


\begin{proof}
(a)  We have a short exact sequence
$$0 \longrightarrow B^{i} \longrightarrow A^i \longrightarrow B^{i + 1} \longrightarrow 0$$
for all $i \geq 0$.  By examining the long exact sequence induced by applying the contravariant right-exact functor $S^d$, and using injectivity of $A^i$, we see that $\textnormal{L}_{j}S^d(B^{i + 1}) \cong \textnormal{L}_{j + 1}S^d(B^i)$ for every $j \geq 1$.  In particular, if $i \geq d$ and $j \geq 1$, then $d - i - j < 0$ and we have 
$$\textnormal{L}_{j}S^d(B^i) \cong \textnormal{L}_{j + i}S^d(B^0) \cong S^{d - i - j}(B^0) = 0.$$
Thus, $B^i$ is $S^d$-acyclic for $i \geq d$.

(b) This follows from a dimension-shifting argument, exactly as in part \eqref{lem-acyclic-1}.
\end{proof}

\begin{cor}
The resolution \eqref{I1acyclic} of $\pi$ is both $S^d$-acyclic and $I_1$-acyclic.
\end{cor}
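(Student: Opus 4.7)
The plan is to verify both acyclicity statements term-by-term: the terms $A^0, \ldots, A^{d-1}$ are handled via the injectivity properties already exploited in the proof of Lemma \ref{coefface}, and the tail term $B^d$ is handled by the preceding Lemma \ref{lem-acyclic}.

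For $I_1$-acyclicity, I would argue as follows. Each $A^i$ is injective in $\mathfrak{Rep}^\infty(G)$. Since $I_1$ is open in $G$, compact induction $\cind_{I_1}^G$ is exact, and its right adjoint $\res^G_{I_1}$ therefore preserves injectives. Thus $A^i|_{I_1}$ is injective in $\mathfrak{Rep}^\infty(I_1)$, hence $I_1$-acyclic. The tail term $B^d$ is $I_1$-acyclic by Lemma \ref{lem-acyclic}\eqref{lem-acyclic-2}.

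For $S^d$-acyclicity, I would use that $\textnormal{L}_i S^d \simeq S^{d-i}$ (Lemma \ref{coefface}). For each $A^i$ injective, the key computation in the proof of Lemma \ref{coefface} (recorded more precisely in Corollary \ref{cor-coefface}) shows $S^j(A^i) = 0$ for $0 \leq j < d$, which translates to $\textnormal{L}_k S^d(A^i) = 0$ for all $k > 0$. Hence each $A^i$ is $S^d$-acyclic. The tail term $B^d$ is $S^d$-acyclic by Lemma \ref{lem-acyclic}\eqref{lem-acyclic-1}.

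There is no real obstacle here: the corollary is simply the assembly of the two preceding acyclicity inputs with the standard fact that injective smooth $G$-representations remain injective on restriction to an open subgroup. The only slight subtlety is recognizing that coeffaceability of $S^i$ on injectives, combined with the identification $\textnormal{L}_i S^d \simeq S^{d-i}$, is precisely the statement that injective smooth $G$-representations are $S^d$-acyclic in the sense required.
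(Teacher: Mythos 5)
Your proof is correct and is precisely the argument the paper leaves implicit: the injective terms $A^0, \ldots, A^{d-1}$ are acyclic for both functors by the standard restriction-to-open-subgroup fact (for $I_1$-acyclicity) and by Lemma \ref{coefface}/Corollary \ref{cor-coefface} combined with $\textnormal{L}_iS^d \simeq S^{d-i}$ (for $S^d$-acyclicity), while the tail term $B^d$ is handled by the two parts of Lemma \ref{lem-acyclic}. The paper supplies no separate proof because this assembly is immediate, so your write-out matches the intended reasoning.
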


\begin{rmk}\label{Biacyclic}
The conclusion of Lemma \ref{lem-acyclic}\eqref{lem-acyclic-1} can be strengthened as follows: for $i \geq d$ and $K$ a torsion-free compact open pro-$p$ subgroup of $G$, we have $\Ext^j_{\Lambda(K)}(C,B^{i,\vee}) = 0$ for $0 \leq j < d$.  Indeed, by Corollary \ref{cor-coefface}, this assertion holds if $B^i$ is replaced by $A^i$ with $i \geq 0$.  The desired claim then follows from a straightforward dimension-shifting argument.  
\end{rmk}

\vspace{5pt}

\subsubsection{}

We define a complex $Y^\bullet$ of smooth $G$-representations by applying $S^d$ to \eqref{I1acyclic}, truncating, and translating.  Explicitly, we have
$$0 \longrightarrow Y^0 \longrightarrow Y^1 \longrightarrow \ldots \longrightarrow Y^d \longrightarrow 0,$$
with $Y^0 = S^d(B^d)$ and $Y^i = S^d(A^{d - i})$ for $1 \leq i \leq d$.  Since \eqref{I1acyclic} is an $S^d$-acyclic resolution of $\pi$, we have 
\begin{equation}\label{cohofY}
h^i(Y^\bullet) \cong \textnormal{L}_{d - i}S^d(\pi) \cong S^i(\pi). 
\end{equation}

Now define a first quadrant double complex $D := D^{\bullet,\bullet}$ by
$$D^{i,j} := \textnormal{C}^i(I_1,Y^j).$$
We multiply the differentials $D^{i,j} \longrightarrow D^{i, j + 1}$ by $(-1)^i$, so that $d_{\textnormal{vert}}\circ d_{\textnormal{hor}} + d_{\textnormal{hor}} \circ d_{\textnormal{vert}} = 0$.

\vspace{5pt}

\subsubsection{Spectral Sequence --- I}

Let $\textnormal{Tot}(D)^\bullet$ denote the direct sum totalization of the double complex $D$.  We have the following decreasing filtration by rows:
$$F^r_{\textnormal{row}}\textnormal{Tot}(D)^s = \bigoplus_{t \geq r} D^{s - t , t}.$$
Since $D$ is a first quadrant double complex, we obtain an associated convergent spectral sequence:
\begin{equation}\label{ss1}
E_2^{i,j} = h^i_{\textnormal{vert}}(h^{j,\bullet}_{\textnormal{hor}}(D)) \Longrightarrow h^{i + j}(\textnormal{Tot}(D)^\bullet)
\end{equation}

We examine the terms in the spectral sequence.  Note first that we have $h^{j,i}_{\textnormal{hor}}(D) = \textnormal{H}^j(I_1,Y^i)$.

\begin{lemma}\label{lem-acyclic2}
\hfill
\begin{enumerate}[(a)]
\item For $i \geq 0$, the $G$-representation $S^d(A^i)$ is $I_1$-acyclic. \label{lem-acyclic2-1}
\item For $i \geq d$, the $G$-representation $S^d(B^i)$ is $I_1$-acyclic. \label{lem-acyclic2-2}
\end{enumerate}
\end{lemma}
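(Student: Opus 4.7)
The strategy is to prove (a) by a direct computation exploiting projectivity of $A^{i,\vee}$ in pseudocompact modules, and then to deduce (b) from (a) via a dimension-shifting argument.

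For (a), I would begin by observing that since $A^i$ is injective in $\mathfrak{Rep}^\infty(G)$, the restriction $A^i|_{I_1}$ is injective in $\mathfrak{Rep}^\infty(I_1)$ by \cite[Prop. 2.1.2]{emerton:ordII}. Pontryagin duality therefore makes $A^{i,\vee}|_{\Lambda(I_1)}$ a projective object of $\Lambda(I_1)\mathfrak{-Mod}^{\textnormal{pc}}$, which by \cite[Lem. 1.6]{brumer} is a direct summand of some pseudocompact free module $\prod_{\alpha\in \sA}\Lambda(I_1)$. I would then fix a cofinal family of open normal torsion-free pro-$p$ subgroups $K \triangleleft I_1$ and use the decomposition $\Lambda(I_1) \cong \bigoplus_{\bar g \in I_1/K}\Lambda(K)\bar g$ of left $\Lambda(K)$-modules together with Lemma \ref{props}\eqref{props-3} to identify $\textnormal{H}^d(K,\Lambda(I_1)) \cong C[I_1/K]$ as $C[I_1/K]$-modules (the regular representation). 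Since $C$ is finitely presented over the noetherian ring $\Lambda(K)$, the functor $\textnormal{Ext}^d_{\Lambda(K)}(C,-)$ commutes with arbitrary products, giving $\textnormal{H}^d(K,\prod_\alpha\Lambda(I_1)) \cong \prod_\alpha C[I_1/K]$; the $\Lambda(I_1)$-linearity of the Brumer retraction ensures that $\textnormal{H}^d(K,A^{i,\vee})$ is a $C[I_1/K]$-module direct summand of this product. Because $\textnormal{H}^\bullet(I_1/K, -)$ is computed by a bounded complex of finitely generated projective $C[I_1/K]$-modules it commutes with products, and by Shapiro's lemma $\textnormal{H}^j(I_1/K, C[I_1/K]) = 0$ for $j>0$, so the same vanishing holds for any summand of a product of regular representations. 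Finally, since $K$ acts trivially on $\textnormal{H}^d(K, A^{i,\vee})$ and smooth profinite cohomology commutes with filtered colimits (\cite[Ch. I, \S2.2, Prop. 8]{serre:galoiscoh}), one obtains
\[ \textnormal{H}^j(I_1, S^d(A^i)) \;=\; \varinjlim_K \textnormal{H}^j\bigl(I_1/K, \textnormal{H}^d(K, A^{i,\vee})\bigr) \;=\; 0 \quad \text{for } j > 0. \]

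For (b), fix $i \geq d$ and apply $\textnormal{Ext}^\bullet_{\Lambda(K)}(C,-)$ to the dualized short exact sequence $0 \to B^{i+1,\vee} \to A^{i,\vee} \to B^{i,\vee} \to 0$ for torsion-free compact open pro-$p$ $K$. The term $\textnormal{Ext}^{d-1}(C, B^{i,\vee})$ vanishes by Remark \ref{Biacyclic}, and $\textnormal{Ext}^{d+1}(C, B^{i+1,\vee})$ vanishes because $K$ has cohomological dimension $d$. What survives is a short exact sequence at degree $d$ that passes to filtered colimits to yield a short exact sequence of smooth $G$-representations
\[ 0 \longrightarrow S^d(B^{i+1}) \longrightarrow S^d(A^i) \longrightarrow S^d(B^i) \longrightarrow 0. \]
Its long exact sequence in $\textnormal{H}^\bullet(I_1, -)$, combined with part (a), produces isomorphisms $\textnormal{H}^j(I_1, S^d(B^i)) \cong \textnormal{H}^{j+1}(I_1, S^d(B^{i+1}))$ for all $j \geq 1$. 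Iterating these isomorphisms and using $\textnormal{cd}(I_1) = d$ from \cite[Cor. (1)]{serre:cohdim} gives the vanishing of $\textnormal{H}^j(I_1, S^d(B^i))$ for $j \geq 1$ as soon as we shift above the cohomological dimension.

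The main obstacle is the bookkeeping in step (a): namely, ensuring that the Brumer summand decomposition of $A^{i,\vee}$ induces $I_1$-equivariant decompositions of the cohomology groups with the correct $C[I_1/K]$-module structure, and justifying carefully the commutation of $\textnormal{Ext}^d_{\Lambda(K)}(C,-)$ and of $\textnormal{H}^\bullet(I_1/K, -)$ with the infinite direct products that appear in the pseudocompact setting.
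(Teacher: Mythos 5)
Your argument is correct, and part (a) takes a genuinely different route from the paper. The paper's proof of (a) passes through the Hochschild--Serre spectral sequence for continuous cohomology of $p$-adic analytic groups (Symonds--Weigel, Theorem 4.2.6): since $\textnormal{H}^k_{\textnormal{cts}}(K,A^{i,\vee})$ vanishes for $k\neq d$ by Corollary \ref{cor-coefface}, the spectral sequence collapses to give $\textnormal{H}^j(I_1/K, \textnormal{H}^d_{\textnormal{cts}}(K,A^{i,\vee}))\cong\textnormal{H}^{j+d}_{\textnormal{cts}}(I_1,A^{i,\vee})$, which vanishes for $j\geq1$ again by projectivity of $A^{i,\vee}$ over $\Lambda(I_1)$. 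You sidestep the spectral sequence entirely by identifying the finite-group module $\Ext^d_{\Lambda(K)}(C,A^{i,\vee})$ concretely as a $C[I_1/K]$-direct summand of a product of copies of the regular representation, using the free decomposition $\Lambda(I_1)\cong\bigoplus_{\bar g}\Lambda(K)\bar g$, the commutation of $\Ext^d_{\Lambda(K)}(C,-)$ with products (noetherianity of $\Lambda(K)$), and Shapiro's lemma. This is more elementary -- it avoids importing continuous-cohomology machinery -- at the cost of the module-theoretic bookkeeping you flag as the main obstacle (equivariance of the Brumer retraction, commutation with infinite products), which is indeed where the care is needed but works out since the retraction is $\Lambda(I_1)$-linear and hence $I_1/K$-equivariant on $K$-cohomology. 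For part (b), your dimension-shifting argument is essentially identical to the paper's; the only cosmetic difference is that you re-derive the short exact sequence $0\to S^d(B^{i+1})\to S^d(A^i)\to S^d(B^i)\to 0$ directly from the long exact $\textnormal{Ext}$ sequence and Remark \ref{Biacyclic}, whereas the paper cites the $S^d$-acyclicity of $B^i$ from Lemma \ref{lem-acyclic}(a). Both are fine.
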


\begin{proof}
Let $\cK$ denote the set of compact open normal subgroups of $I_1$.  Since $\cK$ is cofinal in the set of all compact open subgroups of $G$, we have a canonical $I_1$-equivariant isomorphism
$$S^d(\tau) \cong \varinjlim_{K \in \cK} \Ext^d_{\Lambda(K)}(C, \tau^\vee)$$
for $\tau \in \mathfrak{Rep}^\infty(G)$.  By normality, we see that each $\Ext^d_{\Lambda(K)}(C, \tau^\vee)$ is a smooth $I_1/K$-representation.  Therefore, by \cite[\S I.2.2, Prop. 8]{serre:galoiscoh} we have
$$\textnormal{H}^j\big(I_1, S^d(\tau)\big) \cong \varinjlim_{K \in \cK} \textnormal{H}^j\big(I_1/K, \Ext^d_{\Lambda(K)}(C,\tau^\vee)\big).$$

(a)  Suppose now that $\tau = A^i$.  In order to prove the claim, it suffices to show $\textnormal{H}^j(I_1/K, \Ext^d_{\Lambda(K)}(C,A^{i,\vee})) = 0$ for all $j \geq 1$ and all $K \in \cK$.  By Lemma \ref{props}\eqref{props-2}, we have
$$\textnormal{H}^j\big(I_1/K, \Ext^d_{\Lambda(K)}(C,A^{i,\vee})\big) \cong \textnormal{H}^j\big(I_1/K, \textnormal{H}^d_{\textnormal{cts}}(K,A^{i,\vee})\big).$$
By \cite[Thm. 4.2.6]{symondsweigel}, we have a Hochschild--Serre spectral sequence for continuous cohomology
$$\textnormal{H}^j\big(I_1/K, \textnormal{H}^k_{\textnormal{cts}}(K,A^{i,\vee})\big) \Longrightarrow \textnormal{H}^{j + k}_{\textnormal{cts}}(I_1,A^{i,\vee}).$$
By Lemma \ref{props}\eqref{props-2} and Corollary \ref{cor-coefface}, we have $\textnormal{H}^k_{\textnormal{cts}}(K,A^{i,\vee}) = 0$ for $k \neq d$.  Therefore the spectral sequence above collapses to give $\textnormal{H}^j(I_1/K, \textnormal{H}^d_{\textnormal{cts}}(K,A^{i,\vee})) \cong \textnormal{H}^{j + d}_{\textnormal{cts}}(I_1,A^{i,\vee})$.  Applying Lemma \ref{props}\eqref{props-2} and Corollary \ref{cor-coefface} once more gives $\textnormal{H}^j(I_1/K, \textnormal{H}^d_{\textnormal{cts}}(K,A^{i,\vee})) = 0$ for $j \geq 1$.

(b)   Assume that $i\geq d$, and consider the exact sequence 
$$0 \longrightarrow B^{i} \longrightarrow A^i \longrightarrow B^{i + 1} \longrightarrow 0.$$
Applying $S^d$ and using Lemma \ref{lem-acyclic}\eqref{lem-acyclic-1}, we obtain a short exact sequence
$$0 \longrightarrow S^d(B^{i + 1}) \longrightarrow S^d(A^i) \longrightarrow S^d(B^i) \longrightarrow 0.$$
By examining the long exact sequence obtained by applying $\textnormal{H}^0(I_1,-)$, and using part \eqref{lem-acyclic2-1}, we get $\textnormal{H}^j(I_1, S^d(B^i)) \cong \textnormal{H}^{j + 1}(I_1,S^d(B^{i + 1}))$ for $j \geq 1$.  If $1 \leq j \leq d$, then by applying this isomorphism inductively we obtain 
$$\textnormal{H}^j\big(I_1, S^d(B^i)\big) \cong \textnormal{H}^{d + 1}\big(I_1,S^d(B^{i + d + 1 - j})\big) = 0,$$
since the cohomological dimension of $I_1$ is $d$ (by torsion-freeness).  On the other hand, if $j \geq d + 1$, then we obtain $\textnormal{H}^j(I_1, S^d(B^i)) = 0$ straight away.  
\end{proof}

Applying the above lemma, we obtain
$$h^{j,i}_{\textnormal{hor}}(D) = \textnormal{H}^j(I_1,Y^i) = \begin{cases} \textnormal{H}^0(I_1, Y^i) & \textnormal{if}~ j = 0,\\ 0 & \textnormal{if}~ j > 0. \end{cases}$$
Our next task will be to identify the term $\textnormal{H}^0(I_1, Y^i)$.

\begin{lemma}\label{heckeisom}
Suppose $\tau = A^i$ with $i \geq 0$ or $\tau = B^i$ with $i \geq d$.  We then have an isomorphism of $C$-vector spaces
$$S^d(\tau)^{I_1} \cong (\tau^{I_1})^\vee.$$
\end{lemma}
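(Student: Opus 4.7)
The plan is to unwind the definition of $S^d$, reduce to continuous cohomology via a Hochschild--Serre spectral sequence, and finish with Poincar\'e duality for $I_1$.

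First, since the set $\cK$ of compact open normal subgroups of $I_1$ is cofinal in the set of all compact open subgroups of $G$, the definition of $S^d$ gives
$$S^d(\tau) \;\cong\; \varinjlim_{K \in \cK} \Ext^d_{\Lambda(K)}(C, \tau^\vee),$$
and each term carries a natural smooth action of $I_1/K$ by functoriality in the group argument. Because $I_1$-invariants commute with filtered colimits of smooth $I_1$-representations, taking $I_1$-invariants yields
$$S^d(\tau)^{I_1} \;\cong\; \varinjlim_{K \in \cK} \Ext^d_{\Lambda(K)}(C, \tau^\vee)^{I_1/K}.$$

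Next, Lemma \ref{props}\eqref{props-2} lets us rewrite each Ext group as continuous cohomology. The hypothesis on $\tau$ guarantees, by Corollary \ref{cor-coefface} (in case $\tau = A^i$) or Remark \ref{Biacyclic} (in case $\tau = B^i$ with $i \geq d$), the vanishing $\textnormal{H}^k_{\textnormal{cts}}(K, \tau^\vee) = 0$ for $0 \leq k < d$. The Hochschild--Serre spectral sequence
$$\textnormal{H}^j\bigl(I_1/K, \textnormal{H}^k_{\textnormal{cts}}(K, \tau^\vee)\bigr) \;\Longrightarrow\; \textnormal{H}^{j + k}_{\textnormal{cts}}(I_1, \tau^\vee)$$
therefore collapses onto its row $k = d$, giving $\textnormal{H}^d_{\textnormal{cts}}(K, \tau^\vee)^{I_1/K} \cong \textnormal{H}^d_{\textnormal{cts}}(I_1, \tau^\vee)$, a term which no longer depends on $K$. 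The colimit thus stabilizes and we obtain
$$S^d(\tau)^{I_1} \;\cong\; \textnormal{H}^d_{\textnormal{cts}}(I_1, \tau^\vee) \;\cong\; \Ext^d_{\Lambda(I_1)}(C, \tau^\vee).$$

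Finally, I would invoke Poincar\'e duality for the torsion-free compact $p$-adic analytic group $I_1$, which has cohomological dimension $d$: Lemma \ref{props}\eqref{props-3} says precisely that $\Lambda(I_1)$ is self-injective with one-dimensional top Ext, so the standard argument --- choose a finite projective $\Lambda(I_1)$-resolution of $C$, apply $\Hom_{\Lambda(I_1)}(-, \tau^\vee)$, and interpret the top cohomology via Pontryagin duality --- produces a $C$-linear isomorphism $\Ext^d_{\Lambda(I_1)}(C, \tau^\vee) \cong (\tau^\vee)_{I_1}$, where $(\tau^\vee)_{I_1}$ denotes pseudocompact $I_1$-coinvariants. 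The standard duality $(\tau^\vee)_{I_1} \cong (\tau^{I_1})^\vee$ between pseudocompact coinvariants and $C$-linear duals of smooth invariants then completes the proof. The first two steps are formal bookkeeping once the key vanishing results are in hand; the main obstacle is this last step, whose rigorous execution rests on the Gorenstein property of $\Lambda(I_1)$ combined with a careful application of Pontryagin duality, and is precisely where the torsion-freeness hypothesis on $I_1$ is essential.
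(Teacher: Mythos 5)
Your proof is correct and follows essentially the same path as the paper's: pass to the cofinal family of open normal subgroups of $I_1$, use the vanishing of $\textnormal{H}^k_{\textnormal{cts}}(K,\tau^\vee)$ for $k<d$ (via Corollary \ref{cor-coefface} and Remark \ref{Biacyclic}) to collapse Hochschild--Serre onto the top row and thereby stabilize the colimit at $\textnormal{H}^d_{\textnormal{cts}}(I_1,\tau^\vee)$, then apply top-degree Poincar\'e duality for the torsion-free group $I_1$. The two small differences of execution are that you commute $I_1$-invariants across the filtered colimit via the general smooth-representation fact rather than via the paper's subsidiary Claim that the restriction maps are injective (a Claim the paper proves anyway because it is reused in Proposition \ref{prop:orient} to obtain $\cH$-equivariance), and that you sketch the final duality step directly whereas the paper simply invokes the natural isomorphism $\Phi^d$ of \cite[Prop. 4.5.4]{symondsweigel}.
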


\begin{proof}

Let $\cK$ be as in the proof of Lemma \ref{lem-acyclic2}.  We begin with the following claim.

\begin{claim}
Let $K$ and $K'$ be two subgroups in $\cK$ such that $K' \subset K$.
\begin{enumerate}[(a)]
\item For $i \geq 0$, the restriction map $\textnormal{res}^K_{K'}: \textnormal{Ext}^d_{\Lambda(K)}(C,A^{i,\vee})  \longrightarrow \textnormal{Ext}^d_{\Lambda(K')}(C,A^{i,\vee})$ is injective.  
\item For $i \geq d$, the restriction map $\textnormal{res}^K_{K'}: \textnormal{Ext}^d_{\Lambda(K)}(C,B^{i,\vee})  \longrightarrow \textnormal{Ext}^d_{\Lambda(K')}(C,B^{i,\vee})$ is injective.  
\end{enumerate}
\end{claim}

\begin{proof}[Proof of claim]

(a) Consider first the restriction map
$$\textnormal{res}^K_{K'}: \textnormal{Ext}^d_{\Lambda(K)}(C,\Lambda(K))  \longrightarrow \textnormal{Ext}^d_{\Lambda(K')}(C,\Lambda(K)) \cong \textnormal{Ext}^d_{\Lambda(K')}(C,\Lambda(K'))^{\oplus [K:K']}.$$
Applying Lemma \ref{props}\eqref{props-3}, this becomes the diagonal map
$$C \longrightarrow C^{\oplus [K:K']}.$$
Therefore $\textnormal{res}^K_{K'}: \textnormal{Ext}^d_{\Lambda(K)}(C,\Lambda(K))  \longrightarrow \textnormal{Ext}^d_{\Lambda(K')}(C,\Lambda(K))$ is injective.

Since $A^i$ is an injective $G$-representation, the pseudocompact $\Lambda(K)$-module $A^{i,\vee}|_{\Lambda(K)}$ is projective and therefore it is a direct summand of $\prod_{a\in \sA} \Lambda(K)$ for some index set $\sA$ (cf. proof of Lemma \ref{coefface}).  By the previous paragraph, the restriction map
$$\textnormal{res}^K_{K'}: \prod_{a\in \sA} \Ext^d_{\Lambda(K)}(C,\Lambda(K)) \longrightarrow \prod_{a\in \sA} \Ext^d_{\Lambda(K')}(C,\Lambda(K))$$
is injective; since $\Ext^d_{\Lambda(K)}(C,A^{i,\vee})$ is a direct summand of $\Ext^d_{\Lambda(K)}(C,\prod_{a\in \sA}\Lambda(K)) \cong \prod_{a\in \sA} \Ext^d_{\Lambda(K)}(C,\Lambda(K))$, the result follows.

(b)  Recall that we have a short exact sequence of $G$-representations 
$$0 \longrightarrow B^i \longrightarrow A^i \longrightarrow B^{i + 1} \longrightarrow 0$$
for $i\geq 0$.  Applying Pontryagin duality gives a short exact sequence of pseudocompact $\Lambda(G)$-modules
$$0 \longrightarrow B^{i + 1, \vee} \longrightarrow A^{i, \vee} \longrightarrow B^{i,\vee} \longrightarrow 0.$$
This exact sequence induces long exact sequences which fit into a commutative diagram 
\begin{center}
\begin{tikzcd}[column sep = small]
 \ldots \ar[r] & \Ext^j_{\Lambda(K)}(C, A^{i, \vee}) \ar[r] \ar[d, "\textnormal{res}^K_{K'}"] & \Ext^j_{\Lambda(K)}(C, B^{i, \vee}) \ar[r] \ar[d, "\textnormal{res}^K_{K'}"] & \Ext^{j + 1}_{\Lambda(K)}(C, B^{i + 1, \vee}) \ar[r] \ar[d, "\textnormal{res}^K_{K'}"] & \Ext^{j + 1}_{\Lambda(K)}(C, A^{i, \vee}) \ar[r] \ar[d, "\textnormal{res}^K_{K'}"] & \ldots \\
  \ldots \ar[r] & \Ext^j_{\Lambda(K')}(C, A^{i, \vee}) \ar[r]  & \Ext^j_{\Lambda(K')}(C, B^{i, \vee}) \ar[r]  & \Ext^{j + 1}_{\Lambda(K')}(C, B^{i + 1, \vee}) \ar[r] & \Ext^{j + 1}_{\Lambda(K')}(C, A^{i, \vee}) \ar[r]  & \ldots 
\end{tikzcd}
\end{center}
Suppose $0 \leq j \leq d - 1$.  By Corollary \ref{cor-coefface}, the leftmost vertical map is an isomorphism (both the domain and codomain are $0$); by the previous point, the rightmost vertical map is injective.  A diagram chase then shows that injectivity of the map $\textnormal{res}^K_{K'}: \Ext^j_{\Lambda(K)}(C,B^{i,\vee}) \longrightarrow \Ext^j_{\Lambda(K')}(C,B^{i,\vee})$ implies injectivity of the map $\textnormal{res}^K_{K'}: \Ext^{j + 1}_{\Lambda(K)}(C,B^{i + 1,\vee}) \longrightarrow \Ext^{j + 1}_{\Lambda(K')}(C,B^{i + 1,\vee})$.  Proceeding by induction from the base case $\textnormal{res}^K_{K'}: \Hom_{\Lambda(K)}(C,B^{i,\vee}) \longhookrightarrow \Hom_{\Lambda(K')}(C,B^{i,\vee})$ gives the desired result.
\end{proof}

Suppose now that $\tau = A^i$ with $i \geq 0$ or $\tau = B^i$ with $i \geq d$.  The claim above implies that we have
$$S^d(\tau) \cong \varinjlim_{K\in \cK}\Ext^d_{\Lambda(K)}(C,\tau^\vee)$$
where all transition maps are injective.  The group $I_1$ acts on each $\Ext^d_{\Lambda(K)}(C,\tau^\vee)$, and we then have
\begin{eqnarray*}
S^d(\tau)^{I_1} & \cong & \Big(\varinjlim_{K\in \cK } \Ext^d_{\Lambda(K)}(C,\tau^\vee)\Big)^{I_1} \\
 & \cong & \varinjlim_{K\in \cK}  \Ext^d_{\Lambda(K)}(C,\tau^\vee)^{I_1}  \\ 
 & \cong & \Ext^d_{\Lambda(I_1)}(C,\tau^\vee)\\
 & \cong & \textnormal{H}_{\textnormal{cts}}^d(I_1,\tau^\vee).
\end{eqnarray*}
The last two isomorphisms may be obtained as follows: by Lemma \ref{props}\eqref{props-2}, the space $\Ext^d_{\Lambda(K)}(C,\tau^\vee)^{I_1}$ is isomorphic to $\textnormal{H}^0(I_1/K, \textnormal{H}^d_{\textnormal{cts}}(K,\tau^\vee))$.  Exactly as in the proof of Lemma \ref{lem-acyclic2}\eqref{lem-acyclic2-1}, we have
$$\textnormal{H}^0(I_1/K, \textnormal{H}^d_{\textnormal{cts}}(K,\tau^\vee)) \cong \textnormal{H}^d_{\textnormal{cts}}(I_1,\tau^\vee) \cong  \Ext^d_{\Lambda(I_1)}(C,\tau^\vee)$$
(for $\tau = B^i$, we use Remark \ref{Biacyclic}).  Since the restriction maps in the direct limit are injective, we obtain the desired isomorphisms.

To conclude, we apply \cite[Prop. 4.5.4]{symondsweigel}: we have a natural isomorphism
$$\Phi^d: \textnormal{H}^d_{\textnormal{cts}}(I_1, \tau^\vee) \stackrel{\sim}{\longrightarrow} \textnormal{H}^0(I_1,\tau)^\vee$$
(we will describe $\Phi^d$ explicitly in the proof of Proposition \ref{prop:orient} below).
\end{proof}

Next, we examine the $\cH$-action under the isomorphism $S^d(\tau)^{I_1} \cong (\tau^{I_1})^\vee$.  We thank Peter Schneider for pointing out the need for several results below, and in particular, for suggesting the statement and proof of Lemma \ref{orient-hom}.

We define
$$\fI := \varinjlim_n \varinjlim_K \textnormal{H}^d(K,\bbZ/p^n\bbZ)^\vee,$$ 
where $K$ runs over all compact open subgroups of $G$, and the transition maps in the inner direct limit are Pontryagin duals of corestrictions.  (In the above definition, the notation $\vee$ denotes the Pontryagin dual as in \cite{nsw:coh}: if $A$ is a $p$-power-torsion abelian group, we have $A^\vee = \Hom(A,\bbQ_p/\bbZ_p)$.)  Note that $\fI$ is a torsion $\bbZ_p$-module with a discrete action of the group $G$.

Suppose now that $K$ is a torsion-free open pro-$p$ subgroup of $G$.  By \cite[Cor. (1)]{serre:cohdim} and \cite[Thm. V.2.5.8]{lazard}, $K$ is a Poincar\'e group of dimension $d$.  Since the set of open normal subgroups of $K$ is cofinal in the set of compact open subgroups of $G$, according to \cite[\S III.4]{nsw:coh} the restriction of the $G$-action on $\fI$ to $K$ gives the dualizing module of $K$.  Consequently, using \cite[\S I.4.5, Prop. 30(b)]{serre:galoiscoh}, we see that for every torsion-free open pro-$p$ subgroup $K$ we have a canonically defined trace map
$$\textnormal{tr}_K : \textnormal{H}^d(K,\fI) \stackrel{\sim}{\longrightarrow} \bbQ_p/\bbZ_p.$$

\begin{lemma}
\label{tr-cor}
Let $K' \subset K$ be an inclusion of torsion-free open pro-$p$ subgroups of $G$.  Then the composition
$$\textnormal{H}^d(K',\fI) \xrightarrow{\textnormal{cor}_{K'}^K} \textnormal{H}^d(K,\fI) \xrightarrow{\textnormal{tr}_K} \bbQ_p/\bbZ_p$$
is equal to $\textnormal{tr}_{K'}$.  In particular, the map $\textnormal{cor}_{K'}^K: \textnormal{H}^d(K',\fI) \longrightarrow \textnormal{H}^d(K,\fI)$ is an isomorphism.  
\end{lemma}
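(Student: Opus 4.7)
The plan is to first establish the composition identity $\textnormal{tr}_K \circ \textnormal{cor}_{K'}^K = \textnormal{tr}_{K'}$, from which the isomorphism statement follows formally.

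First, I would verify that $\fI$ simultaneously serves as the dualizing module for both $K$ and $K'$. The open normal subgroups of $K'$ remain cofinal in the system of compact open subgroups of $G$, so the direct limit defining $\fI$ can equally be computed over open normal subgroups of $K'$; the argument that identified $\fI|_K$ with the dualizing module of $K$ then applies verbatim to $K'$, producing a canonical trace isomorphism $\textnormal{tr}_{K'} : \textnormal{H}^d(K', \fI) \stackrel{\sim}{\longrightarrow} \bbQ_p/\bbZ_p$ from the Poincaré duality structure on $K'$.

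With both traces described using the same module $\fI$, the identity $\textnormal{tr}_K \circ \textnormal{cor}_{K'}^K = \textnormal{tr}_{K'}$ becomes the naturality of Poincaré duality traces under corestriction along an open inclusion of finite index. This is a standard compatibility contained in \cite[\S I.4.5, Prop. 30(b)]{serre:galoiscoh}; alternatively it may be deduced from the compatibility of corestriction with cup product together with the characterization of the trace as evaluation at the fundamental class of the Poincaré duality pairing.

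The main technical obstacle is this naturality step: one must carefully compare the abstract dualizing-module description of the trace with any concrete realization (for example the isomorphism $\Phi^d$ of Symonds--Weigel invoked in the proof of Lemma \ref{heckeisom}), tracking signs and explicit cocycle representatives so that the two normalizations of ``the'' trace are known to coincide. Granting the identity, the final assertion of the lemma is immediate: $\textnormal{tr}_K$ and $\textnormal{tr}_{K'}$ are both isomorphisms by construction, so $\textnormal{cor}_{K'}^K = \textnormal{tr}_K^{-1} \circ \textnormal{tr}_{K'}$ is an isomorphism as well.
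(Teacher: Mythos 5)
Your overall structure is right: you correctly identify that the heart of the lemma is the compatibility $\textnormal{tr}_K \circ \textnormal{cor}_{K'}^K = \textnormal{tr}_{K'}$, that both traces come from the fact that $\fI$ is simultaneously the dualizing module of $K$ and $K'$ (because the open normal subgroups of $K'$ are still cofinal in the compact opens of $G$), and that the isomorphism statement follows formally once the composition identity is in hand.

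However, the crucial compatibility is not actually proved. You cite \cite[\S I.4.5, Prop.\ 30(b)]{serre:galoiscoh} and say the naturality is ``contained in'' it, but Prop.\ 30(b) states the duality isomorphism $\textnormal{H}^i(K,A)\times \textnormal{H}^{d-i}(K,\Hom(A,\fI))\to\textnormal{H}^d(K,\fI)\cong \bbQ_p/\bbZ_p$; it does not directly assert the behaviour of the trace under corestriction across an open subgroup. You then explicitly flag this step as ``the main technical obstacle'' and proceed by ``granting the identity,'' so the gap is left open. The paper closes it by a concrete argument: the remark following \cite[\S I.3.5, Prop.\ 18]{serre:galoiscoh} gives a commutative square identifying $\Hom_K(\fI[p^n],\fI)\to \Hom_{K'}(\fI[p^n],\fI)$ (restriction) with $(\textnormal{cor}_{K'}^K)^\vee$ on $\textnormal{H}^d(-,\fI[p^n])^\vee$ via the representability isomorphisms, and step (7) of the proof of Prop.\ 30 shows both $\Hom$-groups are $\bbZ_p/p^n\bbZ_p$, so the restriction map is an isomorphism; passing to the limit over $n$ and noting that the traces are precisely the images of the identity elements under the horizontal arrows yields the identity. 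You also bring in a concern about reconciling the abstract trace with the Symonds--Weigel map $\Phi^d$ and tracking signs, but that comparison plays no role in this lemma (it is only needed later, in the proof of Proposition \ref{prop:orient}); the lemma is a statement purely internal to the dualizing-module framework.
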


\begin{proof}
According to the remark following \cite[\S I.3.5, Prop. 18]{serre:galoiscoh}, we have a commutative diagram
\begin{center}
\begin{tikzcd}[column sep = large, row sep = large]
\Hom_K(\fI[p^n], \fI) \ar[r, "\sim"] \ar[d, "\textnormal{res}^K_{K'}"'] & \textnormal{H}^d(K, \fI[p^n])^\vee \ar[d, "(\textnormal{cor}_{K'}^K)^\vee"]\\
\Hom_{K'}(\fI[p^n], \fI) \ar[r, "\sim"]  & \textnormal{H}^d(K', \fI[p^n])^\vee 
\end{tikzcd}
\end{center}
where the horizontal maps are those coming from the representability property of $\fI$.  By step (7) in the proof of Proposition 30 in \S I.4.5 of \emph{op. cit.}, we have $\Hom_K(\fI[p^n], \fI) \cong \Hom_{K'}(\fI[p^n], \fI) \cong \bbZ_p/p^n\bbZ_p$, and thus both $\textnormal{res}^K_{K'}$ and $(\textnormal{cor}_{K'}^K)^\vee$ in the diagram above are isomorphisms.  Passing to the limit over $n$, we obtain a commutative diagram
\begin{center}
\begin{tikzcd}[column sep = large, row sep = large]
\Hom_K(\fI, \fI) \ar[r, "\sim"] \ar[d, "\textnormal{res}^K_{K'}"', "{\rotatebox{90}{$\sim$}}"] & \textnormal{H}^d(K, \fI)^\vee \ar[d, "(\textnormal{cor}_{K'}^K)^\vee", "{\rotatebox{90}{$\sim$}}"']\\
\Hom_{K'}(\fI, \fI) \ar[r, "\sim"]  & \textnormal{H}^d(K', \fI)^\vee 
\end{tikzcd}
\end{center}
The result now follows, as $\textnormal{tr}_K$ and $\textnormal{tr}_{K'}$ are the images of the respective identity maps under the horizontal arrows.
\end{proof}

In order to alleviate notation, we use the following shorthand:  for $g \in G$, we define $I_g := I_1 \cap gI_1 g^{-1}$.

\begin{defn}
Given $g\in G$, we have a chain of isomorphisms
$$\bbQ_p/\bbZ_p \xrightarrow{\textnormal{tr}_{I_{g^{-1}}}^{-1}} \textnormal{H}^d(I_{g^{-1}},\fI) \stackrel{g_*}{\longrightarrow} \textnormal{H}^d(I_g,\fI) \xrightarrow{\textnormal{tr}_{I_g}}\bbQ_p/\bbZ_p.$$
We denote by $\Psi(g) \in \bbZ_p^\times$ the corresponding element of $\bbZ_p^\times = \textnormal{Aut}(\bbQ_p/\bbZ_p)$.  
\end{defn}

\begin{lemma}
\label{orient-hom}
The map 
\begin{eqnarray*}
G & \longrightarrow & \bbZ_p^\times \\
g & \longmapsto & \Psi(g)
\end{eqnarray*}
is a homomorphism, which is trivial on $I_1$.
\end{lemma}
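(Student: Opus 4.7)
The plan is to reduce the computation to a single ``common refinement'' subgroup on which the various conjugation maps $g_*, h_*, (gh)_*$ can be composed and compared cleanly.

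First I would introduce three auxiliary subgroups
\begin{equation*}
L := I_{h^{-1}} \cap I_{(gh)^{-1}}, \qquad K' := I_{g^{-1}} \cap I_h, \qquad J := I_g \cap I_{gh}.
\end{equation*}
Direct manipulation of the defining intersections shows $h L h^{-1} = K'$ and $g K' g^{-1} = J$, so the usual conjugation maps fit into a tower of isomorphisms on cohomology
\begin{equation*}
\textnormal{H}^d(L, \fI) \xrightarrow{h_*} \textnormal{H}^d(K', \fI) \xrightarrow{g_*} \textnormal{H}^d(J, \fI),
\end{equation*}
whose composite is $(gh)_*$. All three subgroups are torsion-free open pro-$p$ (being subgroups of $I_1$), so their trace maps are defined.

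Next, I would use Lemma \ref{tr-cor} systematically. It implies that for every inclusion of torsion-free open pro-$p$ subgroups $K'' \subset K$, the corestriction $\textnormal{cor}_{K''}^K: \textnormal{H}^d(K'', \fI) \to \textnormal{H}^d(K, \fI)$ is an isomorphism satisfying $\textnormal{tr}_{K''} = \textnormal{tr}_K \circ \textnormal{cor}_{K''}^K$, and hence $\textnormal{tr}_K^{-1} = \textnormal{cor}_{K''}^K \circ \textnormal{tr}_{K''}^{-1}$ after inverting. Applying this to the inclusions $L \subset I_{h^{-1}}, L \subset I_{(gh)^{-1}}$, $K' \subset I_h, K' \subset I_{g^{-1}}$, and $J \subset I_g, J \subset I_{gh}$, I can rewrite
\begin{equation*}
\Psi(g) = \textnormal{tr}_J \circ g_* \circ \textnormal{tr}_{K'}^{-1}, \quad \Psi(h) = \textnormal{tr}_{K'} \circ h_* \circ \textnormal{tr}_L^{-1}, \quad \Psi(gh) = \textnormal{tr}_J \circ (gh)_* \circ \textnormal{tr}_L^{-1},
\end{equation*}
using that conjugation on cohomology commutes with corestriction (a general functoriality property of pairs $(\phi, f)$ with $\phi$ a group map and $f$ compatible on coefficients). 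Composing the first two and using $g_* \circ h_* = (gh)_*$ yields $\Psi(g)\Psi(h) = \Psi(gh)$.

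For triviality on $I_1$, if $g \in I_1$ then $I_{g^{-1}} = I_g = I_1$, so $\Psi(g)$ is computed from the self-conjugation $g_*$ on $\textnormal{H}^d(I_1, \fI)$, which is the identity by the standard fact that inner conjugations act trivially on group cohomology (viewed as a morphism of the pair $(I_1, \fI)$). Hence $\Psi(g) = 1$. The main obstacle, besides the bookkeeping of intersections, is verifying that conjugation is compatible with corestriction in the precise form needed, but this is a formal consequence of functoriality of cohomology on pairs and should not cause trouble.
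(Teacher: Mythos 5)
Your proof is correct and follows essentially the same route as the paper: both use Lemma \ref{tr-cor} to express each $\Psi$ via traces on a common intersection subgroup, together with naturality of corestriction under conjugation, before composing. Your choice to name the three intersection subgroups $L$, $K'$, $J$ (with $hLh^{-1} = K'$ and $gK'g^{-1} = J$, all inside $I_1$) is a tidier bookkeeping than the paper's commutative diagram, which also passes through the group $hI_1h^{-1} \cap g^{-1}I_1g$, but the mathematical content is identical.
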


\begin{proof}
Let $g,h\in G$.  Lemma \ref{tr-cor} implies that we have the following commutative diagrams (we omit decorations on the corestriction maps for readability):
\begin{center}
\begin{tikzcd}[column sep = large, row sep = large]
\textnormal{H}^d(I_{h^{-1}g^{-1}}, \fI)  \ar[d, "h_*"', "{\rotatebox{90}{$\sim$}}"] & \textnormal{H}^d(I_{h^{-1}} \cap I_{h^{-1}g^{-1}}, \fI)  \ar[r, "\textnormal{cor}", "\sim"'] \ar[l, "\textnormal{cor}"', "\sim"] \ar[d, "h_*"', "{\rotatebox{90}{$\sim$}}"] &  \textnormal{H}^d(I_{h^{-1}}, \fI) \ar[d, "h_*"', "{\rotatebox{90}{$\sim$}}"] \\
\textnormal{H}^d(hI_1h^{-1} \cap g^{-1}I_1g, \fI)  & \textnormal{H}^d(I_h \cap I_{g^{-1}}, \fI)  \ar[r, "\textnormal{cor}", "\sim"'] \ar[l, "\textnormal{cor}"', "\sim"] &  \textnormal{H}^d(I_{h}, \fI) \\
\textnormal{H}^d(hI_1h^{-1} \cap g^{-1}I_1g, \fI) \ar[d, "g_*"', "{\rotatebox{90}{$\sim$}}"] & \textnormal{H}^d(I_h \cap I_{g^{-1}}, \fI)  \ar[r, "\textnormal{cor}", "\sim"'] \ar[l, "\textnormal{cor}"', "\sim"] \ar[d, "g_*"', "{\rotatebox{90}{$\sim$}}"] &  \textnormal{H}^d(I_{g^{-1}}, \fI) \ar[d, "g_*"', "{\rotatebox{90}{$\sim$}}"] \\
\textnormal{H}^d(I_{gh}, \fI)  & \textnormal{H}^d(I_{gh} \cap I_g, \fI)  \ar[r, "\textnormal{cor}", "\sim"'] \ar[l, "\textnormal{cor}"', "\sim"] &  \textnormal{H}^d(I_{g}, \fI)
\end{tikzcd}
\end{center}
Thus, applying Lemma \ref{tr-cor} and using commutativity of the above diagrams, we get
\begin{eqnarray*}
\Psi(h) & = & \textnormal{tr}_{I_h} \circ h_* \circ \textnormal{tr}_{I_{h^{-1}}}^{-1} \\
 & = & \textnormal{tr}_{I_h \cap I_{g^{-1}}} \circ h_* \circ \textnormal{tr}_{I_{h^{-1}} \cap I_{h^{-1}g^{-1}}}^{-1} \\
 & = & \textnormal{tr}_{hI_1h^{-1} \cap g^{-1}I_1g} \circ h_* \circ \textnormal{tr}_{I_{h^{-1}g^{-1}}}^{-1} \\
\Psi(g) & = & \textnormal{tr}_{I_g} \circ g_* \circ \textnormal{tr}_{I_{g^{-1}}}^{-1} \\
& = & \textnormal{tr}_{I_{gh} \cap I_g} \circ g_* \circ \textnormal{tr}_{I_h \cap I_{g^{-1}}}^{-1} \\
& = & \textnormal{tr}_{I_{gh}} \circ g_* \circ \textnormal{tr}_{hI_1h^{-1} \cap g^{-1}I_1g}^{-1}.
\end{eqnarray*}
This implies 
$$\Psi(g)\Psi(h) = \textnormal{tr}_{I_{gh}} \circ (gh)_* \circ \circ \textnormal{tr}_{I_{h^{-1}g^{-1}}}^{-1}  = \Psi(gh),$$
which shows $\Psi$ is a homomorphism.  That $\Psi$ is trivial on $I_1$ is a standard fact about group cohomology.
\end{proof}

\begin{defn}
\begin{enumerate}
\item We define 
$$\psi : G \longrightarrow C^\times$$ 
to be the mod-$p$ reduction of the character $\Psi$.
\item Let $\ffi_C := \fI[p]\otimes_{\bbF_p}C$, and note that $\ffi_C$ is a one-dimensional $C$-vector space with an action of $G$.  We let 
$$\lambda : G \longrightarrow C^\times$$
denote character which gives the action of $G$ on $\ffi_C$.  (In particular, note that $\lambda$ is trivial on every pro-$p$ subgroup of $G$.)
\item We define the \textbf{mod-$p$ orientation character of $G$ (relative to $I_1$)}, denoted
$$\xi: G \longrightarrow C^\times,$$
by $\xi := \psi\lambda^{-1}$.  
\end{enumerate}
\end{defn}

\begin{rmk}
If the group $\bG$ is split over $F$, then the character $\xi$ is trivial; for irreducible root systems, this was shown in the proofs of \cite[Thms. 7.1, 7.2]{koziol:h1triv}, and in general in \cite[Prop. 7.16]{ollivierschneider:ext}.  The character $\xi$ is also trivial if $\bG$ is semisimple and simply connected.  
\end{rmk}

\begin{defn}
Suppose $\fm$ is an $\cH$-module.  We let $\fm(\xi)$ denote the $\cH$-module $\fm$ twisted by $\xi$: it has the same underlying vector space as $\fm$, with the operator $\T_g$ acting by $\xi(g^{-1})\T_g$.  
\end{defn}

We may now describe the aforementioned Hecke action.

\begin{propn}
\label{prop:orient}
Suppose $\tau = A^i$ with $i \geq 0$ or $\tau = B^i$ with $i \geq d$.  Then the vector space isomorphism 
$$S^d(\tau)^{I_1} \cong (\tau^{I_1})^\vee$$
upgrades to an isomorphism of $\cH$-modules
$$S^d(\tau)^{I_1} \cong (\tau^{I_1})^\vee(\xi).$$
\end{propn}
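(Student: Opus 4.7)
The vector-space isomorphism $S^d(\tau)^{I_1} \cong (\tau^{I_1})^\vee$ of Lemma \ref{heckeisom} was obtained as a composition
$$S^d(\tau)^{I_1} \;\cong\; \textnormal{H}^d_{\textnormal{cts}}(I_1,\tau^\vee) \;\stackrel{\Phi^d}{\cong}\; \textnormal{H}^0(I_1,\tau)^\vee,$$
so the plan is to unwind how the operator $\T_g$ acts through each stage and identify the discrepancy with $\xi(g^{-1})$. First I would check that the first isomorphism is $\cH$-equivariant for the natural action: both $S^d(\tau)^{I_1}$ and $\textnormal{H}^d_{\textnormal{cts}}(I_1,\tau^\vee)$ are built from $\Ext^d_{\Lambda(I_g)}(C, \tau^\vee)$-like objects, and the $\T_g$-action in both is the composition $\textnormal{cor}_{I_{g^{-1}}}^{I_1} \circ g^{-1}_* \circ \textnormal{res}_{I_g}^{I_1}$ (functoriality of $\Ext$ and continuous cohomology under restriction, conjugation and corestriction between open subgroups). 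This reduces the problem to analysing how $\Phi^d$ behaves with respect to these three operations.

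The second step is to recall that $\Phi^d$ comes from a cup-product/trace pairing; concretely, for $v \in \tau^{I_1}$ and $\omega \in \textnormal{H}^d_{\textnormal{cts}}(I_1, \tau^\vee)$,
$$\Phi^d(\omega)(v) = \textnormal{tr}_{I_1}\!\big(\omega \cup v\big),$$
where the cup product lands in $\textnormal{H}^d_{\textnormal{cts}}(I_1, C) \otimes_C \ffi_C$ and $\textnormal{tr}_{I_1}$ denotes the mod-$p$ trace map of Lemma \ref{tr-cor} (tensored against $\ffi_C$, so that the target is identified with $C$ via the discrete character $\lambda$). The key naturality properties to record are: (i) restriction and corestriction in $\textnormal{H}^d_{\textnormal{cts}}$ are adjoint to corestriction and restriction on $\textnormal{H}^0$ under the trace pairing (Lemma \ref{tr-cor}); and (ii) the conjugation map $g_\ast: \textnormal{H}^d_{\textnormal{cts}}(I_{g^{-1}}, \fI) \to \textnormal{H}^d_{\textnormal{cts}}(I_g, \fI)$ satisfies $\textnormal{tr}_{I_g} \circ g_\ast = \Psi(g) \cdot \textnormal{tr}_{I_{g^{-1}}}$ by the very definition of $\Psi$.

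Putting these together, I would compute $\Phi^d(\omega \cdot \T_g)(v)$ by unfolding $\T_g$ on the left-hand side and repeatedly using the adjunction between $\textnormal{res}$ and $\textnormal{cor}$ under the trace pairing to move everything onto $v$. The restriction/corestriction steps contribute the factors that assemble $v \cdot \T_{g^{-1}}$ on the $\textnormal{H}^0$-side (up to signs that cancel); the only nontrivial twist comes from the conjugation step, where $g^{-1}_\ast$ on $\textnormal{H}^d_{\textnormal{cts}}(I_g, \fI)$ introduces a factor of $\Psi(g^{-1}) \equiv \psi(g^{-1})$ via (ii) above, while the $g$-action on the coefficient module $\ffi_C$ implicit in the pairing contributes $\lambda(g)^{-1} = \lambda(g^{-1})^{-1}$ when we reidentify the two copies of $\ffi_C$ after conjugation. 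The product of these factors is $\psi(g^{-1})\lambda(g^{-1})^{-1} = \xi(g^{-1})$, yielding
$$\Phi^d(\omega \cdot \T_g)(v) \;=\; \xi(g^{-1}) \, \Phi^d(\omega)(v \cdot \T_{g^{-1}}),$$
which is exactly the assertion that $\Phi^d$ becomes $\cH$-equivariant after twisting the target by $\xi$.

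The main obstacle, and the part that would require the most care, is verifying the precise compatibility of $\Phi^d$ with conjugation: namely, writing $\Phi^d$ explicitly enough (via Symonds--Weigel \cite[Prop.~4.5.4]{symondsweigel}, which presumably presents it as a Pontryagin-dualized trace pairing) that one can track how the dualizing module $\fI$ transforms under $g_\ast$. Lemma \ref{orient-hom} and Lemma \ref{tr-cor} were set up exactly to isolate this transformation into the scalar $\Psi(g)$, and the definition of $\lambda$ isolates the coefficient twist; so once the explicit form of $\Phi^d$ is pinned down, the identification of the twist with $\xi = \psi \lambda^{-1}$ is essentially forced. The two required cases ($\tau = A^i$ and $\tau = B^i$ for $i \geq d$) are handled uniformly, since the argument uses only the $I_1$-acyclicity of $S^d(\tau)$ (Lemma \ref{lem-acyclic2}) and Remark \ref{Biacyclic}, both already available.
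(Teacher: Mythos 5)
Your proposal follows essentially the same route as the paper's proof: decompose the isomorphism through $\textnormal{H}^d_{\textnormal{cts}}(I_1,\tau^\vee)$, observe that the first step is already $\cH$-equivariant, and then analyse $\Phi^d$ against restriction, conjugation, and corestriction via the trace/cup-product pairing, isolating the $\psi$-factor coming from conjugation on the dualizing module and the $\lambda$-factor coming from the implicit twist by $\ffi_C$. One bookkeeping slip should be corrected: you write ``$\lambda(g)^{-1}=\lambda(g^{-1})^{-1}$,'' which is false ($\lambda$ being a homomorphism gives $\lambda(g)^{-1}=\lambda(g^{-1})$, not $\lambda(g^{-1})^{-1}$); the coefficient contribution is $\lambda(g^{-1})^{-1}=\lambda(g)$, so your final display $\psi(g^{-1})\lambda(g^{-1})^{-1}=\xi(g^{-1})$ is right while the preceding phrase ``contributes $\lambda(g)^{-1}$'' is not. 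The substantive work your sketch defers --- the explicit cup-product verification of the conjugation square via a presentation $\tau=\varinjlim_i\tau_i$ by finite-dimensional subrepresentations, and the reduction of the restriction/corestriction squares to finite $I_1$-modules via \cite[Prop.~3.6.1]{symondsweigel} together with the identification $\tau^\vee\cong\tau^\vee\otimes_C\ffi_C$ as $I_1$-representations --- is exactly what the paper's proof supplies, so your outline is the published argument with those details to be filled in.
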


\begin{proof}
Recall that the isomorphism $S^d(\tau)^{I_1} \cong (\tau^{I_1})^\vee$ is given as the composition of $S^d(\tau)^{I_1} \stackrel{\sim}{\longrightarrow} \textnormal{H}_{\textnormal{cts}}^d(I_1,\tau^\vee)$ with $\Phi^d: \textnormal{H}^d_{\textnormal{cts}}(I_1, \tau^\vee) \stackrel{\sim}{\longrightarrow} \textnormal{H}^0(I_1,\tau)^\vee$.  The injectivity in the claim of Lemma \ref{heckeisom} and the description of the $G$-action on $S^d(\tau)$ imply that the isomorphism $S^d(\tau)^{I_1} \stackrel{\sim}{\longrightarrow} \textnormal{H}_{\textnormal{cts}}^d(I_1,\tau^\vee)$ is actually $\cH$-equivariant.

It suffices to analyze the map $\Phi^d$ (and its analogs for open subgroups of $I_1$).  The stated result will follow from the commutativity of the following three diagrams, where $g \in G$:

\begin{center}
\begin{tikzcd}[column sep = large, row sep = large]
\textnormal{H}^d_{\textnormal{cts}}(I_1, \tau^\vee) \ar[r, "\Phi^d", "\sim"'] \ar[d, "\textnormal{res}^{I_1}_{I_g}"'] & \textnormal{H}^0(I_1, \tau)^\vee \ar[d, "(\textnormal{cor}^{I_1}_{I_g})^\vee"] \\
\textnormal{H}^d_{\textnormal{cts}}(I_g, \tau^\vee) \ar[r, "\Phi^d", "\sim"'] & \textnormal{H}^0(I_g, \tau)^\vee \\
\textnormal{H}^d_{\textnormal{cts}}(I_g, \tau^\vee) \ar[r, "\Phi^d", "\sim"'] \ar[d, "g_*^{-1}"', "{\rotatebox{90}{$\sim$}}"]& \textnormal{H}^0(I_g, \tau)^\vee \ar[d, "\xi(g^{-1})\cdot (g_*)^\vee", "{\rotatebox{90}{$\sim$}}"']\\
\textnormal{H}^d_{\textnormal{cts}}(I_{g^{-1}}, \tau^\vee) \ar[r, "\Phi^d", "\sim"']  & \textnormal{H}^0(I_{g^{-1}}, \tau)^\vee \\
\textnormal{H}^d_{\textnormal{cts}}(I_{g^{-1}}, \tau^\vee) \ar[r, "\Phi^d", "\sim"'] \ar[d, "\textnormal{cor}^{I_1}_{I_{g^{-1}}}"'] & \textnormal{H}^0(I_{g^{-1}}, \tau)^\vee \ar[d, "(\textnormal{res}^{I_1}_{I_{g^{-1}}})^\vee"] \\
\textnormal{H}^d_{\textnormal{cts}}(I_1, \tau^\vee) \ar[r, "\Phi^d", "\sim"'] & \textnormal{H}^0(I_1, \tau)^\vee \\
\end{tikzcd}
\end{center}

In order to prove the commutativity of the first and third diagrams, it suffices to assume $\tau$ is a finite $I_1$-module, and to replace $\tau^\vee$ with $\Hom(\tau,\fI)$ (this follows from \cite[Prop. 3.6.1]{symondsweigel}; note also that as representations of $I_1$, we have $\tau^\vee \cong \tau^\vee \otimes_C \ffi_C$). The claimed commutativity is then a consequence of the definition of $\Phi^d$ given in Section 4.5 of \emph{op. cit.}, the remark following \cite[\S I.3.5, Prop. 18]{serre:galoiscoh}, and its dual statement.

In order to prove the commutativity of the second diagram, fix a presentation $\tau = \varinjlim_i \tau_i$, where the $\tau_i$ are finite-dimensional $I_g$-subrepresentations of $\tau$.  Dualizing, we have $\tau^\vee \cong \varprojlim_i \tau_i^\vee$, and we let 
$$\textnormal{pr}_i: \textnormal{H}^d_{\textnormal{cts}}(I_g, \tau^\vee\otimes_C \ffi_C) \longrightarrow \textnormal{H}^d(I_g, \tau_i^\vee\otimes_C \ffi_C)$$
denote the map associated to the surjection $\tau^\vee \otimes_C \ffi_C \longtwoheadrightarrow \tau_i^\vee \otimes_C \ffi_C$.  Given $x_d \in \textnormal{H}^d_{\textnormal{cts}}(I_g, \tau^\vee \otimes_C \ffi_C)$ and $y_0 \in \textnormal{H}^0(I_g, \tau)$, there exists an index $i$ such that $y_0 \in \textnormal{H}^0(I_g, \tau_i)$.  The upper horizontal map $\Phi^d$ is then given by
$$\Phi^d(x_d)(y_0) = \textnormal{tr}_{I_g}\big(\textnormal{H}^d(\textnormal{ev})(\textnormal{pr}_i(x_d) \smile y_0)\big).$$

Applying $g^{-1}$ to the presentation above, we have that $\varinjlim_i g^{-1}. \tau_i$ is a presentation of $\tau$ by finite dimensional $I_{g^{-1}}$-subrepresentations.  We let
$$\textnormal{pr}_i': \textnormal{H}^d_{\textnormal{cts}}(I_{g^{-1}}, \tau^\vee \otimes_C \ffi_C) \longrightarrow \textnormal{H}^d(I_{g^{-1}}, (g^{-1}.\tau_i)^\vee \otimes_C \ffi_C)$$
denote the map associated to the surjection $\tau^\vee \otimes_C \ffi_C \longtwoheadrightarrow (g^{-1}.\tau_i)^\vee \otimes_C \ffi_C$.  The lower horizontal map $\Phi^d$ is defined analogously to the above, and we have $\textnormal{pr}_i' \circ g^{-1}_* = g^{-1}_* \circ \textnormal{pr}_i$ as maps $\textnormal{H}^d_{\textnormal{cts}}(I_{g}, \tau^\vee \otimes_C \ffi_C) \longrightarrow \textnormal{H}^d(I_{g^{-1}}, (g^{-1}.\tau_i)^\vee \otimes_C \ffi_C)$.

Now let $x_d \in \textnormal{H}^d_{\textnormal{cts}}(I_g, \tau^\vee \otimes_C \ffi_C), y_0 \in \textnormal{H}^0(I_{g^{-1}}, \tau)$, and choose $i$ so that $y_0 \in \textnormal{H}^0(I_{g^{-1}}, g^{-1}.\tau_i)$.  
We compute:
\begin{eqnarray*}
\Phi^d(g_*^{-1}x_d)(y_0) & = & \textnormal{tr}_{I_{g^{-1}}}\big(\textnormal{H}^d(\textnormal{ev})(\textnormal{pr}'_i(g_*^{-1}x_d) \smile y_0)\big) \\
 & = & \textnormal{tr}_{I_{g^{-1}}}\big(\textnormal{H}^d(\textnormal{ev})(g_*^{-1}(\textnormal{pr}_i(x_d) \smile g_*y_0))\big) \\
 & = & \textnormal{tr}_{I_{g^{-1}}}\big(g_*^{-1}\textnormal{H}^d(\textnormal{ev})(\textnormal{pr}_i(x_d) \smile g_*y_0)\big) \\
 & = & \Big(\textnormal{tr}_{I_{g^{-1}}} \circ g_*^{-1} \circ \textnormal{tr}_{I_g}^{-1}\Big) \circ \textnormal{tr}_{I_g}\big(\textnormal{H}^d(\textnormal{ev})(\textnormal{pr}_i(x_d) \smile g_*y_0)\big) \\
 & = & \psi(g^{-1})\Phi^d(x_d)(g_*y_0)\\
 & = & \big(\psi(g^{-1})\cdot g_*^\vee  \circ \Phi^d\big)(x_d)(y_0).
\end{eqnarray*}
This implies that the following diagram is commutative:
\begin{center}
\begin{tikzcd}[column sep = large, row sep = large]
\textnormal{H}^d_{\textnormal{cts}}(I_g, \tau^\vee \otimes_C \ffi_C) \ar[r, "\Phi^d", "\sim"'] \ar[d, "g_*^{-1}"', "{\rotatebox{90}{$\sim$}}"]& \textnormal{H}^0(I_g, \tau)^\vee \ar[d, "\psi(g^{-1})\cdot (g_*)^\vee", "{\rotatebox{90}{$\sim$}}"']\\
\textnormal{H}^d_{\textnormal{cts}}(I_{g^{-1}}, \tau^\vee \otimes_C \ffi_C) \ar[r, "\Phi^d", "\sim"']  & \textnormal{H}^0(I_{g^{-1}}, \tau)^\vee \\
\end{tikzcd}
\end{center}
Twisting by $\lambda^{-1}$ gives the claim.
\end{proof}

Applying the proposition, we get
$$h^{j,i}_{\textnormal{hor}}(D) \cong \textnormal{H}^j(I_1,Y^i) \cong \begin{cases} \big((B^{d})^{I_1}\big)^\vee(\xi) & \textnormal{if}~ j = 0, i = 0,\\ \big((A^{d - i})^{I_1}\big)^\vee(\xi) & \textnormal{if}~ j = 0, i > 0,\\ 0 & \textnormal{if}~ j > 0. \end{cases}$$
Finally, since \eqref{I1acyclic} is an $I_1$-acyclic resolution of $\pi$ and taking the $C$-linear dual is exact, we obtain
$$h^i_{\textnormal{vert}}(h^{j,\bullet}_{\textnormal{hor}}(D)) \cong \begin{cases} \textnormal{H}^{d - i}(I_1,\pi)^\vee(\xi) & \textnormal{if}~ j = 0, \\ 0 & \textnormal{if}~ j > 0. \end{cases}$$

We therefore see that the spectral sequence \eqref{ss1} degenerates at the $E_2$ page, and we obtain
\begin{equation}\label{cohoftot}
h^n(\textnormal{Tot}(D)^\bullet) \cong \textnormal{H}^{d - n}(I_1, \pi)^\vee(\xi).
\end{equation}

\vspace{5pt}

\subsubsection{Spectral Sequence --- II}

We now consider the decreasing filtration on $\textnormal{Tot}(D)^\bullet$ by columns:
$$F^r_{\textnormal{col}}\textnormal{Tot}(D)^s = \bigoplus_{t \geq r} D^{t, s - t}.$$
Once again, we obtain a convergent spectral sequence:
\begin{equation}\label{ss2}
E_2^{i,j} = h^i_{\textnormal{hor}}(h^{\bullet,j}_{\textnormal{vert}}(D)) \Longrightarrow h^{i + j}(\textnormal{Tot}(D)^\bullet)
\end{equation}

We examine the terms in the spectral sequence.  Since the functor $\textnormal{C}^i(I_1,-)$ is exact, equation \eqref{cohofY} implies $h^{i,j}_{\textnormal{vert}}(D) = \textnormal{C}^i(I_1,S^j(\pi))$.  Therefore, we have $h^i_{\textnormal{hor}}(h^{\bullet,j}_{\textnormal{vert}}(D)) \cong \textnormal{H}^i(I_1,S^j(\pi))$.  Combining this with the isomorphism \eqref{cohoftot}, the spectral sequence \eqref{ss2} becomes
$$\textnormal{H}^i\big(I_1,S^j(\pi)\big) \Longrightarrow \textnormal{H}^{d - i - j}(I_1,\pi)^\vee(\xi).$$

\begin{propn}
The spectral sequence 
\begin{equation}\label{mainss}
\textnormal{H}^i\big( I_1,S^j(\pi)\big) \Longrightarrow \textnormal{H}^{d - i - j}(I_1,\pi)^\vee(\xi).
\end{equation}
is $\cH$-equivariant.
\end{propn}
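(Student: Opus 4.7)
My plan is to lift each Hecke operator $\T_g$ to a chain-level endomorphism of the double complex $D$, verify that it preserves both filtrations, and then check that the induced actions on the $E_2$-page of \eqref{ss2} and on the abutment match those claimed. For each $g \in G$ and each smooth $G$-representation $V$, fix coset representatives for $I_1/(I_1 \cap g^{-1}I_1g)$ and define a chain endomorphism
$$\T_g : \textnormal{C}^\bullet(I_1, V) \xrightarrow{\res^{I_1}_{I_1 \cap gI_1g^{-1}}} \textnormal{C}^\bullet(I_1 \cap gI_1g^{-1}, V) \xrightarrow{g^{-1}_*} \textnormal{C}^\bullet(I_1 \cap g^{-1}I_1g, V) \xrightarrow{\cores^{I_1}_{I_1 \cap g^{-1}I_1g}} \textnormal{C}^\bullet(I_1, V).$$
This composition is a chain map with respect to the cocycle differential, is natural in $V$, and induces on cohomology the right $\cH$-action described in Subsection \ref{subsec:heckealgs}. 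Applying it coordinatewise to $D^{i,j} = \textnormal{C}^i(I_1, Y^j)$ yields a bidegree-$(0,0)$ endomorphism of the double complex that commutes with both horizontal (coboundary) and vertical (functorially induced from $Y^\bullet$) differentials.

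Because each $\T_g$ has bidegree $(0,0)$, it strictly preserves the row filtration $F^\bullet_{\textnormal{row}}$ and the column filtration $F^\bullet_{\textnormal{col}}$ on $\textnormal{Tot}(D)^\bullet$. Consequently both spectral sequences \eqref{ss1} and \eqref{ss2} inherit right $\cH$-actions compatible with all their differentials, and the abutment isomorphism $h^n(\textnormal{Tot}(D)^\bullet) \cong \textnormal{H}^{d-n}(I_1,\pi)^\vee(\xi)$ becomes $\cH$-equivariant once we match the induced actions on both sides. For the column spectral sequence \eqref{ss2}, the vertical cohomology $h^{i,j}_{\textnormal{vert}}(D)$ identifies with $\textnormal{C}^i(I_1, S^j(\pi))$ equivariantly, and horizontal cohomology then gives the standard Hecke action on the $E_2$-page $\textnormal{H}^i(I_1, S^j(\pi))$.

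For the row spectral sequence \eqref{ss1}, which collapses by Lemma \ref{lem-acyclic2}, the surviving complex $h^{0,\bullet}_{\textnormal{hor}}(D) = (Y^\bullet)^{I_1}$ carries the natural Hecke action on invariants. Proposition \ref{prop:orient} provides an $\cH$-equivariant isomorphism of complexes $(Y^\bullet)^{I_1} \cong ((A^{d-\bullet})^{I_1})^\vee(\xi)$ (with the degree-$0$ term interpreted via $B^d$ in place of $A^d$), whose cohomology produces the abutment $\textnormal{H}^{d-n}(I_1,\pi)^\vee(\xi)$ with the claimed twisted action. Combining these identifications shows that \eqref{mainss} is $\cH$-equivariant.

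The main subtlety is the orientation twist $\xi$, which enters exactly at the invocation of Proposition \ref{prop:orient} and propagates unchanged through the identification of the abutment. The chain-level concern that $\T_g \T_h$ may differ from $\T_{gh}$ by a chain homotopy rather than holding strictly is irrelevant, since the induced action on cohomology—and hence on every page of each spectral sequence—is the standard Hecke action by construction.
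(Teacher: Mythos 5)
Your approach is essentially the same as the paper's, with a cosmetic difference in organization. The paper keeps the three operations $\textnormal{res}^{I_1}_{I_g}$, $g_*^{-1}$, $\textnormal{cor}^{I_1}_{I_{g^{-1}}}$ separate as morphisms between the double complexes $D_K$ for $K \in \{I_1, I_g, I_{g^{-1}}\}$, observes each preserves both filtrations and hence induces morphisms of spectral sequences, and only composes them at the level of the $E_1$/$E_2$ pages; you compose them at the chain level into a single endomorphism of $D_{I_1}$ and then pass to spectral sequences. Both arguments then hinge on the same two observations: that the induced operator on the $E_1$ page (row filtration) or $E_2$ page (column filtration) is the standard Hecke action, whose $\cH$-relations are known, and that the differentials on these pages are $\cH$-equivariant because they come from $G$-equivariant maps.

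One small caution about your final paragraph: the phrase ``both spectral sequences inherit right $\cH$-actions compatible with all their differentials'' is slightly overstated, and your remark about chain homotopies being ``irrelevant'' glosses over a real filtration subtlety. The chain-level $\T_g$ depends on a choice of coset representatives; two choices differ by a chain homotopy $s$ of bidegree $(-1,0)$, which preserves the row filtration (so the $E_1$ page of \eqref{ss1} is unambiguous) but shifts the column filtration by $-1$ (so the $E_1$ page of \eqref{ss2} is choice-dependent, and only from $E_2$ onward is the operator canonical). This is harmless — you only need $\cH$-module structures from $E_1$ (row) and $E_2$ (column) onward, and there the operators satisfy the Hecke relations because they literally are the Hecke operators on $\textnormal{H}^j(I_1,Y^i)$ and $\textnormal{H}^i(I_1,S^j(\pi))$ — but it is worth saying explicitly rather than invoking a blanket ``irrelevant,'' and it is one reason the paper phrases the argument in terms of morphisms of spectral sequences rather than a chain-level Hecke action.
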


\begin{proof}
Given a compact open subgroup $K$ of $I_1$, let $D_K := D_K^{\bullet,\bullet}$ denote the double complex defined by
$$D_K^{i,j} := \textnormal{C}^i(K,Y^j),$$
with the differentials normalized as above.  For $g\in G$, the following maps (defined on the level of cocycles)
\begin{alignat*}{3}
\textnormal{res}^{I_1}_{I_g} :  \quad &&  D_{I_1}^{\bullet,\bullet}  &  \longrightarrow  && ~ D_{I_g}^{\bullet,\bullet} \\
g_*^{-1}: \quad && D_{I_g}^{\bullet,\bullet}  &  \longrightarrow  && ~ D_{I_{g^{-1}}}^{\bullet,\bullet} \\
\textnormal{cor}^{I_1}_{I_{g^{-1}}} :  \quad &&  D_{I_{g^{-1}}}^{\bullet,\bullet}  &  \longrightarrow  && ~ D_{I_1}^{\bullet, \bullet}
\end{alignat*}
commute with both the vertical and horizontal differentials.  By taking direct sums, we obtain maps between totalizations
\begin{alignat*}{3}
\textnormal{res}^{I_1}_{I_g} : \quad  &&  \textnormal{Tot}(D_{I_1})^\bullet  &  \longrightarrow  && ~ \textnormal{Tot}(D_{I_g})^\bullet \\
g_*^{-1}: \quad &&  \textnormal{Tot}(D_{I_g})^\bullet  &  \longrightarrow  && ~ \textnormal{Tot}(D_{I_{g^{-1}}})^\bullet \\
\textnormal{cor}^{I_1}_{I_{g^{-1}}} : \quad &&  \textnormal{Tot}(D_{I_{g^{-1}}})^\bullet  &  \longrightarrow  && ~ \textnormal{Tot}(D_{I_1})^\bullet
\end{alignat*}
which commute with the differential.  Consequently, the above maps induce morphisms between $h^n(\textnormal{Tot}(D_K)^\bullet)$ for varying $K$.  Furthermore, the above maps preserve both the filtration by rows $F^\bullet_{\textnormal{row}}$ and the filtration by columns $F^\bullet_{\textnormal{col}}$ on $\textnormal{Tot}(D_K)^\bullet$.  Exactly as above for the group $I_1$, the spectral sequences associated to these filtrations degenerate at the $E_{\max\{2, d + 1\}}$ page, and converge to $h^n(\textnormal{Tot}(D_K)^\bullet)$.  Since the differentials in the spectral sequences associated to $F^\bullet_{\textnormal{row}}$ and $F^\bullet_{\textnormal{col}}$ are constructed from the differential on $\textnormal{Tot}(D_K)^\bullet$, we conclude that the maps $\textnormal{res}^{I_1}_{I_g}, g_*^{-1}$, and $\textnormal{cor}^{I_1}_{I_{g^{-1}}}$ induce morphisms between the spectral sequences associated to each filtration.

Consider first the filtration $F^\bullet_{\textnormal{row}}$ on $\textnormal{Tot}(D_{I_1})^\bullet$.  The $E_1$ page of associated spectral sequence is given by
$$E_1^{i,j} = \textnormal{H}^j(I_1,Y^i),$$
with limit $h^{i + j}(\textnormal{Tot}(D_{I_1})^\bullet)$.  The composition $\textnormal{cor}^{I_1}_{I_{g^{-1}}} \circ g_*^{-1} \circ \textnormal{res}^{I_1}_{I_g}$ gives the action of the Hecke operator $\T_g$ on $\textnormal{H}^j(I_1,Y^i)$, and this extends to an action of $\cH$ on $\textnormal{H}^j(I_1,Y^i)$.  Therefore, the $E_1$ page consists of $\cH$-modules with $\cH$-equivariant differentials, and the same is then true of all subsequent pages.  Since the maps $\textnormal{res}^{I_1}_{I_g}, g_*^{-1}$, and $\textnormal{cor}^{I_1}_{I_{g^{-1}}}$ give morphisms of spectral sequences, we conclude that $E_2^{n,0} = E_\infty^{n,0} \cong h^n(\textnormal{Tot}(D_{I_1})^\bullet)$ is an $\cH$-module, and Proposition \ref{prop:orient} implies we have a $\cH$-equivariant isomorphism $h^n(\textnormal{Tot}(D_{I_1})^\bullet) \cong \textnormal{H}^{d - n}(I_1,\pi)^\vee(\xi)$.

Consider now the filtration $F^\bullet_{\textnormal{col}}$ on $\textnormal{Tot}(D_{I_1})^\bullet$.  The $E_2$ page of associated spectral sequence is given by
$$E_2^{i,j} = \textnormal{H}^i(I_1, S^j(\pi)),$$
with limit $h^{i + j}(\textnormal{Tot}(D_{I_1})^\bullet)$.  As above, the action of $\textnormal{cor}^{I_1}_{I_{g^{-1}}} \circ g_*^{-1} \circ \textnormal{res}^{I_1}_{I_g}$ on $\textnormal{H}^i(I_1, S^j(\pi))$ extends to an action of the entire Hecke algebra $\cH$, and all subsequent pages of the spectral sequence consist of $\cH$-modules with $\cH$-equivariant differentials.  Since the maps $\textnormal{res}^{I_1}_{I_g}, g_*^{-1}$, and $\textnormal{cor}^{I_1}_{I_{g^{-1}}}$ give morphisms of spectral sequences, we obtain a spectral sequence of $\cH$-modules
$$\textnormal{H}^i(I_1, S^j(\pi)) \Longrightarrow h^{i + j}(\textnormal{Tot}(D_{I_1})^\bullet);$$
composing with the $\cH$-equivariant isomorphism $h^{i + j}(\textnormal{Tot}(D_{I_1})^\bullet) \cong \textnormal{H}^{d - i - j}(I_1,\pi)^\vee(\xi)$ concludes the proof.  
\end{proof}

\begin{rmk}
The existence of the above spectral sequence may also be proved without using continuous cohomology.  This approach will appear in forthcoming work of Schneider--Sorensen.  
\end{rmk}

\vspace{10pt}

\section{Examples}
\label{sec:examples}

We now compute some examples using the spectral sequences above.  As in previous sections, we let $\bG$ denote a connected reductive group over $F$, and $I_1$ a pro-$p$-Iwahori subgroup of $G = \bG(F)$.  We assume throughout that $I_1$ is torsion-free, and set $d := \dim_{\bbQ_p}(G) = \textnormal{cd}(I_1)$.

\vspace{5pt}

\subsection{Finite-dimensional representations}\label{findim}

Let $\pi$ denote a finite-dimensional representation of $G$.  Then the Pontryagin (or $C$-linear) dual $\pi^\vee$ is once again a smooth $G$-representation, and we have
$$S^j(\pi) = \begin{cases}\pi^\vee & \textnormal{if}~ j = 0, \\ 0 & \textnormal{if}~ j > 0,\end{cases}$$
(cf. \cite[Cor. 3.16]{kohlhaase:duality}).  The spectral sequence \eqref{mainss} thus gives an isomorphism of $\cH$-modules
$$\textnormal{H}^i(I_1, \pi^\vee) \cong \textnormal{H}^{d - i}(I_1, \pi)^\vee(\xi).$$
In particular, letting $\pi \cong \mathbf{1}_G$ denote the trivial $G$-representation, we obtain isomorphisms of $\cH$-modules
$$\textnormal{H}^i(I_1, \mathbf{1}_G) \cong \textnormal{H}^{d - i}(I_1, \mathbf{1}_G)^\vee(\xi)$$
and
$$\textnormal{H}^d(I_1, \mathbf{1}_G) \cong \textnormal{H}^{0}(I_1, \mathbf{1}_G)^\vee(\xi) \cong \chi_{\textnormal{triv}}^\vee(\xi) \cong \chi_{\textnormal{triv}}(\xi),$$
where $\chi_{\triv}$ denotes the trivial character of $\cH$ (see \cite[\S 2.5.4]{olliviervigneras}).

On the other hand, if $\pi$ is an irreducible admissible \emph{infinite}-dimensional representation of $G$, then \cite[Prop. 3.9]{kohlhaase:duality} shows that $S^0(\pi) = 0$ (see also \cite[Thm. 6.4]{ahv:pro-p}).  The spectral sequence \eqref{mainss} then shows in this case that $\textnormal{H}^d(I_1, \pi) = 0$.

\vspace{5pt}

\subsection{Parabolic induction}
\label{parabindexs}

In this subsection we suppose $\bP$ is a rational parabolic subgroup of $\bG$ with rational Levi component $\bM$, and let $\chi: M \longrightarrow C^\times$ denote a smooth character of $M$, which we inflate to $P$.  We have $\chi^{I_{M,1}} = \chi$, so that $\chi$ inherits the structure of a right $\cH_M$-module.  We will use the letter $\chi$ to refer to both the character of $M$ and the associated $\cH_M$-module; the meaning should be clear from context.

We consider the parabolically induced representation $\pi := \Ind_P^G(\chi)$.  By \cite[Prop. 5.4]{kohlhaase:duality}, we have
$$S^j(\pi)  =  \begin{cases} \Ind_P^G(\chi^{-1}\chi_P) & \textnormal{if}~ j = \dim_{\bbQ_p}(G/P), \\ 0 & \textnormal{if}~ j \neq \dim_{\bbQ_p}(G/P),\end{cases}$$
where $\chi_P: P \longrightarrow C^\times$ denotes the dualizing character (see the remark below).  The spectral sequence \eqref{mainss} therefore collapses to give an isomorphism of $\cH$-modules
$$\textnormal{H}^i\big(I_1, \Ind_P^G(\chi^{-1}\chi_P)\big) \cong \textnormal{H}^{\dim_{\bbQ_p}(P) - i}\big(I_1, \Ind_P^G(\chi)\big)^\vee(\xi).$$
In particular, applying \cite[Prop. 4.4]{olliviervigneras} gives an isomorphism of $\cH$-modules
$$\textnormal{H}^{\dim_{\bbQ_p}(P)}\big(I_1, \Ind_P^G(\chi)\big) \cong \textnormal{H}^0\big(I_1, \Ind_P^G(\chi^{-1}\chi_P)\big)^\vee(\xi) \cong \Ind_{\cH_M}^{\cH}(\chi^{-1}\chi_P)^\vee(\xi).$$
Specializing further to the case $\bG = \bG\bL_{n/\bbQ_p}$ and $\bP = \bB$ the upper-triangular Borel subgroup, the results of \cite{koziol:h1ps} can be used to determine $\textnormal{H}^{\dim_{\bbQ_p}(B) - 1}(I_1, \Ind_B^G(\chi))$.

\begin{rmk}
\label{rmkdualizing}
The formula for $\chi_P$ given in \cite[Cor. 5.3]{kohlhaase:duality} seems to be inconsistent with other results in the literature.  For example, suppose $p \geq 5$, $\bG = \bG\bL_{2/\bbQ_p}$, and $\bB = \bT\ltimes \bU$ is the upper-triangular Borel subgroup of $\bG$.  Let $\overline{\alpha}:T \longrightarrow C^\times$ denote the character $\sm{xp^a}{0}{0}{yp^b} \longmapsto \overline{xy^{-1}}$, where $x,y\in \bbZ_p^\times$, $a,b\in \bbZ$.  Corollary 5.3 of \emph{op. cit.} would then imply that we have a nonsplit extension
$$0 \longrightarrow \mathbf{1}_G \longrightarrow S^1(\textnormal{St}_G) \longrightarrow \Ind_B^G(\overline{\alpha}^{-1}) \longrightarrow 0,$$
where $\textnormal{St}_G$ denotes the Steinberg representation of $G$ (cf. \emph{op. cit}, Proposition 5.7).  However, \cite[Prop. 4.3.13]{emerton:ordII} implies that $\Ext^1_{G}(\Ind_B^G(\overline{\alpha}^{-1}), \mathbf{1}_G) = 0$, noting that the results of \cite{emerton:ordII} are stated in terms of the lower triangular Borel subgroup $\bB^-$.  (The existence of such a nonsplit extension would also imply that we have an injection $\Ind_{\cH_T}^{\cH}(\overline{\alpha}^{-1}) \longhookrightarrow \textnormal{H}^1(I_1, \mathbf{1}_G)$, which contradicts \cite[Thm. 6.4(a)]{koziol:h1triv}.)  We shall therefore assume that, in the $\bbQ_p$-split case, the dualizing character is given by 
\begin{equation}\label{dualcharacter}
\chi_P(mn) = \prod_{\alpha \in \Phi^+ \smallsetminus \Phi_M^+} \alpha(m) |\alpha(m)|_p,
\end{equation}
where $mn \in P = M \ltimes N$.  (We are being sloppy with notation on the right hand side: the expression ``$\prod_{\alpha \in \Phi^+ \smallsetminus \Phi_M^+} \alpha(m) |\alpha(m)|_p$'' denotes the unique extension to $M$ of the character 
$$T \ni t \longmapsto \prod_{\alpha \in \Phi^+ \smallsetminus \Phi_M^+} \alpha(t) |\alpha(t)|_p.\bigg)$$
\end{rmk}

\vspace{5pt}

\subsection{$\textrm{GL}_2(\bbQ_p)$}

We now assume $\bG = \bG\bL_{2/\bbQ_p}$ with $p \geq 5$, $\bB = \bT\ltimes \bU$ is the upper-triangular Borel subgroup, and $I_1$ is the ``upper-triangular mod $p$'' pro-$p$ Iwahori subgroup.  In this subsection we determine almost all $\textnormal{H}^i(I_1,\pi)$, where $\pi$ denotes an irreducible smooth $G$-representation.  Note that the assumption $p \geq 5$ implies $I_1$ is torsion-free, and therefore has cohomological dimension $\dim_{\bbQ_p}(G) = 4$.  Since $\textnormal{H}^i(I_1, \pi \otimes \chi\circ\det) \cong \textnormal{H}^i(I_1, \pi) \otimes \chi\circ\det$, we may twist our irreducible representation as is convenient.

\vspace{5pt}

\subsubsection{Trivial representation}\label{gl2triv}

Suppose first that $\pi = \mathbf{1}_G$ is the trivial representation.  By Subsection \ref{findim}, we have
\begin{eqnarray*}
\textnormal{H}^0(I_1, \mathbf{1}_G) & \cong & \chi_{\textnormal{triv}}, \\
\textnormal{H}^4(I_1, \mathbf{1}_G) & \cong & \chi_{\textnormal{triv}}.
\end{eqnarray*}
Theorem 6.4(a) of \cite{koziol:h1triv} gives the structure of $\textnormal{H}^1$, and we use Subsection \ref{findim} to determine $\textnormal{H}^3$:
\begin{eqnarray*}
\textnormal{H}^1(I_1, \mathbf{1}_G) & \cong & \chi_{\textnormal{triv}} \oplus \Ind_{\cH_T}^{\cH}(\overline{\alpha}),\\
\textnormal{H}^3(I_1, \mathbf{1}_G) & \cong & \chi_{\textnormal{triv}}^\vee \oplus \Ind_{\cH_T}^{\cH}(\overline{\alpha})^\vee\\
 & \cong & \chi_{\textnormal{triv}} \oplus \Ind_{\cH_T}^{\cH}(\overline{\alpha}),
\end{eqnarray*}
where the last isomorphism follows from \cite[Thm. 4.9]{abe:involutions}, and $\overline{\alpha}$ is as in Remark \ref{rmkdualizing}.

It therefore suffices to compute $\textnormal{H}^2$.  Note first that by \cite[Eqn. (166)]{paskunas:montrealfunctor}, we have $\ord^G_{B^-}(\mathbf{1}_G) = 0$ and $\textnormal{R}^1\ord^G_{B^-}(\mathbf{1}_G) = \overline{\alpha}$.  By Corollary \ref{ordss}, we obtain isomorphisms of $\cH_T$-modules
$$\cR^{\cH}_{\cH_T}\big(\textnormal{H}^2(I_1, \mathbf{1}_G)\big) \cong \textnormal{H}^1(I_{T,1},\overline{\alpha}) \cong \textnormal{H}^1(I_{T,1}, \mathbf{1}_T) \otimes_C \overline{\alpha} \cong \overline{\alpha}^{\oplus 2},$$
where the last isomorphism follows from $I_{T,1} \cong (1 + p\bbZ_p)^{\oplus 2} \cong \bbZ_p^{\oplus 2}$.  This implies we have an injection 
\begin{equation}\label{h2triv}
\Ind_{\cH_T}^{\cH}(\overline{\alpha})^{\oplus 2} \longhookrightarrow \textnormal{H}^2(I_1, \mathbf{1}_G).
\end{equation}
On the other hand, the assumption $p \geq 5$ implies that we may write $I_1 \cong I_1' \times \cZ_1$, where $I_1' := I_1 \cap \textnormal{SL}_2(\bbQ_p)$ and $\cZ_1$ denotes the pro-$p$ part of the center $\cZ$.  The K\"unneth formula then gives an isomorphism of $C$-vector spaces
$$\textnormal{H}^2(I_1, C) \cong \bigoplus_{i + j = 2} \textnormal{H}^i(I_1', C)\otimes_C \textnormal{H}^j(\cZ_1, C).$$
Since $\cZ_1 \cong 1 + p\bbZ_p \cong \bbZ_p$ has cohomological dimension 1, we have $\dim_C(\textnormal{H}^0(\cZ_1, C)) = \dim_C(\textnormal{H}^1(\cZ_1, C)) = 1$.  By the calculation of $\textnormal{H}^i(I_1', \mathbf{1}_{\textnormal{SL}_2(\bbQ_p)})$ in Subsubsection \ref{sl2triv} below, we have $\dim_C(\textnormal{H}^1(I_1', C)) = \dim_C(\textnormal{H}^2(I_1', C)) = 2$.  Consequently, we get $\dim_C(\textnormal{H}^2(I_1,C)) = 4$, and the injection \eqref{h2triv} must be an isomorphism.

\vspace{5pt}

\subsubsection{Principal series}\label{gl2ps}

Now let $\chi:T \longrightarrow C^\times$ denote a smooth character, and let $\Ind_B^G(\chi)$ denote the parabolically induced representation.  By \cite[Prop. 4.4]{olliviervigneras}, Subsection \ref{parabindexs}, Remark \ref{rmkdualizing}, and the fact that $\dim_{\bbQ_p}(B) = 3$, we have
\begin{eqnarray*}
\textnormal{H}^0\big(I_1, \Ind_B^G(\chi)\big) & \cong & \Ind_{\cH_T}^{\cH}(\chi) \\
\textnormal{H}^3\big(I_1, \Ind_B^G(\chi)\big) & \cong & \Ind_{\cH_T}^{\cH}(\chi^{-1}\overline{\alpha})^\vee
\end{eqnarray*}

By \cite[Props. 4.5, 4.9]{koziol:h1ps} and \cite[Cor. 3.3, Prop. 4.3]{abe:involutions}, we have a short exact sequence
$$ 0 \longrightarrow \Ind_{\cH_T}^{\cH}(\chi)^{\oplus 2} \longrightarrow \textnormal{H}^1\big(I_1, \Ind_B^G(\chi)\big) \longrightarrow \Ind_{\cH_T}^{\cH}(\chi^{-1}\overline{\alpha})^\vee \longrightarrow 0.$$
Lemma 5.13 of \cite{koziol:h1ps} shows that this exact sequence is nonsplit if and only if $\chi = \chi^s\overline{\alpha}$ (where $\chi^s$ denotes the character $t \longmapsto \chi(\sm{0}{1}{1}{0}t\sm{0}{1}{1}{0})$~).  In this case $\Ind_{\cH_T}^{\cH}(\chi^{-1}\overline{\alpha})$ is simple and $\Ind_{\cH_T}^{\cH}(\chi^{-1}\overline{\alpha})^\vee \cong \Ind_{\cH_T}^{\cH}(\chi^s\overline{\alpha}) \cong \Ind_{\cH_T}^{\cH}(\chi)$.  

Applying the isomorphisms in Subsection \ref{parabindexs} to the exact sequence above, we see that
$$0 \longrightarrow \Ind_{\cH_T}^{\cH}(\chi) \longrightarrow \textnormal{H}^2\big(I_1, \Ind_B^G(\chi)\big) \longrightarrow \big(\Ind_{\cH_T}^{\cH}(\chi^{-1}\overline{\alpha})^{\oplus 2}\big)^\vee \longrightarrow 0.$$
As above, this short exact sequence is nonsplit if and only if $\chi = \chi^s\overline{\alpha}$, in which case $\Ind_{\cH_T}^{\cH}(\chi^{-1}\overline{\alpha})$ is simple and $\Ind_{\cH_T}^{\cH}(\chi^{-1}\overline{\alpha})^\vee \cong \Ind_{\cH_T}^{\cH}(\chi^s\overline{\alpha}) \cong \Ind_{\cH_T}^{\cH}(\chi)$.

(For generic $\chi$, one can also use Corollary \ref{ordss} along with \cite[Thm. 4.2.12(1)]{emerton:ordII} to determine all $\textnormal{H}^i(I_1, \Ind_B^G(\chi))$, without appealing to the calculations in \cite{koziol:h1ps}.)

\vspace{5pt}

\subsubsection{Steinberg}

Let $\textnormal{St}_G$ denote the Steinberg representation of $G$, defined by the exact sequence
$$0 \longrightarrow \mathbf{1}_G \longrightarrow \Ind_B^G(\mathbf{1}_T) \longrightarrow \textnormal{St}_G \longrightarrow 0.$$
We also let $\cE$ denote the unique nonsplit extension of $\Ind_B^G(\overline{\alpha})$ by $\mathbf{1}_G$ (cf. \cite[Prop. 4.3.13]{emerton:ordII}):
$$0 \longrightarrow \mathbf{1}_G \longrightarrow \cE \longrightarrow \Ind_B^G(\overline{\alpha}) \longrightarrow 0.$$
By \cite[Prop. 5.7]{kohlhaase:duality}, we have
$$S^j(\textnormal{St}_G)  =  \begin{cases} \cE & \textnormal{if}~ j = 1, \\ 0 & \textnormal{if}~ j \neq 1,\end{cases}$$
and therefore the spectral sequence \eqref{mainss} collapses to give
\begin{equation}\label{steinbergdual}
\textnormal{H}^i(I_1, \cE) \cong \textnormal{H}^{3 - i}(I_1, \textnormal{St}_G)^\vee.
\end{equation}

Note first that we have
\begin{eqnarray*}
\textnormal{H}^0(I_1, \textnormal{St}_G) & \cong & \chi_{\textnormal{sign}}^\star, \\
\textnormal{H}^0(I_1, \cE) & \cong & \chi_{\textnormal{triv}},
\end{eqnarray*}
where $\chi_{\sign}^\star := \chi_{\sign}\otimes_C \textnormal{nr}_{-1}\circ\det$, $\chi_{\sign}$ denotes the sign character of $\cH$ (see \cite[Rmks. 2.23, 2.24(1)]{olliviervigneras}), and $\textnormal{nr}_{-1}:\bbQ_p^\times \longrightarrow C^\times$ denotes the unramified character sending $p$ to $-1$.  The first isomorphism comes from \cite[Thm. 4.17]{ahv:pro-p} and \cite[Prop. 3.12]{abe:inductions} (or \cite[Thm. 4.2]{vigneras:gl2}), while the second follows from the fact that $\cE$ is a nonsplit extension.  Applying \eqref{steinbergdual} gives
$$\textnormal{H}^3(I_1, \textnormal{St}_G) \cong \textnormal{H}^0(I_1, \cE)^\vee \cong \chi_{\triv}^\vee \cong \chi_{\triv}.$$

We now discuss the remaining cohomology groups.  Applying the functor of invariants to the exact sequence defining $\textnormal{St}_G$ gives an exact sequence
$$0 \longrightarrow \chi_{\textnormal{triv}} \longrightarrow \Ind_{\cH_T}^\cH(\mathbf{1}_T) \longrightarrow \chi_{\textnormal{sign}}^\star \longrightarrow 0.$$
Substituting the results of Subsubsections \ref{gl2triv} and \ref{gl2ps}, the long exact sequence for the higher cohomology groups is
\begin{center}
\begin{tikzcd}
0\ar[r]  & \Ind_{\cH_T}^{\cH}(\overline{\alpha})\oplus \chi_{\triv} \ar[r] & \Ind_{\cH_T}^{\cH}(\mathbf{1}_T)^{\oplus 2} \oplus \Ind_{\cH_T}^{\cH}(\overline{\alpha}) \ar[r] \arrow[d, phantom, ""{coordinate, name=Z}]   & \textnormal{H}^1(I_1,\textnormal{St}_G) \arrow[dll, rounded corners=8pt, to path= { -- ([xshift=3ex]\tikztostart.east) |- (Z) [near end]\tikztonodes -| ([xshift=-3ex]\tikztotarget.west) -- (\tikztotarget)}] & \\
 & \Ind_{\cH_T}^{\cH}(\overline{\alpha})^{\oplus 2} \ar[r] & \Ind_{\cH_T}^{\cH}(\mathbf{1}_T) \oplus \Ind_{\cH_T}^{\cH}(\overline{\alpha})^{\oplus 2} \ar[r] \arrow[d, phantom, ""{coordinate, name=Z}]   & \textnormal{H}^2(I_1, \textnormal{St}_G) \arrow[dll, rounded corners=8pt, to path= { -- ([xshift=3ex]\tikztostart.east) |- (Z) [near end]\tikztonodes -| ([xshift=-3ex]\tikztotarget.west) -- (\tikztotarget)}] & \\
 &  \Ind_{\cH_T}^{\cH}(\overline{\alpha})\oplus \chi_{\triv}  \ar[r] & \Ind_{\cH_T}^{\cH}(\overline{\alpha}) \ar[r, "0"] \arrow[d, phantom, ""{coordinate, name=Z}]   & \chi_{\triv} \arrow[dll, rounded corners=8pt, to path= { -- ([xshift=3ex]\tikztostart.east) |- (Z) [near end]\tikztonodes -| ([xshift=-3ex]\tikztotarget.west) -- (\tikztotarget)}] &  \\
 & \chi_{\triv} \ar[r] & 0   &   & 
\end{tikzcd}
\end{center}
We therefore see that $\textnormal{H}^1(I_1,\textnormal{St}_G)$ is an extension
$$0 \longrightarrow \Ind_{\cH_T}^{\cH}(\mathbf{1}_T) \oplus \chi_{\textnormal{sign}}^\star \longrightarrow \textnormal{H}^1(I_1,\textnormal{St}_G) \longrightarrow \Ind_{\cH_T}^{\cH}(\overline{\alpha})^{\oplus k} \longrightarrow 0,$$
where $0\leq k \leq 2$.  Since the functor $\cR^{\cH}_{\cH_T}$ is exact on finite-dimensional $\cH$-modules, \cite[Lem. 5.2, Thm. 5.20]{abe:inductions} imply that
$$0 \longrightarrow \mathbf{1}_T^{\oplus 2} \longrightarrow \cR_{\cH_T}^{\cH}\big(\textnormal{H}^1(I_1,\textnormal{St}_G)\big) \longrightarrow  \overline{\alpha}^{\oplus k} \longrightarrow 0.$$
On the other hand, by \cite[Thm. 4.2.12(2)]{emerton:ordII} we have $\ord^{G}_{B^-}(\textnormal{St}_G) \cong \mathbf{1}_T$ and $\textnormal{R}^1\ord^{G}_{B^-}(\textnormal{St}_G) = 0$.  Hence, Corollary \ref{ordss} gives an isomorphism of $\cH_T$-modules
$$\cR_{\cH_T}^{\cH}\big(\textnormal{H}^1(I_1,\textnormal{St}_G)\big) \cong \textnormal{H}^1(I_{T,1}, \mathbf{1}_T) \cong \mathbf{1}_T^{\oplus 2}.$$
Consequently $k$ must be equal to 0, and from the exact sequence above we deduce that 
$$\textnormal{H}^1(I_1,\textnormal{St}_G) \cong  \Ind_{\cH_T}^{\cH}(\mathbf{1}_T) \oplus \chi_{\textnormal{sign}}^\star,$$
and that $\textnormal{H}^2(I_1,\textnormal{St}_G)$ is a (possibly split) extension of $\chi_{\triv}$ by $\Ind_{\cH_T}^{\cH}(\mathbf{1}_T)$.  (Note also that this extension splits when restricted to the pro-$p$-Iwahori--Hecke algebra of $\textnormal{SL}_2(\bbQ_p)$.)

\vspace{5pt}

\subsubsection{Supersingular representations}\label{gl2ssing}

We now discuss supersingular representations.

For $0 \leq r \leq p - 1$, we let $\pi(r,0,1)$ denote the $G$-representation given by
$$\cind_{\cZ\textnormal{GL}_2(\bbZ_p)}^{\textnormal{GL}_2(\bbQ_p)}\big(\textnormal{Sym}^r(C^{\oplus 2})\big)\Big/\T,$$
where $\textnormal{GL}_2(\bbZ_p)$ acts on $\textnormal{Sym}^r(C^{\oplus 2})$ via reduction mod $p$, the matrix $\sm{p}{0}{0}{p} \in \cZ$ acts trivially on $\textnormal{Sym}^r(C^{\oplus 2})$, and $\T$ denotes a certain spherical Hecke operator.  Up to twist, these representations constitute all absolutely irreducible supersingular $G$-representations over $C$.  Moreover, the two-dimensional space $\pi(r,0,1)^{I_1}$ is simple as an $\cH$-module, and we define
$$\fm(r,0,1) := \pi(r,0,1)^{I_1} = \textnormal{H}^0(I_1, \pi(r,0,1)).$$

The $\cH$-module $\textnormal{H}^1(I_1, \pi(r,0,1))$ is computed in \cite[Prop. 10.5, Eqn. (49)]{paskunas:exts}.  More precisely, the dimension calculations of \emph{op. cit.} are slightly different than ours, as they are stated in the case where the central character is fixed throughout.  When the central character is not fixed, the relevant differences are the following (all references are to \cite{paskunas:exts}): in Lemmas 5.7, 5.8, and 7.1, we have $\dim(\Ext^1_{I \cap B}(\chi, \chi)) = 2$; in Corollary 7.4, we have $\Ext^2_I(\chi,\chi) = 0$ and $\dim(\Ext^1_I(\chi,N)) = 2$ (using the results of Subsubsection \ref{gl2triv}); in Theorem 7.9, we have $\dim(\Ext^1_I(\chi,\pi_\sigma)) = 3$.  With these modifications, we may proceed as in the proofs of \cite[Prop. 10.5, Thm. 10.7]{paskunas:exts} to obtain
$$\textnormal{H}^1(I_1, \pi(r,0,1)) \cong \fm(r,0,1)^{\oplus 3}.$$

We now compute the higher cohomology.  By \cite[Thm. 5.13]{kohlhaase:duality}, we have 
$$S^j(\pi(r,0,1)) = \begin{cases} \pi(r, 0, \omega^{-r}) := \pi(r,0,1) \otimes_C \omega^{-r}\circ \det & \textnormal{if}~ j = 1, \\ 0 & \textnormal{if}~ j \neq 1,\end{cases}$$
where $\omega: \bbQ_p^\times \longrightarrow C^\times$ is the character $xp^a \longmapsto \overline{x}$ ($x\in \bbZ_p^\times, a\in \bbZ$).  On the side of Hecke modules, by unraveling the definitions in \cite[Thm. 4.8]{abe:involutions} we obtain
$$\fm(r,0,1)^\vee \cong \fm(r,0,\omega^{-r}) := \pi(r,0,\omega^{-r})^{I_1}.$$
Therefore, the spectral sequence \eqref{mainss} and the previous paragraphs give
\begin{align*}
\textnormal{H}^2(I_1, \pi(r,0,1)) & \cong  \textnormal{H}^1(I_1, \pi(r,0,\omega^{-r}))^\vee \cong \fm(r,0,1)^{\oplus 3}, \\
\textnormal{H}^3(I_1, \pi(r,0,1)) & \cong  \textnormal{H}^0(I_1, \pi(r,0,\omega^{-r}))^\vee \cong \fm(r,0,1).
\end{align*}

\vspace{5pt}

\subsection{$\textrm{SL}_2(\bbQ_p)$}

We now suppose $\bG = \bS\bL_{2/\bbQ_p}$ with $p \geq 5$.  As before, we let $\bB = \bT\ltimes \bU$ be the upper-triangular Borel subgroup, and $I_1$ the ``upper-triangular mod $p$'' pro-$p$ Iwahori subgroup.  Once again, the assumption $p \geq 5$ implies $I_1$ is torsion-free, and therefore has cohomological dimension $\dim_{\bbQ_p}(G) = 3$.  The calculations below are similar to those for $\textnormal{GL}_2(\bbQ_p)$, using the Poincar\'e duality spectral sequence \eqref{mainss} and Corollary \ref{ordss}; we simply record the nonzero cohomology spaces.

\vspace{5pt}

\subsubsection{Trivial representation}\label{sl2triv}

We first let $\pi = \mathbf{1}_G$ denote the trivial representation.  We have
\begin{eqnarray*}
\textnormal{H}^0(I_1, \mathbf{1}_G) & \cong & \chi_{\triv}, \\
\textnormal{H}^1(I_1, \mathbf{1}_G) & \cong & \Ind_{\cH_T}^{\cH}(\overline{\alpha}), \\
\textnormal{H}^2(I_1, \mathbf{1}_G) & \cong & \Ind_{\cH_T}^{\cH}(\overline{\alpha}), \\
\textnormal{H}^3(I_1, \mathbf{1}_G) & \cong & \chi_{\triv}.
\end{eqnarray*}
Here $\overline{\alpha}: T \longrightarrow C^\times$ denotes the character $\sm{xp^a}{0}{0}{x^{-1}p^{-a}} \longmapsto \overline{x}^2$ ($x\in \bbZ_p^\times, a\in \bbZ$).  This can quickly be obtained from the two spectral sequences, using \cite[Lem. 3.1.1]{hauseux:parabind}.  (See also \cite[Rmk. 6.6]{koziol:h1triv}.)

\vspace{5pt}

\subsubsection{Principal series}

We now let $\chi:T \longrightarrow C^\times$ denote a smooth character of the torus, and let $\Ind_B^G(\chi)$ denote the parabolically induced representation.  We have
\begin{eqnarray*}
\textnormal{H}^0\big(I_1, \Ind_B^G(\chi)\big) & \cong & \Ind_{\cH_T}^{\cH}(\chi), \\
\textnormal{H}^2\big(I_1, \Ind_B^G(\chi)\big)  & \cong & \Ind_{\cH_T}^{\cH}(\chi^{-1}\overline{\alpha})^\vee.
\end{eqnarray*}
Furthermore, the results of \cite{koziol:h1ps} can be adapted to show that we have an exact sequence
$$0 \longrightarrow  \Ind_{\cH_T}^{\cH}(\chi) \longrightarrow \textnormal{H}^1\big(I_1, \Ind_B^G(\chi)\big) \longrightarrow  \Ind_{\cH_T}^{\cH}(\chi^{-1}\overline{\alpha})^\vee \longrightarrow 0,$$
which is nonsplit if and only if $\chi$ is equal to the character $\sm{xp^a}{0}{0}{x^{-1}p^{-a}} \longmapsto \overline{x}$ ($x\in \bbZ_p^\times, a\in \bbZ$).

\vspace{5pt}

\subsubsection{Steinberg}

Suppose that $\pi = \textnormal{St}_G$ is the Steinberg representation of $G$.  Using the previous two sections, we obtain
\begin{eqnarray*}
\textnormal{H}^0(I_1, \textnormal{St}_G) & \cong & \chi_{\sign}, \\
\textnormal{H}^1(I_1, \textnormal{St}_G) & \cong & \Ind_{\cH_T}^{\cH}(\mathbf{1}_T), \\
\textnormal{H}^2(I_1, \textnormal{St}_G) & \cong & \chi_{\triv}.
\end{eqnarray*}

\vspace{5pt}

\subsubsection{Supersingular representations}

Finally, we discuss cohomology of supersingular representations.  Most of the techniques of \cite{paskunas:exts} work mutatis mutandis in the $\textnormal{SL}_2(\bbQ_p)$ setting, so we only outline the main ideas (see also \cite{nadimpalli}).

Recall from Subsection \ref{gl2ssing} the supersingular representations $\pi(r,0,1)$ of $\textnormal{GL}_2(\bbQ_p)$.  Define $v := \overline{[1,X^r]}, v' := \overline{[\sm{0}{1}{p}{0}, X^r]} \in \pi(r,0,1)$, and let $\pi_r$ denote the $G$-subrepresentation of $\pi(r,0,1)|_{G}$ generated by $v$.  (Here $X^r$ denotes a vector spanning the one-dimensional vector space $\textnormal{Sym}^r(C^{\oplus 2})^{I_1}$, and $[1,X^r] \in \cind_{\cZ\textnormal{GL}_2(\bbZ_p)}^{\textnormal{GL}_2(\bbQ_p)}(\textnormal{Sym}^r(C^{\oplus 2}))$ denotes the function with support $\cZ\textnormal{GL}_2(\bbZ_p)$ and value $X^r$ at $1$; see \cite[\S\S 2.3, 2.6]{breuil:gl2qp}.)  The representations $\pi_0, \pi_1, \ldots, \pi_{p - 1}$ are pairwise non-isomorphic, and constitute all absolutely irreducible supersingular $G$-representations over $C$ (see \cite[Thm. 4.12(1)]{abdellatif:sl2qp}).  Further, by \emph{op. cit.}, Proposition 4.7, the space $\pi_r^{I_1}$ is one-dimensional, and consequently simple as an $\cH$-module.  We denote this $\cH$-module by $\fm_r$, so that
$$\textnormal{H}^0(I_1,\pi_r) = \fm_r$$
by definition.  The forthcoming PhD thesis of Jake Postema shows that
$$S^j(\pi_r) = \begin{cases}\pi_{p - 1 - r} & \textnormal{if}~ j = 1,\\ 0 & \textnormal{if}~ j \neq 1.\end{cases}$$
Therefore, applying \eqref{mainss}, we obtain
$$\textnormal{H}^2(I_1,\pi_r) \cong \textnormal{H}^0(I_1, \pi_{p - 1 - r})^\vee \cong \fm_{p - 1 - r}^\vee = \begin{cases}\fm_{p - 1 - r} & \textnormal{if}~ r \in \{0, p - 1\},\\ \fm_r & \textnormal{if}~ 0 < r < p - 1,\end{cases}$$
where the last equality follows from \cite[Thm. 4.8]{abe:involutions} (or an easy calculation by hand).

We now determine $\textnormal{H}^1$.  Define the following subspaces of $\pi_r$:
\begin{eqnarray*}
M & := & \left\langle \begin{pmatrix}1 & \bbZ_p \\ 0 & 1\end{pmatrix}\begin{pmatrix} p^n & 0 \\ 0 & p^{-n}\end{pmatrix}.v\right\rangle_{n\geq 0}, \\
\Pi.M' & := & \left\langle \begin{pmatrix}0 & 1 \\ p & 0\end{pmatrix}\begin{pmatrix}1 & \bbZ_p \\ 0 & 1\end{pmatrix}\begin{pmatrix} p^n & 0 \\ 0 & p^{-n}\end{pmatrix}.v'\right\rangle_{n\geq 0}.
\end{eqnarray*}
Both $M$ and $\Pi.M'$ are in fact stable by $I$, and fit into an $I$-equivariant resolution
$$0 \longrightarrow \xi_r \longrightarrow M \oplus \Pi.M' \longrightarrow \pi_r \longrightarrow 0,$$
where $\xi_r$ is the character $\sm{x}{0}{0}{x^{-1}} \longmapsto x^r$ ($x \in \bbF_p^\times$), inflated to $I$.  We may now proceed as in \cite[Thm. 7.9]{paskunas:exts} to conclude that $\Ext^1_I(\xi_r,\pi_r)$ is two-dimensional, and the analogous Ext spaces for other characters of $I$ are all 0.  (Note that fixing the central character in the $\textnormal{GL}_2(\bbQ_p)$ case in \cite{paskunas:exts} will produce the same dimensions as the $\textnormal{SL}_2(\bbQ_p)$ case.)  We conclude that $\textnormal{H}^1(I_1, \pi_r)$ is two-dimensional, with the action of $I$ given by the character $\xi_r$.  When $0 < r < p - 1$, this is enough to conclude
$$\textnormal{H}^1(I_1, \pi_r) \cong \fm_r^{\oplus 2}$$
(special care must be taken with the case $r = (p - 1)/2$).  On the other hand, when $r \in \{0, p - 1\}$, we have
$$\textnormal{H}^1(I_1, \pi_r) \cong \fm_0 \oplus \fm_{p - 1}.$$
Note that these isomorphisms are consistent with the relation $\textnormal{H}^1(I_1,\pi_r) \cong \textnormal{H}^1(I_1, \pi_{p - 1 - r})^\vee$.

\vspace{5pt}

\subsection{Steinberg representation of $\textnormal{GL}_3(\bbQ_p)$}

Finally, let $\bG = \bG\bL_{3/\bbQ_p}$, let $\bB$ denote the upper-triangular Borel subgroup, and $I_1$ the ``upper-triangular mod $p$'' pro-$p$ Iwahori subgroup.  Suppose $p \geq 5$, so that $I_1$ is torsion-free of cohomological dimension 9.  Denote by $\bP_1$ and $\bP_2$ the two standard parabolic subgroups for which $\bB \subsetneq \bP_i \subsetneq \bG$.

Let $\textnormal{St}_G$ denote the Steinberg representation of $G = \textnormal{GL}_3(\bbQ_p)$.  By \cite[pf. of Prop. 5.6]{kohlhaase:duality}, we have $S^j(\textnormal{St}_G) = 0$ for $j \notin \{2,3\}$, $S^2(\textnormal{St}_G) = \mathbf{1}_G$, and $S^3(\textnormal{St}_G)$ fits into a short exact sequence
$$0 \longrightarrow \Ind_{P_1}^G(\chi_{P_1}) \oplus \Ind_{P_2}^G(\chi_{P_2}) \longrightarrow S^3(\textnormal{St}_G) \longrightarrow \Ind_{B}^G(\chi_{B}) \longrightarrow 0.$$
(We shall soon show that this short exact sequence is nonsplit.)

We will determine some of the $\cH$-modules $\textnormal{H}^{i}(I_1,\textnormal{St}_G)$.  Note first that $\textnormal{H}^{0}(I_1,\textnormal{St}_G) \cong \chi_{\sign}$ by \cite[Thm. 4.17]{ahv:pro-p}.  To access higher cohomology groups, we will use the spectral sequence \eqref{mainss}
$$E_2^{i,j} = \textnormal{H}^i\big(I_1,S^j(\textnormal{St}_G)\big) \Longrightarrow \textnormal{H}^{9 - i - j}(I_1,\textnormal{St}_G)^\vee.$$
By the above calculation of $S^j(\textnormal{St}_G)$, this spectral sequence degenerates at the $E_3$ page.  In particular, we obtain
\begin{eqnarray*}
\textnormal{H}^{9}(I_1,\textnormal{St}_G) & = & 0 \\
\textnormal{H}^{8}(I_1,\textnormal{St}_G) & = & 0 \\
\textnormal{H}^{7}(I_1,\textnormal{St}_G) & \cong & \textnormal{H}^{0}\big(I_1,S^2(\textnormal{St}_G)\big)^\vee \\
 & \cong & \textnormal{H}^{0}(I_1,\mathbf{1}_G)^\vee \\
 & \cong & \chi_{\triv}
\end{eqnarray*}
Further, one can show that $\textnormal{H}^{6}(I_1,\textnormal{St}_G)$ has a quotient isomorphic to $\chi_{\triv}\oplus \fm_{\alpha}^\vee$, where $\fm_{\alpha}$ is the 3-dimensional simple supersingular $\cH$-module constructed in \cite[Thm. 6.4(b)]{koziol:h1triv}.

We also obtain information about the pro-$p$-Iwahori cohomology of $S^3(\textnormal{St}_G)$.  Since $E_3^{i,j} = E_\infty^{i,j} = 0$ for $i + j > 9$, we obtain
\begin{eqnarray*}
\textnormal{H}^{9}\big(I_1,S^3(\textnormal{St}_G)\big) & = & 0 \\
\textnormal{H}^{8}\big(I_1,S^3(\textnormal{St}_G)\big) & = & 0 \\
\textnormal{H}^{7}\big(I_1,S^3(\textnormal{St}_G)\big) & \cong & \textnormal{H}^{9}\big(I_1,S^2(\textnormal{St}_G)\big) \\
 & \cong & \textnormal{H}^{9}(I_1,\mathbf{1}_G) \\
 & \cong & \chi_{\triv},
\end{eqnarray*}
where the last isomorphism follows from Subsection \ref{findim}.  As above, we may also deduce that $\textnormal{H}^6(I_1, S^3(\textnormal{St}_G))$ has a quotient isomorphic to $\chi_\triv \oplus \fm_\alpha^\vee$.

We now show how these results may be used to show that the extension defining $S^3(\textnormal{St}_G)$ is nonsplit.  Assume the contrary, so that
$$S^3(\textnormal{St}_G) \cong \bigoplus_{\bP \subsetneq \bG}\Ind_{P}^G(\chi_{P}).$$
By Subsection \ref{parabindexs}, we would then obtain
\begin{eqnarray*}
\textnormal{H}^{7}\big(I_1,S^3(\textnormal{St}_G)\big) & \cong & \bigoplus_{\bP \subsetneq \bG} \textnormal{H}^{7}\big(I_1, \Ind_P^G(\chi_P)\big) \\
 & \cong & \textnormal{H}^{7}\big(I_1, \Ind_{B}^G(\chi_{B})\big) \oplus \textnormal{H}^{7}\big(I_1, \Ind_{P_1}^G(\chi_{P_1})\big) \oplus \textnormal{H}^{7}\big(I_1, \Ind_{P_2}^G(\chi_{P_2})\big) \\
 & \cong & \textnormal{H}^{0}\big(I_1,\Ind_{P_1}^G(\mathbf{1}_{M_1})\big)^\vee \oplus \textnormal{H}^{0}\big(I_1,\Ind_{P_2}^G(\mathbf{1}_{M_2})\big)^\vee.
\end{eqnarray*}
This would then imply that $\textnormal{H}^{7}(I_1,S^3(\textnormal{St}_G))$ has dimension greater than 1, a contradiction.  Thus, the short exact sequence does not split, and we obtain 
$$\textnormal{H}^{0}\big(I_1,S^3(\textnormal{St}_G)\big) \cong \Ind_{\cH_{M_1}}^\cH(\chi_{P_1}) \oplus \Ind_{\cH_{M_2}}^\cH(\chi_{P_2}).$$

\vspace{10pt}

\appendix

\section{}\label{app}

In this appendix we slightly expand on results of Emerton and Pa\v{s}k\={u}nas.

We maintain the same notation as in the body of the article.  Namely, we let $\bG$ denote a connected reductive group over $F$, let $\boldsymbol{\cZ}$ denote the connected center of $\bG$, and let $\widetilde{\cZ}_0$ denote the maximal compact subgroup of $\cZ = \boldsymbol{\cZ}(F)$.

\vspace{5pt}

\subsection{On Conjecture \ref{conjA}}

Let $G_\aff$ denote the (open) subgroup of $G$ generated by all parahoric subgroups (equivalently, $G_\aff$ is the kernel of the Kottwitz homomorphism).  We also let $I \subset G_\aff$ denote a choice of Iwahori subgroup, that is, the stabilizer in $G_\aff$ of a chamber of the semisimple Bruhat--Tits building of $G$.

Recall that the group $\cZ/\widetilde{\cZ}_0$ is free of finite rank $s$, and we have chosen central elements $z_1, \ldots, z_s \in G$ freely generating $\cZ/\widetilde{\cZ}_0$.  We define 
$$H := \langle z_1, \ldots, z_s \rangle G_\aff,$$
which is an open, normal, finite-index subgroup of $G$.  Given $c_1, c_2, \ldots, c_s \in C^\times$, recall that $\mathfrak{Rep}^{\textnormal{adm}}_{z_i = c_i}(G)$ denotes the abelian category of admissible $G$-representations on which the $z_i$ act by $c_i$.

\begin{conj}\label{emerton-conj}
Let $\pi\in \mathfrak{Rep}^{\textnormal{adm}}_{z_i = c_i}(G)$.  Then there exists $A \in \mathfrak{Rep}^{\textnormal{adm}}_{z_i = c_i}(G)$ and a $G$-equivariant injection $\pi \longhookrightarrow A$, such that $A|_I$ is injective in $\mathfrak{Rep}^\infty(I)$.  
\end{conj}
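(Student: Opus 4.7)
The plan is to construct $A$ by passing to a finite-index open subgroup in which the center has more influence, taking an injective envelope there in the locally-admissible category with fixed central character, and inducing back up to $G$; the main difficulty will be ensuring admissibility is preserved.

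First, I would reduce to the open, normal, finite-index subgroup $H := \langle z_1, \ldots, z_s\rangle G_{\aff}$ (which contains $I$). It suffices to produce $A_H \in \mathfrak{Rep}^{\textnormal{adm}}_{z_i = c_i}(H)$ with $\pi|_H \hookrightarrow A_H$ and with $A_H|_I$ injective in $\mathfrak{Rep}^\infty(I)$: then $A := \cind_H^G(A_H) = \Ind_H^G(A_H)$ will lie in $\mathfrak{Rep}^{\textnormal{adm}}_{z_i = c_i}(G)$, will contain $\pi$ by Frobenius reciprocity, and by Mackey will decompose upon restriction to $I$ as a finite direct sum indexed by $I \backslash G / H$, whose summands inherit injectivity in $\mathfrak{Rep}^\infty(I)$ from that of $A_H|_I$.

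Next, I would construct $A_H$ as an injective envelope of $\pi|_H$ in the larger category $\mathfrak{Rep}^{\textnormal{ladm}}_{z_i = c_i}(H)$. Via Pontryagin duality, this category is anti-equivalent to pseudocompact modules over a quotient of the completed group algebra $\Lambda(H)$ by the relations $z_i - c_i$, and the latter category has enough projectives; dualizing yields enough injectives in the original category, so the envelope exists. Injectivity of $A_H|_I$ in $\mathfrak{Rep}^\infty(I)$ should then follow from \cite[Prop. 2.1.2]{emerton:ordII} applied to the restriction functor from $\mathfrak{Rep}^{\textnormal{ladm}}(H)$ to $\mathfrak{Rep}^\infty(I)$, which preserves injectives (its left adjoint, compact induction, is exact), together with the observation that the central-character cut is exact and compatible with restriction to $I$.

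The main obstacle will be verifying that the envelope $A_H$ is in fact admissible, not merely locally admissible. When the semisimple $F$-rank of $\bG$ is $0$, the group $G_{\aff}$ is compact, so $H$ is virtually $\langle z_1, \ldots, z_s\rangle$ modulo a compact group; local admissibility with the $z_i$ acting by scalars then forces admissibility, i.e. $\mathfrak{Rep}^{\textnormal{ladm}}_{z_i = c_i}(H) = \mathfrak{Rep}^{\textnormal{adm}}_{z_i = c_i}(H)$ automatically. When the rank is $1$, I would invoke Pa\v{s}k\={u}nas' explicit analysis of blocks and injective envelopes in the $\bS\bL_2$-type setting, which together with Emerton's finiteness results imply the envelope remains admissible block by block. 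In higher rank the corresponding admissibility question is a deep open problem tied to the mod-$p$ Langlands program, which is why the conjecture itself remains open there.
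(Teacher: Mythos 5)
Your reduction to the subgroup $H = \langle z_1,\ldots,z_s\rangle G_{\aff}$ matches the paper's Steps 2 and 3, and the Mackey argument for pushing a solution up to $G$ is correct. But the core of your construction — taking an injective envelope $A_H$ of $\pi|_H$ in $\mathfrak{Rep}^{\textnormal{ladm}}_{z_i=c_i}(H)$ and claiming its restriction to $I$ is injective — does not work as stated, for two separate reasons.

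First, the claim that the restriction functor $\mathfrak{Rep}^{\textnormal{ladm}}(H) \to \mathfrak{Rep}^\infty(I)$ preserves injectives is \emph{not} a formal consequence of adjunction: the would-be left adjoint $\cind_I^H$ does not land in $\mathfrak{Rep}^{\textnormal{ladm}}(H)$ (already for $\pi$ a non-admissible smooth $I$-representation, $\cind_I^H(\pi)$ is not locally admissible), so \cite[Prop.~2.1.2]{emerton:ordII} does not apply in the way you want. Preservation of injectivity under restriction from the \emph{locally admissible} category to $\mathfrak{Rep}^\infty(I)$ is exactly Conjecture~\ref{conjC}, which the paper treats as a separate (and generally open) problem and proves only in semisimple $F$-rank~$0$. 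Invoking it here is circular.

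Second, your admissibility argument in rank~$0$ is false: $\mathfrak{Rep}^{\textnormal{ladm}}_{z_i=c_i}(H) \neq \mathfrak{Rep}^{\textnormal{adm}}_{z_i=c_i}(H)$ even when $G_{\aff}$ is compact. An infinite direct sum $\bigoplus_n \sigma_n$ of pairwise non-isomorphic irreducible smooth representations of the compact group $G_{\aff}$, with the $z_i$ acting by the fixed scalars, is locally admissible but not admissible. What is true — and what the paper actually uses, via \cite[Lems.~6.2.4, 6.3.2]{paskunas:diags} — is the finer statement that the injective envelope \emph{of an admissible representation} of a compact $p$-adic analytic group remains admissible (via noetherianity of the Iwasawa algebra and finite generation of projective covers of finite modules on the pseudocompact side). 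Your categorical identity is not available.

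The rank~$1$ case is simply hand-waved in your proposal, and here the paper's strategy diverges from yours entirely: it does not take an injective envelope in a locally admissible category at all. Instead it exploits the amalgam decomposition $G_{\aff} = K *_I K'$ and the formalism of coefficient systems/diagrams on the Bruhat--Tits tree. The construction produces an explicit diagram $D$ whose components are finite sums $\inj_K(C[K/K_1])^{\oplus a}$, carefully arranged so that the connecting maps between the $I$-restrictions become isomorphisms; one then takes $A = \textnormal{H}_0(\sB, D)$, and \cite[Prop.~5.3.5]{paskunas:diags} gives $A|_I \cong \inj_I(C[I/I_1])^{\oplus a_I}$ directly, with admissibility read off from the $I_1$-invariants. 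This bypasses both of the issues above: injectivity of $A|_I$ is built in by construction, not deduced from a preservation statement, and admissibility is checked explicitly rather than inherited from an envelope. To repair your proposal you would need to replace the envelope-plus-restriction argument by something of this explicit nature, or else independently establish Conjecture~\ref{conjC} for $H$ \emph{and} the admissibility of the envelope — neither of which is currently within reach beyond rank~$0$.
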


\begin{thm}\label{emerton-conjthm}
Suppose the semisimple $F$-rank of $\bG$ is $0$ or $1$.  Then Conjecture \ref{emerton-conj} is true.
\end{thm}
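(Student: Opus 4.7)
The plan is to split into cases according to the semisimple $F$-rank of $\bG$, since the obstacles in the two cases are quite different.

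When the semisimple $F$-rank is zero, $\bG=\bZ$ is $F$-anisotropic, so $G$ is compact modulo its center $\cZ$. Any admissible representation on which the $z_i$ act by prescribed scalars $c_i$ is therefore finite-dimensional over $C$, being a smooth representation of the compact group $G/\langle z_1,\ldots,z_s\rangle\widetilde{\cZ}_0$. I would construct $A$ by an explicit finite-index induction: set $H:=\langle z_1,\ldots,z_s\rangle G_{\textnormal{aff}}$, which is open of finite index in $G$ and contains $I$. An injective cogenerator of the category of smooth $H$-representations on which the $z_i$ act by $c_i$ is given by (a suitable twist of) $\cC^\infty(H,C)$, and $\pi|_H$ embeds into a finite direct sum $A_0$ of copies of it. Setting $A:=\Ind_H^G(A_0)$, admissibility of $A$ follows from finiteness of $[G:H]$, the embedding $\pi\hookrightarrow A$ is Frobenius reciprocity, and $A|_I$ is a finite direct sum of copies of $\cC^\infty(I,C)$, hence injective in $\mathfrak{Rep}^\infty(I)$.

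When the semisimple $F$-rank is one, I would invoke a Pa\v{s}k\={u}nas-type construction of injective envelopes in $\mathfrak{Rep}^{\textnormal{ladm}}_{z_i=c_i}(G)$. Via Pontryagin duality, $\pi^\vee$ is a finitely generated pseudocompact $\Lambda(G)$-module on which $\widetilde{\cZ}_0$ acts by the central character and each $z_i$ acts by $c_i^{-1}$. The goal is to build a projective cover $P\twoheadrightarrow\pi^\vee$ in the corresponding category of pseudocompact modules, with $P$ moreover finitely generated over $\Lambda(I)$. Setting $A:=P^\vee$ then yields an admissible representation containing $\pi$ and lying in $\mathfrak{Rep}^{\textnormal{adm}}_{z_i=c_i}(G)$. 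The injectivity of $A|_I$ is equivalent to freeness of $P$ over the local ring $\Lambda(I)$, which I would extract from an explicit description of such projectives as direct summands of pseudocompact modules of the form $\cind_K^G(\sigma)^\vee$ for $K$ a maximal compact subgroup and $\sigma$ an injective smooth $K$-representation; restriction to $\Lambda(I)$ decomposes such a module according to the Iwahori decomposition $K=\bigsqcup_w IwI\cap K$, yielding freeness.

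The main obstacle will be the rank-one case, specifically the construction of $P$ and the verification of its freeness over $\Lambda(I)$. For $G=\textnormal{GL}_2(\bbQ_p)$ this is essentially implicit in Pa\v{s}k\={u}nas's work on the Montr\'eal functor, where projective envelopes of irreducible admissible representations with a central character are computed explicitly and decompose into pieces manifestly free over the Iwahori Iwasawa algebra. For general rank-one groups the strategy is to exploit the fact that $G$ is generated modulo its center by finitely many compact open subgroups corresponding to vertices of the Bruhat--Tits tree, and to run Pa\v{s}k\={u}nas's argument relative to these; making this systematic — in particular verifying that the resulting projective is genuinely finitely generated over $\Lambda(I)$ and not merely over $\Lambda(K)$ for $K$ a larger compact subgroup — is the crux of the argument.
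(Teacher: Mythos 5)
Your rank-zero argument contains a false premise: it is not true that an admissible smooth representation of a group compact modulo center (with the $z_i$ acting by scalars) is finite-dimensional. For a compact group $K$, the representation $\cC^\infty(K,C)$ is smooth, admissible, and infinite-dimensional. What \emph{is} finite-dimensional is the $K$-socle (each irreducible smooth representation of a compact $p$-adic group has nonzero $I_1$-invariants, and $\pi^{I_1}$ is finite-dimensional), and this is what one actually needs to embed $\pi$ into a finite direct sum of copies of $\cC^\infty(H,C)$. Once this is fixed, your rank-zero construction --- embed $\pi|_H$ into a finite sum of $\cC^\infty(H,C)$ and then apply $\Ind_H^G$ --- is close in spirit to the paper's argument, which instead invokes Pa\v{s}k\={u}nas's result that injective envelopes of irreducible smooth representations of a compact open $p$-adic group are admissible, and then runs the reduction $G_\aff \rightsquigarrow H \rightsquigarrow G$ in two further steps.

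For the rank-one case you propose a genuinely different route from the paper: Pontryagin duality, pseudocompact $\Lambda(G)$-modules, and projective covers finitely generated and free over $\Lambda(I)$, in the style of Pa\v{s}k\={u}nas's $\mathrm{GL}_2(\bbQ_p)$ work. This is a reasonable plan but it is not carried out; you explicitly identify the construction of $P$ and its freeness over $\Lambda(I)$ as ``the crux of the argument'' and leave it open, so as a proof this is a genuine gap. The paper instead works on the tree of $G_\aff$: since $G_\aff = K *_I K'$ (amalgam over the Iwahori subgroup of the two maximal parahorics), a representation $\pi$ is encoded as a diagram $D_\pi$; one fixes injective envelopes $\inj_I(\pi|_I)$, $\inj_K(\pi|_K)$, $\inj_{K'}(\pi|_{K'})$, embeds them compatibly into a ``standard'' diagram $D$ built out of $\inj_I(C[I/I_1])^{\oplus a_I}$ and its $K$-, $K'$-analogues, and takes $A := \textnormal{H}_0(\sB,D)$; the key computational input is \cite[Prop.\ 5.3.5]{paskunas:diags}, which identifies $\textnormal{H}_0(\sB,D)|_I \cong \inj_I(C[I/I_1])^{\oplus a_I}$ and simultaneously yields injectivity of $A|_I$ and admissibility of $A$. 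This bypasses the dual-side freeness question entirely and works uniformly over the two vertex stabilizers. You also do not address the reduction from $G_\aff$ to $G$ in the rank-one case (the paper's Steps 2 and 3), although that part of your outline in the rank-zero case would adapt.
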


We prove the above theorem in a series of steps.  If $G'$ is an open subgroup of $G$ containing $I$, we will say ``Conjecture \ref{emerton-conj} is true for $G'$'' if the statement remains valid when $G$ is replaced by $G'$.  (When $G'$ does not contain the elements $z_i$, we omit the condition ``$z_i$ acts by $c_i$.'')

\begin{step}\label{step1}
Suppose the semisimple $F$-rank of $\bG$ is $0$ or $1$.  Then Conjecture \ref{emerton-conj} is true for $G_\aff$.  
\end{step}

\begin{proof}
When the semisimple $F$-rank of $\bG$ is 0, the semisimple Bruhat--Tits building of $G$ is a point.  Consequently, $G_\aff = I$ is a profinite group, and we therefore we have the existence of an injective envelope $\pi \longhookrightarrow \inj_{G_\aff}(\pi)$ in the category $\mathfrak{Rep}^\infty(G_\aff)$.  It suffices to show that $\inj_{G_\aff}(\pi)$ is admissible.  Note that $\inj_{G_\aff}(\pi) \cong \inj_{G_\aff}(\soc_{G_\aff}(\pi))$, and the $G_\aff$-representation $\soc_{G_\aff}(\pi)$ is semisimple and finite-dimensional (by the admissibility of $\pi$).  We are now reduced to showing that $\inj_{G_\aff}(\sigma)$ is admissible, where $\sigma$ denotes an irreducible smooth $G_\aff$-representation.  This follows from \cite[Lems. 6.2.4 and 6.3.2]{paskunas:diags} (the proofs do not require that the coefficient field be algebraically closed).

Suppose now that the semisimple $F$-rank of $\bG$ is 1.  In this case, the semisimple Bruhat--Tits building $\sB$ of $G$ is a tree, and we let $\fe$ denote the edge whose stabilizer is $I$, and let $\fv$ and $\fv'$ denote the two vertices in the closure of $\fe$.  The group $G_\aff$ acts on $\sB$, with one orbit in the set of edges, and two orbits in the set of vertices.  If we let $K$ and $K'$ denote the parahoric subgroups associated to $\fv$ and $\fv'$, then \cite[\S 4, Thm. 6]{serre:trees} implies
$$G_\aff = K *_I K'.$$

Since $G_\aff$ is an amalgamated product of two parahoric subgroups, we may use the formalism of diagrams utilized in \cite{koziolxu}.  In particular, the $G_\aff$-representation $\pi$ gives a diagram
\begin{center}
$D_\pi \quad := \quad \left(
\begin{tikzcd}
 & \pi|_K \\
 \pi|_I \ar[ur, "\textnormal{id}", "\mathclap{\sim}"' sloped] \ar[dr, "\textnormal{id}"', "\mathclap{\sim}" sloped]& \\
 & \pi|_{K'}
\end{tikzcd}
\right),$
\end{center}
with the property that $\textnormal{H}_0(\sB, D_\pi) \cong \pi$ as $G_\aff$-representations.  Fixing injective envelopes of each representation, we obtain
\begin{center}
\begin{tikzcd}
 & \pi|_K \ar[rr, hook] & & \textnormal{inj}_K(\pi|_K) \\
 \pi|_I  \ar[ur, "\textnormal{id}", "\mathclap{\sim}"' sloped] \ar[dr, "\textnormal{id}"', "\mathclap{\sim}" sloped]\ar[rr, hook] & & \textnormal{inj}_I(\pi|_I) &  \\
 & \pi|_{K'} \ar[rr, hook] & & \textnormal{inj}_{K'}(\pi|_{K'})
\end{tikzcd}
\end{center}
Note that $\inj_I(\pi|_I)$ is an admissible $I$-representation (and similarly for the groups $K$ and $K'$), as in the first paragraph.  By \cite[Lem. 6.2.3]{paskunas:diags}, the $I$-equivariant injection $\pi|_I \longhookrightarrow (\pi|_K)|_I \longhookrightarrow \textnormal{inj}_K(\pi|_K)|_I$ extends to an injection $\alpha: \textnormal{inj}_I(\pi|_I) \longhookrightarrow \textnormal{inj}_K(\pi|_K)|_I$ (and similarly for $K'$).  Therefore, we get a morphism of diagrams
\begin{center}
\begin{tikzcd}
 & \pi|_K \ar[rr, hook] & & \textnormal{inj}_K(\pi|_K) \\
 \pi|_I  \ar[ur, "\textnormal{id}", "\mathclap{\sim}"' sloped] \ar[dr, "\textnormal{id}"', "\mathclap{\sim}" sloped] \ar[rr, hook] & & \textnormal{inj}_I(\pi|_I) \ar[ur, hook, "\alpha"] \ar[dr, hook, "\alpha'"'] &  \\
 & \pi|_{K'} \ar[rr, hook] & & \textnormal{inj}_{K'}(\pi|_{K'})
\end{tikzcd}
\end{center}
(that is, a diagram in which each square is commutative).

For $J\in \{K, K', I\}$, we have $\textnormal{inj}_J(\pi|_J) \cong \textnormal{inj}_J(\textnormal{soc}_J(\pi|_J))$.  Since $\pi$ is admissible, there exists an integer $a_J$ such that $\textnormal{soc}_J(\pi|_J) \longhookrightarrow C[J/J_1]^{\oplus a_J}$, where $J_1$ denotes the pro-$p$ radical of $J$, and therefore $\inj_J(\pi|_J) \cong \inj_J(\soc_J(\pi|_J)) \longhookrightarrow \inj_J(C[J/J_1])^{\oplus a_J}$.  Note that we have $\inj_K(C[K/K_1])|_I \cong \inj_I(C[I/I_1])^{\oplus [K:I]}$ (and similarly for $K'$).  Indeed, by Pontryagin duality, it suffices to show $C\llbracket K\rrbracket \cong C\llbracket I\rrbracket^{\oplus [K:I]}$, which follows from \cite[Cor. 19.4 iv.]{schneider:padicliegroups}.

We may therefore choose the integers $a_K, a_{K'}$, and $a_I$ such that we have injections
\begin{center}
\begin{tikzcd}
& \textnormal{inj}_K(\pi|_K) \ar[rr, hook, "j_K"] & & \inj_K(C[K/K_1])^{\oplus a_K}\\
 \textnormal{inj}_I(\pi|_I) \ar[rr, hook, "j_I"]\ar[ur, hook, "\alpha"] \ar[dr, hook, "\alpha'"'] & & \inj_I(C[I/I_1])^{\oplus a_I} &  \\
 & \textnormal{inj}_{K'}(\pi|_{K'}) \ar[rr, hook, "j_{K'}"] & & \inj_{K'}(C[K'/K'_1])^{\oplus a_{K'}}
\end{tikzcd}
\end{center}
and such that we have isomorphisms of $I$-representations 
$$\inj_K(C[K/K_1])^{\oplus a_K}|_I \cong \inj_I(C[I/I_1])^{\oplus a_I} \cong \inj_{K'}(C[K'/K'_1])^{\oplus a_{K'}}|_I.$$
Since $\textnormal{inj}_I(\pi|_I)$ is an injective $I$-representation, we have splittings
\begin{eqnarray*}
\inj_I(C[I/I_1])^{\oplus a_I} & = & j_I\big(\textnormal{inj}_I(\pi|_I)\big) \oplus A_I\\
\inj_K(C[K/K_1])^{\oplus a_K}|_I & = & j_K \circ \alpha\big(\inj_I(\pi|_I)\big) \oplus A_K\\
\inj_{K'}(C[K'/K'_1])^{\oplus a_{K'}}|_I & = & j_{K'} \circ \alpha'\big(\inj_I(\pi|_I)\big) \oplus A_{K'}.
\end{eqnarray*}
By construction, there exist $I$-equivariant isomorphisms $\beta: A_I \stackrel{\sim}{\longrightarrow} A_K$ and $\beta': A_I \stackrel{\sim}{\longrightarrow} A_{K'}$.  Therefore, we may construct $I$-equivariant isomorphisms
\begin{align*}
\gamma := (j_K \circ \alpha \circ j_I^{-1})\oplus \beta : \quad & \inj_I(C[I/I_1])^{\oplus a_I}  \stackrel{\sim}{\longrightarrow}  \inj_K(C[K/K_1])^{\oplus a_K}|_I \\
\gamma' := (j_{K'} \circ \alpha' \circ j_I^{-1})\oplus \beta' : \quad &  \inj_I(C[I/I_1])^{\oplus a_I}  \stackrel{\sim}{\longrightarrow}  \inj_{K'}(C[K'/K'_1])^{\oplus a_{K'}}|_I.
\end{align*}
This implies that we have a morphism of diagrams
\begin{equation}\label{morph-diag}
\begin{tikzcd}
& \textnormal{inj}_K(\pi|_K) \ar[rr, hook, "j_K"] & & \inj_K(C[K/K_1])^{\oplus a_K}\\
 \textnormal{inj}_I(\pi|_I) \ar[rr, hook, "j_I"]\ar[ur, hook, "\alpha"] \ar[dr, hook, "\alpha'"'] & & \inj_I(C[I/I_1])^{\oplus a_I} \ar[ur, "\gamma", "\mathclap{\sim}"' sloped] \ar[dr, "\gamma'"', "\mathclap{\sim}" sloped]&  \\
 & \textnormal{inj}_{K'}(\pi|_{K'}) \ar[rr, hook, "j_{K'}"] & & \inj_{K'}(C[K'/K'_1])^{\oplus a_{K'}}
\end{tikzcd}
\end{equation}

Let us denote the diagram on the right of \eqref{morph-diag} by $D$.  We have thus constructed a morphism of diagrams $D_\pi \longrightarrow D$ in which all arrows are injections, and where all arrows of $D_\pi$ and $D$ are isomorphisms.  We therefore obtain an injection of $G_\aff$-representations
$$\pi \cong \textnormal{H}_0(\sB, D_\pi) \longhookrightarrow \textnormal{H}_0(\sB, D).$$
By \cite[Prop. 5.3.5]{paskunas:diags}, we have
$$\textnormal{H}_0(\sB, D)|_I \cong \inj_I(C[I/I_1])^{\oplus a_I},$$
which is injective as a smooth $I$-representation.  Furthermore, since $C[I/I_1]$ is injective as a representation of $I/I_1$, we have
$$\textnormal{H}_0(\sB, D)^{I_1} \cong \big(\inj_I(C[I/I_1])^{\oplus a_I}\big)^{I_1} \stackrel{\textnormal{\cite[Lem. 6.2.4]{paskunas:diags}}}{\cong} \inj_{I/I_1}(C[I/I_1])^{\oplus a_I} \cong C[I/I_1]^{\oplus a_I},$$
and therefore $\textnormal{H}_0(\sB, D)$ is admissible.  
\end{proof}

\begin{step}
If Conjecture \ref{emerton-conj} is true for $G_\aff$, then it is true for $H$.  
\end{step}

\begin{proof}
Let $\pi \in \mathfrak{Rep}^{\textnormal{adm}}_{z_i = c_i}(H)$.  Assuming Conjecture \ref{emerton-conj} is true for $G_\aff$, we can find an admissible $G_\aff$-representation $A\in \mathfrak{Rep}^{\textnormal{adm}}(G_\aff)$ and a $G_\aff$-equivariant injection
$$\pi|_{G_\aff} \longhookrightarrow A,$$
such that $A|_I$ is injective in $\mathfrak{Rep}^\infty(I)$.  If we define an $H$-representation $\widetilde{A}$ by the conditions that $\widetilde{A}|_{G_\aff} = A$ and that the $z_i$ act by $c_i$, then the above injection extends to an $H$-equivariant injection
$$\pi \longhookrightarrow \widetilde{A}.$$
One easily checks that $\widetilde{A}$ satisfies the conditions of Conjecture \ref{emerton-conj} for $H$.
\end{proof}

\begin{step}
If Conjecture \ref{emerton-conj} is true for $H$, then it is true for $G$.  
\end{step}

\begin{proof}
Suppose $\pi \in \mathfrak{Rep}^{\textnormal{adm}}_{z_i = c_i}(G)$.  Assuming Conjecture \ref{emerton-conj} is true for $H$, we can find an admissible $H$-representation $A \in \mathfrak{Rep}^{\textnormal{adm}}_{z_i = c_i}(H)$ and an $H$-equivariant injection
$$\pi|_H \longhookrightarrow A,$$
such that $A|_I$ is injective in $\mathfrak{Rep}^\infty(I)$.  Taking inductions, we obtain $G$-equivariant injections
$$\pi \longhookrightarrow \Ind_H^G(\pi|_H) \longhookrightarrow \Ind_H^G(A).$$
Since $H$ is of finite index in $G$, the representation $\Ind_H^G(A) \cong \cind_H^G(A)$ is admissible (\cite[Lem. 2.2]{emertonpaskunas}), and by normality of $H$ we have 
$$\Ind_H^G(A)|_I \cong  \prod_{g\in G/H} \Ind^I_{I \cap gHg^{-1}}(A^g|_{I \cap gHg^{-1}}) \cong \prod_{g\in G/H} A^g|_I.$$  
We conclude that $\Ind_H^G(A)|_I$ is an injective $I$-representation.  
\end{proof}

\vspace{5pt}

\subsection{On Conjecture \ref{conjB}}

Our next task will be to investigate the derived functors of ordinary parts $\textnormal{R}^i\ord^G_{P^-}$.  We let $\bG^{\textnormal{der}}$ denote the derived subgroup of $\bG$, and $\bG^{\textnormal{sc}}$ the simply connected cover of $\bG^{\textnormal{der}}$.  Recall that $\boldsymbol{\cZ}$ is the connected center of $\bG$ and $\widetilde{\cZ}_0$ is the maximal compact subgroup of $\cZ$.

\begin{conj}[cf. \cite{emerton:ordII}, Conjecture 3.7.2] 
\label{sl2eff}
Let $\bP = \bM \ltimes \bN$ denote a standard parabolic subgroup of $\bG$, and let $\pi \in \mathfrak{Rep}^{\textnormal{ladm}}(G)$ be an injective object.  Then $\pi|_{N_0^-}$ is injective in $\mathfrak{Rep}^{\infty}(N_0^-)$.  Consequently the functors $\textnormal{H}^i\ord^G_{P^-}$ are effaceable on the category $\mathfrak{Rep}^{\textnormal{ladm}}(G)$ for $i > 0$, and therefore $\textnormal{H}^i\ord^G_{P^-} \simeq \textnormal{R}^i\ord^G_{P^-}$ for $i > 0$.  
\end{conj}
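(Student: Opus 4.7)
The plan is to treat the two assertions in order: first establish that $\pi|_{N_0^-}$ is injective in $\mathfrak{Rep}^\infty(N_0^-)$ for every injective $\pi \in \mathfrak{Rep}^{\textnormal{ladm}}(G)$, and then deduce effaceability of $\textnormal{H}^i\ord^G_{P^-}$ and the equivalence $\textnormal{H}^i\ord^G_{P^-} \simeq \textnormal{R}^i\ord^G_{P^-}$ formally. The formal part is immediate from the explicit construction recalled in Subsubsection \ref{smooth reps}, which gives
$$\textnormal{H}^i\ord^G_{P^-}(\pi) \cong C[z^{\pm 1}] \otimes_{C[z]} \textnormal{H}^i(N_0^-,\pi),$$
the right-hand side vanishing for $i>0$ as soon as $\pi|_{N_0^-}$ is injective. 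Since $\mathfrak{Rep}^{\textnormal{ladm}}(G)$ has enough injectives, such vanishing on a cogenerating class gives effaceability of $\textnormal{H}^i\ord^G_{P^-}$ in positive degrees, and the universality of effaceable $\delta$-functors then identifies $\textnormal{H}^\bullet\ord^G_{P^-}$ with $\textnormal{R}^\bullet\ord^G_{P^-}$.

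The substantive task is showing $\pi|_{N_0^-}$ is injective. The approach is to first decompose along the characters of $\langle z_1,\ldots,z_s\rangle$ acting on $\pi$, reducing to $\pi \in \mathfrak{Rep}^{\textnormal{ladm}}_{z_i=c_i}(G)$, and then to write $\pi = \varinjlim_n \tau_n$ as a filtered colimit of admissible subrepresentations (possible by local admissibility). For each $\tau_n$, Conjecture \ref{conjA} provides an admissible embedding $\tau_n \longhookrightarrow A_n$ with $A_n|_I$ injective in $\mathfrak{Rep}^\infty(I)$; the Iwahori decomposition $I = N_0^-(I\cap M)N_0$ combined with \cite[Prop. 2.1.11]{emerton:ordII} upgrades $I$-injectivity to $N_0^-$-injectivity of each $A_n$. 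Since $\Lambda(N_0^-)$ is Noetherian by Lemma \ref{props}(a), filtered colimits of injective smooth $N_0^-$-representations remain injective, so it would suffice to exhibit $\pi|_{N_0^-}$ as a filtered colimit, or a direct summand of a product, of the $A_n|_{N_0^-}$.

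The main obstacle is precisely this compatibility step. Since $\pi$ is injective in $\mathfrak{Rep}^{\textnormal{ladm}}(G)$, the diagonal map $\pi \longhookrightarrow \prod_n A_n$ would split if the product were taken in $\mathfrak{Rep}^{\textnormal{ladm}}(G)$, but products in the locally admissible category are in general strictly smaller than the ambient smooth products, and the locally admissible part of $\prod_n A_n$ need not have controlled $N_0^-$-restriction. Circumventing this apparently requires either a blocks-style structure theory for $\mathfrak{Rep}^{\textnormal{ladm}}(G)$ (\`a la Pa\v{s}k\={u}nas) or a functorial strengthening of Conjecture \ref{conjA} producing admissible hulls with compatible $I$-injective restrictions; both inputs are available in the low-rank situations treated in the appendix, where an explicit understanding of injective envelopes makes the argument go through, but neither is currently within reach for general $\bG$, which is why Conjecture \ref{sl2eff} remains open beyond those cases.
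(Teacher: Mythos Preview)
Your treatment of the formal consequences is correct: once the main assertion holds, vanishing of $\textnormal{H}^i\ord^G_{P^-}$ on injectives follows from the description $\textnormal{H}^i\ord^G_{P^-}(\pi) \cong C[z^{\pm 1}] \otimes_{C[z]} \textnormal{H}^i(N_0^-,\pi)$, and effaceability plus universality of derived functors give $\textnormal{H}^i\ord^G_{P^-} \simeq \textnormal{R}^i\ord^G_{P^-}$. The paper takes these implications as read, and this is exactly how they should be justified.

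Bear in mind, however, that the statement is a \emph{conjecture}: the paper proves the main assertion only under the hypothesis $\bG^{\textnormal{sc}} \cong \bS\bL_{2/F}$ (Theorem \ref{sl2effthm}). You correctly acknowledge this at the end, but your sketched route to the known cases---via Conjecture \ref{conjA} and filtered colimits of admissible hulls---is not the paper's route. The proof of Theorem \ref{sl2effthm} never invokes Conjecture \ref{conjA}. It begins instead with the Emerton--Pa\v{s}k\={u}nas result that locally admissible injectives for $\textnormal{SL}_2(F)$ restrict to injectives on $\textnormal{SL}_2(\cO_F)$, and then transfers injectivity along the chain $\bS\bL_2 \to \bG^{\textnormal{der}} \to \widetilde{\cZ}_0 \bG^{\textnormal{der}} \to \cZ \bG^{\textnormal{der}} \to \bG$, each step being a finite-index restriction, a quotient by a finite central subgroup, or a decomposition along the spectrum of $\langle z_1,\ldots,z_s\rangle$ (into generalized eigenspaces $\pi[f^i]$, not merely eigenspaces---your phrase ``decompose along the characters'' elides this). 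Every step manipulates the single injective $\pi$ directly, which is precisely what lets the paper avoid the compatibility obstacle you identify. Your approach would indeed require a functorial strengthening of Conjecture \ref{conjA} or a block decomposition; the paper's approach trades this for the concrete input that the $\bS\bL_2$ case is already settled by Emerton--Pa\v{s}k\={u}nas.
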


\begin{thm}\label{sl2effthm}
Suppose $\bG^{\textnormal{sc}} \cong \bS\bL_{2/F}$.  Then Conjecture \ref{sl2eff} is true.
\end{thm}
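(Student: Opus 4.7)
The plan is to show that any injective object $\pi$ in $\mathfrak{Rep}^{\textnormal{ladm}}(G)$ has restriction $\pi|_{N_0^-}$ injective in $\mathfrak{Rep}^\infty(N_0^-)$; the effaceability of the functors $\textnormal{H}^i\ord_{P^-}^G$ for $i>0$ then follows from \cite[Lem. 3.2.1]{emerton:ordII}, and the identification $\textnormal{H}^i\ord_{P^-}^G \simeq \textnormal{R}^i\ord_{P^-}^G$ is then standard. The hypothesis $\bG^{\textnormal{sc}} \cong \bS\bL_{2/F}$ forces the semisimple $F$-rank of $\bG$ to equal $1$; for the trivial parabolic $\bP = \bG$ one has $N^- = 1$ and there is nothing to prove, so we may take $\bP = \bB = \bZ \ltimes \bU$. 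By the block decomposition of $\mathfrak{Rep}^{\textnormal{ladm}}(G)$, we may further assume $\pi$ lies in a single block $\mathfrak{Rep}^{\textnormal{ladm}}_{z_i = c_i}(G)$.

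The key observation is that if we can exhibit an embedding $\pi \hookrightarrow A$ in $\mathfrak{Rep}^{\textnormal{ladm}}_{z_i = c_i}(G)$ with $A|_{N_0^-}$ injective in $\mathfrak{Rep}^\infty(N_0^-)$, then injectivity of $\pi$ splits the embedding, so $\pi|_{N_0^-}$ is a direct summand of $A|_{N_0^-}$ and is itself injective. Writing $\pi = \varinjlim_j \pi_j$ as a filtered union of admissible subrepresentations $\pi_j \in \mathfrak{Rep}^{\textnormal{adm}}_{z_i = c_i}(G)$ (possible by local admissibility), Step \ref{step1} of the proof of Theorem \ref{emerton-conjthm} provides for each $j$ an embedding $\pi_j \hookrightarrow A_j$ with $A_j$ admissible and $A_j|_I$ injective in $\mathfrak{Rep}^\infty(I)$; by \cite[Prop. 2.1.11]{emerton:ordII}, this in turn forces $A_j|_{N_0^-}$ to be injective in $\mathfrak{Rep}^\infty(N_0^-)$. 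Coordinating these embeddings into a compatible directed system, the colimit $A := \varinjlim_j A_j$ lies in $\mathfrak{Rep}^{\textnormal{ladm}}_{z_i = c_i}(G)$, contains $\pi$, and has $A|_{N_0^-}$ injective, since injectivity of smooth representations of a pro-$p$ group is preserved under filtered colimits (smooth cohomology commuting with such colimits).

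The main obstacle is organizing the embeddings $\pi_j \hookrightarrow A_j$ into a directed system, since the construction in Step \ref{step1} involves several non-canonical choices (injective envelopes of socles, splittings among injectives over $I$, $K$, and $K'$) which are not obviously functorial in $\pi_j$. One way forward is to extend each inclusion $\pi_j \subset \pi_{j'}$ to a morphism $A_j \to A_{j'}$ in stages: first extend the induced $I$-equivariant map using injectivity of $A_{j'}|_I$, then verify $G_\aff$-equivariance using the rigidity of the diagram description on the Bruhat--Tits tree, and finally propagate through to $H$ and $G$ via the central action and Frobenius reciprocity as in Steps 2 and 3 of the proof of Theorem \ref{emerton-conjthm}. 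Alternatively, one can bypass directed systems entirely by applying the diagram machinery of Step \ref{step1} directly to the locally admissible representation $\pi$, provided one verifies that the relevant injective envelopes of the restrictions $\pi|_K, \pi|_{K'}, \pi|_I$ can be chosen to be locally admissible and that the identification $\textnormal{H}_0(\sB, D)|_I \cong \inj_I(C[I/I_1])^{\oplus n}$ survives with $n$ a possibly infinite cardinal.
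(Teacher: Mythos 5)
Your proposal takes a genuinely different route from the paper, and it has a real gap which you yourself flag but do not close. You want to write $\pi = \varinjlim_j \pi_j$ over admissible subrepresentations, embed each $\pi_j$ into an admissible $A_j$ with $A_j|_I$ injective (via Conjecture~\ref{conjA}), and then pass to the colimit. The problem is precisely the one you name: the $A_j$ produced by Step~\ref{step1} of Theorem~\ref{emerton-conjthm} are constructed via non-canonical choices (injective envelopes, splittings, diagram data on the tree), and none of those constructions are functorial or compatible along the inclusions $\pi_j \subset \pi_{j'}$.

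Note that the paper actually does use a filtered-colimit argument of exactly the flavor you want --- but only in the proof of Theorem~\ref{rk0-thm} (semisimple $F$-rank $0$), in step~\ref{rk0-1}. It works there for a reason you have not reproduced: in rank $0$ one has $G_\aff = I$ compact, and the embedding $\tau \hookrightarrow A_\tau = \inj_{G_\aff}(\tau)$ is an \emph{essential} extension. Injectivity of $\pi$ then extends $\tau \hookrightarrow \pi$ to $\varphi : A_\tau \to \pi$, and essentialness forces $\varphi$ to be injective. The images $\varphi(A_\tau)$ are therefore \emph{subobjects} of $\pi$, forming a directed system of injective admissible subrepresentations exhausting $\pi$ --- no coordination problem arises because one works inside $\pi$. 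In the rank $1$ situation you are in, the $A_j$ from the diagram construction are not injective envelopes, so $\pi_j \hookrightarrow A_j$ is not essential, the extension $A_j \to \pi$ furnished by injectivity of $\pi$ need not be injective, and its image need not inherit the property of being injective on restriction to $I$ or to $N_0^-$. Both of your proposed fixes (trying to make the tree-diagram construction functorial, or running it directly on a locally admissible $\pi$) would require genuinely new verifications that the current argument does not supply.

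The paper avoids all of this. For $\bS\bL_{2/F}$ it invokes the Emerton--Pa\v{s}k\={u}nas results \cite[Thm.~3.4, Cor.~3.8]{emertonpaskunas} directly (adapted with $G^0$ replaced by $\textnormal{SL}_2(\cO_F)$), which deliver injectivity of $\pi|_{\textnormal{SL}_2(\cO_F)}$ for any injective locally admissible $\pi$ without touching Conjecture~\ref{conjA}; that plus \cite[Prop.~2.1.11]{emerton:ordII} gives injectivity on $N_0^-$. The remaining four steps are a careful bootstrap from $\bS\bL_{2/F}$ to the general $\bG$ with $\bG^{\textnormal{sc}}\cong \bS\bL_{2/F}$: pass through the simply connected cover, then a split-off compact central subgroup, then the non-compact central directions (using the $R[t^{\pm1}]$-module trick from \cite[Cor.~3.9]{emertonpaskunas}), then a finite-index reduction to $\cZ G^{\textnormal{der}}$. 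None of this appears in your proposal, and your argument as written does not establish the theorem.
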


We again proceed in several steps.  For the proof, it will be convenient to allow more general coefficient rings.  We therefore let $R$ denote a local Artinian $\bbZ_p$-algebra with finite residue field, and suppose the residue field of $R$ contains $C$.  We refer to \cite[\S 2]{emerton:ordI} for the relevant definitions of smooth, admissible, etc., $G$-representations on $R$-modules.  We denote the relevant categories by appending the subscript ``$R$'' to notation already introduced; so, for example, $\mathfrak{Rep}^\infty_R(G)$ denotes the category of smooth $G$-representations over $R$.

\setcounter{step}{0}
\begin{step}\label{eff-1}
Conjecture \ref{sl2eff} is true when $\bG = \bS\bL_{2/F}$.
\end{step}

\begin{proof}
This follows in exactly the same way as the proofs of Theorem 3.4 and Corollary 3.8 of \cite{emertonpaskunas} (with $G^0$ replaced by $\textnormal{SL}_2(F)$); in particular, if $\pi \in \mathfrak{Rep}^{\textnormal{ladm}}_R(G)$ is an injective object, then $\pi|_{\textnormal{SL}_2(\cO_F)} \in \mathfrak{Rep}^{\infty}_R(\textnormal{SL}_2(\cO_F))$ is injective.  The result then follows from \cite[Prop. 2.1.11]{emerton:ordII}.  
\end{proof}

\begin{step}\label{eff-2}
Conjecture \ref{sl2eff} is true when $\bG$ is semisimple and $\bG^{\textnormal{sc}} \cong \bS\bL_{2/F}$.
\end{step}

\begin{proof}
Let $\bG^{\textnormal{sc}} \stackrel{\textnormal{pr}}{\longrightarrow} \bG$ denote the simply connected cover, and denote its (finite, central) kernel by $\boldsymbol{\Delta}$.  We then have a short exact sequence
$$1 \longrightarrow \boldsymbol{\Delta} \longrightarrow \bG^{\textnormal{sc}} \stackrel{\textnormal{pr}}{\longrightarrow} \bG \longrightarrow 1;$$
taking Galois-fixed points, we get
$$1 \longrightarrow \Delta \longrightarrow G^{\textnormal{sc}} \stackrel{\textnormal{pr}}{\longrightarrow} G \longrightarrow \textnormal{H}^1(F,\boldsymbol{\Delta}).$$
Since $F$ is of characteristic 0, the group $\textnormal{H}^1(F,\boldsymbol{\Delta})$ is finite (\cite[Thm. 6.14]{platonovrapinchuk}), and consequently $H := \textnormal{pr}(G^{\textnormal{sc}})$ is a finite index open subgroup of $G$.  In particular, a locally admissible, injective $G$-representation remains locally admissible and injective after restriction to $H$ (cf. \cite[Lem. 2.2, Prop. 2.3]{emertonpaskunas}).  Since $\textnormal{pr}$ induces an isomorphism between unipotent radicals of parabolic subgroups of $\bG^{\textnormal{sc}}$ and $\bG$, it therefore suffices to prove the claim with $G$ replaced by $H$.

Let $\pi$ denote an injective locally admissible $H$-representation over $R$.  Since $H \cong G^{\textnormal{sc}}/\Delta$, we may inflate $\pi$ to a locally admissible $G^{\textnormal{sc}}$-representation over $R$ (denoted by the same symbol).  Let 
$$\pi \longhookrightarrow A$$ 
denote an injective envelope of $\pi$ in the category $\mathfrak{Rep}^{\textnormal{ladm}}_R(G^{\textnormal{sc}})$.  Since $\Delta$ acts trivially on $\pi$, this injection factors as 
$$\pi \longhookrightarrow A^{\Delta} \longhookrightarrow A.$$
Note that we have an equivalence of categories between $\mathfrak{Rep}^{\textnormal{ladm}, \Delta = 1}_R(G^{\textnormal{sc}})$ (locally admissible representations of $G^{\textnormal{sc}}$ on which $\Delta$ acts trivially) and $\mathfrak{Rep}^{\textnormal{ladm}}_R(H)$.  Viewed in the latter category, the map $\pi \longhookrightarrow A^{\Delta}$ is an essential injection from an injective object.  Therefore, we must have $\pi \cong A^{\Delta}$ as $H$-representations (and thus also as $G^{\textnormal{sc}}$-representations).

By the proof of step \ref{eff-1}, we have that $A|_{\textnormal{SL}_2(\cO_F)}$ is injective in $\mathfrak{Rep}^{\infty}_R(\textnormal{SL}_2(\cO_F))$.  Since $\Delta N_0^- \subset \textnormal{SL}_2(\cO_F)$, \cite[Prop. 2.1.11]{emerton:ordII} implies that $A|_{\Delta N_0^-}$ is injective in $\mathfrak{Rep}^{\infty}_R(\Delta N_0^-)$.  Now, the functor 
\begin{eqnarray*}
\mathfrak{Rep}^{\infty}_R(\Delta N_0^-) & \longrightarrow & \mathfrak{Rep}^{\infty, \Delta = 1}_R(\Delta N_0^-)\\
 \tau & \longmapsto & \tau^\Delta
\end{eqnarray*} 
is right adjoint to the (exact) forgetful functor.  In particular, the above functor preserves injectives, so $A^\Delta|_{\Delta N_0^-}$ is an injective object of $\mathfrak{Rep}^{\infty, \Delta = 1}_R(\Delta N_0^-)$ (cf. \cite[pf. of Lem. 2.5]{emertonpaskunas}).  Using the equivalence of categories $\mathfrak{Rep}^{\infty, \Delta = 1}_R(\Delta N_0^-) \cong \mathfrak{Rep}^{\infty}_R(N_0^-)$ gives the result.
\end{proof}

\begin{step}\label{eff-3}
Conjecture \ref{sl2eff} is true when $\bG^{\textnormal{sc}} \cong \bS\bL_{2/F}$, and with \textnormal{``}$\mathfrak{Rep}^{\textnormal{ladm}}(G)$\textnormal{''} replaced by \textnormal{``}$\mathfrak{Rep}^{\textnormal{ladm}}_R(\widetilde{\mathcal{Z}}_0G^{\textnormal{der}})$\textnormal{''}.
\end{step}

\begin{proof}
We let $\cZ_1 \supset \cZ_2 \supset \ldots$ denote a decreasing sequence of open subgroups of $\widetilde{\cZ}_0$ such that $\bigcap_{i \geq 1} \cZ_i = \{1\}$ and $\cZ_i \cap G^{\textnormal{der}} = \{1\}$ for every $i \geq 1$ (this is possible since $\cZ \cap G^{\textnormal{der}}$ is finite).  Define $H_i := \cZ_i G^{\textnormal{der}}$.  Then each $H_i$ is a finite index open subgroup of $\widetilde{\cZ}_0G^{\textnormal{der}}$.

Let $\pi \in \mathfrak{Rep}^{\textnormal{ladm}}_R(\widetilde{\mathcal{Z}}_0G^{\textnormal{der}})$ be an injective object.  Since $H_i$ is of finite index in $\widetilde{\cZ}_0G^{\textnormal{der}}$, the representation $\pi|_{H_i}$ is locally admissible and injective.  As in the proof of part \ref{eff-2}, the functor 
\begin{eqnarray*}
\mathfrak{Rep}^{\textnormal{ladm}}_R(H_i) & \longrightarrow & \mathfrak{Rep}^{\textnormal{ladm}, \cZ_i = 1}_R(H_i)\\
\tau & \longmapsto & \tau^{\cZ_i}
\end{eqnarray*}
preserves injectives.  Therefore, $\pi^{\cZ_i}|_{H_i}$ is an injective object of $\mathfrak{Rep}^{\textnormal{ladm}, \cZ_i = 1}_R(H_i)$.  Since this category is equivalent to $\mathfrak{Rep}^{\textnormal{ladm}}_R(G^{\textnormal{der}})$, part \ref{eff-2} implies that $\pi^{\cZ_i}|_{N_0^-}$ is injective in $\mathfrak{Rep}^{\infty}_R(N_0^-)$.

By smoothness, we may write $\pi$ as an inductive limit
$$\pi = \varinjlim_{i}~ \pi^{\cZ_i}.$$
Restricting to $N_0^-$, we obtain
$$\pi|_{N_0^-} = \varinjlim_{i}~ (\pi^{\cZ_i}|_{N_0^-}).$$
Since $\pi^{\cZ_i}|_{N_0^-}$ is injective, \cite[Prop. 2.1.3]{emerton:ordII} implies that $\pi|_{N_0^-}$ is injective in $\mathfrak{Rep}^{\infty}_R(N_0^-)$.  
\end{proof}

\begin{step}\label{eff-4}
Conjecture \ref{sl2eff} is true when $\bG^{\textnormal{sc}} \cong \bS\bL_{2/F}$, and with \textnormal{``}$\mathfrak{Rep}^{\textnormal{ladm}}(G)$\textnormal{''} replaced by \textnormal{``}$\mathfrak{Rep}^{\textnormal{ladm}}_R(\mathcal{Z}G^{\textnormal{der}})$\textnormal{''}.
\end{step}

\begin{proof}
By induction on the rank of $\cZ/\widetilde{\cZ}_0$, it suffices to assume $\cZ/\widetilde{\cZ}_0 = \langle z_1 \rangle \cong \bbZ$.  We outline the proof, which closely follows the proof of \cite[Cor. 3.9]{emertonpaskunas}.

Let $\pi \in \mathfrak{Rep}^{\textnormal{ladm}}_R(\cZ G^{\textnormal{der}})$ be an injective object. We view $\pi$ as a $\cZ G^{\textnormal{der}}$-representation over $R[t^{\pm 1}]$, with $t$ acting via $z_1$.  We may then write
$$\pi = \bigoplus_{\fn \in \textnormal{m-Spec}(R[t^{\pm 1}])} \pi[\fn^{\infty}],$$
where $\pi[\fn^{\infty}] = \varinjlim_i \pi[\fn^i]$.  Let us write $\fn = (\fm, f)$, where $\fm$ is the maximal ideal of $R$ and $f \in R[t]$ is a monic polynomial.  Since $R$ is Artinian, we obtain 
$$\pi[\fn^{\infty}] = \pi[f^\infty] = \varinjlim_i~ \pi[f^i].$$

We claim that $\pi[f^i]|_{\widetilde{\cZ}_0 G^{\textnormal{der}}} \in \mathfrak{Rep}^{\textnormal{ladm}}_R(\widetilde{\mathcal{Z}}_0G^{\textnormal{der}})$ is injective.  To see this, note first that $\pi[f^i]$ is injective in the category $\mathfrak{Rep}^{\textnormal{ladm}, f^i = 0}_R(\cZ G^{\textnormal{der}})$ (locally admissible $\cZ G^{\textnormal{der}}$-representations which are annihilated by $f^i$), by an argument similar to the proofs of parts \ref{eff-2} and \ref{eff-3}.  Set $Q := R[t^{\pm 1}]/(f^i)$.  Then there is an equivalence of categories between $\mathfrak{Rep}^{\textnormal{ladm}, f^i = 0}_R(\cZ G^{\textnormal{der}})$ and $\mathfrak{Rep}^{\textnormal{ladm}}_{Q}(\widetilde{\cZ}_0 G^{\textnormal{der}})$.  Further, since $Q$ is free of finite rank over $R$, the functor 
\begin{eqnarray*}
\mathfrak{Rep}^{\textnormal{ladm}}_{R}(\widetilde{\cZ}_0 G^{\textnormal{der}}) & \longrightarrow & \mathfrak{Rep}^{\textnormal{ladm}}_{Q}(\widetilde{\cZ}_0 G^{\textnormal{der}}) \\
\tau & \longmapsto & Q \otimes_{R} \tau
\end{eqnarray*}
is exact.  Consequently, the forgetful right adjoint functor $\mathfrak{Rep}^{\textnormal{ladm}}_{Q}(\widetilde{\cZ}_0 G^{\textnormal{der}})  \longrightarrow  \mathfrak{Rep}^{\textnormal{ladm}}_{R}(\widetilde{\cZ}_0 G^{\textnormal{der}})$ preserves injectives.  This gives the claim.

Finally, since $\pi[f^i]|_{\widetilde{\cZ}_0 G^{\textnormal{der}}}$ is injective in $\mathfrak{Rep}^{\textnormal{ladm}}_R(\widetilde{\mathcal{Z}}_0G^{\textnormal{der}})$, part \ref{eff-3} implies that $\pi[f_i]|_{N_0^-}$ is injective in $\mathfrak{Rep}^{\infty}_R(N_0^-)$.  Since $\pi$ is an inductive limit of the $\pi[f^i]$, for various monic polynomials $f$ of $R[t]$, \cite[Prop. 2.1.3]{emerton:ordII} gives the result for $\pi|_{N_0^-}$.  
\end{proof}

\begin{step}\label{eff-5}
Conjecture \ref{sl2eff} is true when $\bG^{\textnormal{sc}} \cong \bS\bL_{2/F}$.
\end{step}

\begin{proof}
We have the following short exact sequence of groups
$$1 \longrightarrow \boldsymbol{\cZ} \cap \bG^{\textnormal{der}} \longrightarrow \boldsymbol{\cZ} \times \bG^{\textnormal{der}} \stackrel{\times}{\longrightarrow} \bG \longrightarrow 1,$$
where $\boldsymbol{\cZ} \cap \bG^{\textnormal{der}}$ is finite.  Taking Galois invariants, we obtain
$$1 \longrightarrow \cZ \cap G^{\textnormal{der}} \longrightarrow \cZ \times G^{\textnormal{der}} \stackrel{\times}{\longrightarrow} G \longrightarrow \textnormal{H}^1(F,\boldsymbol{\cZ} \cap \bG^{\textnormal{der}}).$$
Appealing to \cite[Thm. 6.14]{platonovrapinchuk} once again, we get that $\textnormal{H}^1(F,\boldsymbol{\cZ} \cap \bG^{\textnormal{der}})$ is finite, so that $\cZ G^{\textnormal{der}}$ is a finite index open subgroup of $G$.

Now let $\pi \in \mathfrak{Rep}^{\textnormal{ladm}}(G)$ be injective.  As before, the representation $\pi|_{\cZ G^{\textnormal{der}}} \in \mathfrak{Rep}^{\textnormal{ladm}}(\cZ G^{\textnormal{der}})$ is injective; since $N_0^- \subset \cZ G^{\textnormal{der}}$, part \ref{eff-4} implies the result.
\end{proof}

\begin{rmk}
The proofs of steps \ref{eff-2} through \ref{eff-5} above hold (with minor modifications) for arbitrary reductive groups.  In particular, Conjecture \ref{sl2eff} is implied by the following conjecture:

\begin{conj}
Suppose $\bG$ is semisimple and simply connected, and let $K$ denote a compact open subgroup of $G$ containing the center of $G$.  Let $\pi \in \mathfrak{Rep}^{\textnormal{ladm}}(G)$ be an injective object.  Then $\pi|_{K}$ is injective in $\mathfrak{Rep}^{\infty}(K)$.  Consequently we have $\textnormal{H}^i\ord^G_{P^-} \simeq \textnormal{R}^i\ord^G_{P^-}$ for $i > 0$.  
\end{conj}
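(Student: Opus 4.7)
The plan is to reduce the statement to a single base-case assertion and then deploy exactly the mechanism of steps \ref{eff-2}--\ref{eff-5} above. More precisely, once one shows that there exists a single compact open subgroup $K_0 \supset \cZ$ for which $\pi|_{K_0}$ is injective in $\mathfrak{Rep}^\infty(K_0)$ whenever $\pi$ is injective in $\mathfrak{Rep}^{\textnormal{ladm}}(G)$, the same assertion for any other compact open $K \supset \cZ$ follows by commensurability of $K$ and $K_0$ together with the fact that, for profinite groups, induction from a finite-index open subgroup is exact and hence preserves injectives. Applying \cite[Prop. 2.1.11]{emerton:ordII} then gives injectivity of $\pi|_{N_0^-}$ and thereby effaceability of $\textnormal{H}^i\ord^G_{P^-}$ for $i > 0$.

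For the base case I would take $K_0 := \cZ \cdot K_x$, where $K_x$ is the parahoric attached to a hyperspecial (or, for non-quasi-split $\bG$, a special) vertex $x$ of the semisimple Bruhat--Tits building of $G$. Since $\bG$ is simply connected semisimple, the center $\cZ$ is finite, so one may decompose $\pi = \bigoplus_\chi \pi[\chi]$ along characters $\chi$ of $\cZ$, with each summand injective in the $\chi$-typical subcategory of $\mathfrak{Rep}^{\textnormal{ladm}}(G)$. Dualizing via Pontryagin duality, the question becomes: if $M$ is a projective pseudocompact $\Lambda(G)$-module on which $\cZ$ acts by $\chi$, is $M|_{\Lambda(K_0)}$ projective? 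I would attempt to mimic the $\bS\bL_{2/F}$ argument of \cite{emertonpaskunas}: write $\pi[\chi] = \varinjlim \pi_i$ as a directed union of admissible subrepresentations, construct injective envelopes $\inj_G(\pi_i)$ inside the $\chi$-typical locally admissible category, identify their restrictions to $K_0$, and pass to the limit via \cite[Prop. 2.1.3]{emerton:ordII}.

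The main obstacle is precisely this identification of $\inj_G(\pi_i)|_{K_0}$. For $\bG = \bS\bL_{2/F}$ it rests on the Bass--Serre amalgamation $G_\aff = K *_I K'$ and the diagrammatic construction of injective hulls from coefficient systems on the one-dimensional Bruhat--Tits tree, culminating in \cite[Prop. 5.3.5]{paskunas:diags}. In higher semisimple rank the building is at least two-dimensional, and no analogous higher-dimensional theory of admissible diagrams is presently known; even the basic question of whether $\inj_G(\sigma)$ is admissible for an irreducible admissible $\sigma$ is open for general $\bG$. A plausible route forward is to develop a higher-dimensional Pa\v{s}k\={u}nas-style theory of admissible coefficient systems on the Bruhat--Tits building, perhaps inductively along Levi strata using the derived ordinary parts formalism developed in the body of this paper; but any such construction will demand representation-theoretic input substantially beyond what the current literature provides.
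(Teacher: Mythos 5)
The statement you set out to prove is stated in the paper as a \emph{conjecture}, not as a theorem: the remark following Step~\ref{eff-5} observes only that the reduction steps~\ref{eff-2}--\ref{eff-5} of Theorem~\ref{sl2effthm} carry over to arbitrary reductive $\bG$, so that Conjecture~\ref{sl2eff} would follow from the present statement, but it offers no proof of the statement itself. Your write-up is likewise not a proof, as you acknowledge, and its substance is a diagnosis of the obstruction. That diagnosis is essentially correct: what is missing is the base case, namely a way of producing, for higher-rank $\bG$, enough locally admissible $G$-representations restricting to injectives of a fixed compact open $K_0$. In rank one this is supplied by the Bruhat--Tits tree, the Bass--Serre amalgamation $G_\aff = K *_I K'$, and Pa\v{s}k\={u}nas's theory of admissible coefficient systems (\cite[Prop.~5.3.5]{paskunas:diags}, \cite[Thm.~3.4, Cor.~3.8]{emertonpaskunas}), and there is presently no higher-dimensional analogue of that machinery --- which is exactly why the paper leaves the statement as a conjecture rather than proving it.

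Two of your preliminary reductions are argued incorrectly, however, and one of them can genuinely fail. For the commensurability reduction: the fact that compact induction from a finite-index open subgroup is exact, so that its right adjoint $\Ind_{K\cap K_0}^{K}$ preserves injectives, shows that $\Ind_{K\cap K_0}^{K}(\pi|_{K\cap K_0})$ is an injective $K$-representation; it does not show that $\pi|_K$ is, since $\pi|_K$ need not be a $K$-equivariant direct summand of this induction. The implication ``$\pi|_{K'}$ injective for $K' \leq K$ open $\Rightarrow$ $\pi|_K$ injective'' can fail when $p$ divides $[K:K']$ (take $K = \bbZ/p \times \bbZ_p$, $K' = \bbZ_p$, $\pi = C^\infty(\bbZ_p)$ with trivial $\bbZ/p$-action: then $\pi|_{K'}$ is injective but $\pi^\vee$ is not projective over $\Lambda(K)$). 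Since $K$ here is merely assumed compact open and containing the center of $G$, which may have order divisible by $p$ (e.g.\ $\bG = \bS\bL_p$ over $F$ containing the $p$-th roots of unity), this is not a hypothetical concern. Second, the decomposition $\pi = \bigoplus_\chi \pi[\chi]$ along characters of the finite center is only a decomposition into strict eigenspaces when $p \nmid |Z(G)|$; in general one must either pass to generalized eigenspaces or, better, imitate the paper's Step~\ref{eff-2} and use the invariants functor $\tau \mapsto \tau^{\Delta}$, which preserves injectives because it is right adjoint to the exact forgetful functor. (Note also that for $\bG$ semisimple the group $\boldsymbol{\cZ}$ of the paper, being the connected center, is trivial; the $\cZ$ in your proposal should read as the full finite center $Z(G)$, which is what the conjecture refers to.)
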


\end{rmk}

\vspace{5pt}

\subsection{On Conjecture \ref{conjC}}

When $\bG$ has semisimple $F$-rank 0, we can refine the construction of Theorem \ref{emerton-conjthm}.

\begin{conj}\label{rk0}
Suppose $\pi \in \mathfrak{Rep}^{\textnormal{ladm}}(G)$ is injective, and suppose $I$ is an Iwahori subgroup of $G$.  Then the restriction $\pi|_I \in \mathfrak{Rep}^\infty(I)$ is injective.  
\end{conj}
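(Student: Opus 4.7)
The plan is to follow the reduction pattern of the proof of Theorem~\ref{sl2effthm}, exploiting the fact that for semisimple $F$-rank zero the Iwahori equals the Kottwitz kernel, $I = G_\aff$, which is a compact open subgroup of $G$. In this setting $\cZ \cap G_\aff = \widetilde{\cZ}_0$ (by Kottwitz's theorem applied to the torus $\boldsymbol{\cZ}$, whose dual equals the dual of the center of $\bG$), so $\cZ/\widetilde{\cZ}_0$ injects into $G/G_\aff$ as a finite-index sublattice. Consequently, the subgroup $G^\sharp := G_\aff \cdot \langle z_1,\ldots,z_s\rangle$ is open of finite index in $G$, and since the $z_i$ are central with $\langle z_1,\ldots,z_s\rangle \cap G_\aff = \{1\}$, one has an internal direct product decomposition $G^\sharp = I \times \bbZ^s$.

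First I would restrict $\pi$ from $G$ to $G^\sharp$; since $G^\sharp$ has finite index in $G$, this restriction remains injective in $\mathfrak{Rep}^{\textnormal{ladm}}(G^\sharp)$ by \cite[Lem. 2.2, Prop. 2.3]{emertonpaskunas}. Next I would descend from $G^\sharp$ to $I$ by imitating Step~\ref{eff-4} of the proof of Theorem~\ref{sl2effthm}. Viewing $\pi|_{G^\sharp}$ as an $I$-representation with a commuting action of $C[t_1^{\pm 1},\ldots,t_s^{\pm 1}]$ via $t_j = z_j$, I would decompose $\pi|_{G^\sharp} = \bigoplus_{\fn} \pi[\fn^\infty]$ indexed by maximal ideals $\fn$ of this polynomial ring, and write each $\pi[\fn^\infty] = \varinjlim_i \pi[\fn^i]$. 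Each $\pi[\fn^i]$ is injective in the subcategory $\mathfrak{Rep}^{\textnormal{ladm},\fn^i=0}(G^\sharp)$, and this subcategory coincides with $\mathfrak{Rep}^\infty_Q(I)$ for the finite $C$-algebra $Q := C[t_1^{\pm 1},\ldots,t_s^{\pm 1}]/\fn^i$ (using that $I$ is compact, so every smooth $I$-representation over $Q$ is automatically locally admissible). Because $Q$ is free of finite rank over $C$, the tensor functor $\tau \mapsto Q\otimes_C \tau$ is exact and its right adjoint, the forgetful functor, preserves injectivity; hence $\pi[\fn^i]|_I$ is injective in $\mathfrak{Rep}^\infty(I)$.

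Finally, the inductive limit $\pi[\fn^\infty]|_I = \varinjlim_i \pi[\fn^i]|_I$ is injective by \cite[Prop. 2.1.3]{emerton:ordII}, and the direct sum over $\fn$ is injective in $\mathfrak{Rep}^\infty(I)$ (by Pontryagin duality, direct sums of injective smooth representations of the profinite group $I$ correspond to direct products of projective pseudocompact $\Lambda(I)$-modules, each a direct summand of a product of copies of $\Lambda(I)$). This yields that $\pi|_I$ is injective, as required. The main obstacle I anticipate is the careful verification of the categorical equivalence $\mathfrak{Rep}^{\textnormal{ladm},\fn^i=0}(G^\sharp) \cong \mathfrak{Rep}^\infty_Q(I)$ together with the preservation of injectivity under restriction of scalars from $Q$ to $C$; this is a compact-group analogue of the argument in Step~\ref{eff-4} of Theorem~\ref{sl2effthm}, where the role played there by $\widetilde{\cZ}_0 G^\der$ is here played by the compact Iwahori $I = G_\aff$ itself.
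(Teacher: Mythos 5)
Your argument is correct and follows a genuinely different route from the paper's. The paper reduces along the chain $G \supset \cZ G_\aff \supset \widetilde{\cZ}_0 G_\aff \supset G_\aff$, with the innermost step (Step~\ref{rk0-1}) showing that an injective object of $\mathfrak{Rep}^{\textnormal{ladm}}(G_\aff)$ is injective in $\mathfrak{Rep}^\infty(G_\aff)$ by exhibiting it as a filtered union of admissible injective envelopes (relying on Pa\v{s}k\={u}nas's result that injective envelopes of admissible representations of a compact $p$-adic group remain admissible), followed by finite-index reductions and a one-variable-at-a-time Laurent ring decomposition in the style of Emerton--Pa\v{s}k\={u}nas. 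You instead pass directly to $G^\sharp \cong I \times \bbZ^s$ and handle all central variables at once via the $s$-variable Laurent ring. The shortcut you exploit is that for the compact group $I$ every cyclic smooth representation is finite-dimensional, so the locally admissible and smooth categories over the finite $C$-algebra $Q$ coincide; this replaces the paper's injective-envelope argument at the bottom of the chain with the elementary observation that restriction of scalars along $C \to Q$ (being right adjoint to the exact $Q \otimes_C -$) preserves injectives. Two points deserve a touch more care. First, the decomposition $\pi|_{G^\sharp} = \bigoplus_{\fn}\pi[\fn^\infty]$ over maximal ideals of the multivariable Laurent ring needs the verification that $\pi$ is locally finite as a $C[t_1^{\pm1},\ldots,t_s^{\pm1}]$-module; this does follow from local admissibility (the central action on the $K$-fixed vectors of any cyclic subrepresentation factors through a finite-colength quotient), but the paper sidesteps it by inducting to $s=1$ where only the PID torsion decomposition is required. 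Second, the parenthetical appeal to Kottwitz's theorem to assert $\cZ \cap G_\aff = \widetilde{\cZ}_0$ is stronger than necessary and not obviously correct as stated; what you actually use is the inclusion $\cZ \cap G_\aff \subset \widetilde{\cZ}_0$, which holds simply because $G_\aff$ is compact in semisimple $F$-rank zero, and this is enough to conclude $\langle z_1,\ldots,z_s\rangle \cap G_\aff = \{1\}$.
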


\begin{thm}
\label{rk0-thm}
Suppose $\bG$ has semisimple $F$-rank $0$.  Then Conjecture \ref{rk0} is true.  
\end{thm}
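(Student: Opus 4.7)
The plan is to mirror the structure of Steps~3--4 in the proof of Theorem~\ref{sl2effthm}, exploiting the simplification that when $\bG$ has semisimple $F$-rank zero the Bruhat--Tits building of $G$ is a point, so $G_{\aff} = I$ is compact; in particular $\widetilde{\cZ}_0 \subset I$, and $\langle z_1, \ldots, z_s\rangle \cap I$ is a discrete torsion-free subgroup of the compact group $I$, hence trivial.

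First I would reduce from $G$ to $H := \langle z_1, \ldots, z_s\rangle I$. Since the $z_i$ are central in $G$ and commute with $I$, the multiplication map is an isomorphism of topological groups $\bbZ^s \times I \stackrel{\sim}{\longrightarrow} H$. Moreover $H$ has finite index in $G$, so $\Ind_H^G = \cind_H^G$ is an exact left adjoint to restriction which preserves local admissibility; consequently, restriction $\mathfrak{Rep}^{\textnormal{ladm}}(G) \to \mathfrak{Rep}^{\textnormal{ladm}}(H)$ preserves injective objects. It thus suffices to prove: for $\pi$ injective in $\mathfrak{Rep}^{\textnormal{ladm}}(H)$, the restriction $\pi|_I$ is injective in $\mathfrak{Rep}^\infty(I)$.

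Set $R := C[z_1^{\pm 1}, \ldots, z_s^{\pm 1}]$. A locally admissible $H$-representation amounts to a smooth $I$-representation with a commuting locally finite $R$-action, and this local finiteness produces the primary decomposition
$$\pi \;=\; \bigoplus_{\fn \in \textnormal{m-Spec}(R)} \pi[\fn^\infty], \qquad \pi[\fn^\infty] = \varinjlim_{i} \pi[\fn^i],$$
each summand again injective and each $\pi[\fn^i]$ injective in the subcategory $\mathfrak{Rep}^{\textnormal{ladm}, \fn^i = 0}(H)$.  By the Nullstellensatz, $R/\fn$ is a finite extension of $C$, so $R/\fn^i$ is finite-dimensional over $C$; thus local finiteness of the $R$-action becomes automatic on the $\fn^i$-torsion part, and $\mathfrak{Rep}^{\textnormal{ladm}, \fn^i = 0}(H) \simeq \mathfrak{Rep}^\infty_{R/\fn^i}(I)$. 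Exactly as in Step~4 of the proof of Theorem~\ref{sl2effthm}, the forgetful functor $\mathfrak{Rep}^\infty_{R/\fn^i}(I) \to \mathfrak{Rep}^\infty(I)$ has the exact left adjoint $\tau \mapsto (R/\fn^i) \otimes_C \tau$ and therefore preserves injectives, so each $\pi[\fn^i]|_I$ is injective in $\mathfrak{Rep}^\infty(I)$.

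The last step is to pass back: $\pi|_I = \bigoplus_\fn \varinjlim_i \pi[\fn^i]|_I$. One needs that the class of injective objects of $\mathfrak{Rep}^\infty(I)$ is closed under filtered colimits and direct sums, which is the main technical point here: although $I$ need not be pro-$p$ and so \cite[Prop. 2.1.3]{emerton:ordII} does not apply directly, the category $\mathfrak{Rep}^\infty(I)$ is locally noetherian (every smooth $I$-representation is a directed union of finite-dimensional subrepresentations, compare Step~1 of the proof of Theorem~\ref{emerton-conjthm}), so the desired closure properties follow from the classical Grothendieck--Gabriel theorem on locally noetherian Grothendieck categories. Assembling these observations yields the injectivity of $\pi|_I$, completing the argument.
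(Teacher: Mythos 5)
Your proof is correct and captures the same essential mechanism as the paper's, namely a finite-index reduction, followed by a primary decomposition along the central lattice $\langle z_1,\dots,z_s\rangle$, followed by a passage through smooth representations over finite $C$-algebras and then a colimit argument. The paper organizes this as a chain of reductions $G_\aff \subset \widetilde{\cZ}_0 G_\aff \subset \cZ G_\aff \subset G$ (Steps~1--4), with the primary decomposition carried out one central variable at a time over a local Artinian coefficient ring, by induction on the rank of $\cZ/\widetilde{\cZ}_0$, citing \cite[Cor.~3.9]{emertonpaskunas}. You instead jump directly to $H = \langle z_1,\dots,z_s\rangle I \cong \bbZ^s \times I$, decompose in all $s$ variables at once over $R = C[z_1^{\pm 1},\dots,z_s^{\pm 1}]$ using the Nullstellensatz to see that $R/\fn^i$ is finite over $C$, and replace the paper's appeal to \cite[Prop.~2.1.3]{emerton:ordII} with the observation that $\mathfrak{Rep}^\infty(I)$ is locally noetherian (generated by the finite-dimensional $C[I/K]$). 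Both routes work; yours is somewhat more streamlined, while the paper's inductive formulation is closer to the framework set up in its Appendix for the higher-rank conjectures and shares notation with the proof of Theorem~\ref{sl2effthm}. One small caveat: your parenthetical claim that $\widetilde{\cZ}_0 \subset I$ is not used anywhere in your argument, and is in fact not guaranteed in general (the paper's Step~\ref{rk0-2} from $G_\aff$ to $\widetilde{\cZ}_0 G_\aff$ would be vacuous if it were); what you actually need, and correctly do use, is only that $\langle z_1,\dots,z_s\rangle \cap I$ is trivial and that $H$ has finite index in $G$, which is true because $[\widetilde{\cZ}_0 : \widetilde{\cZ}_0 \cap G_\aff]$ is finite.
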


The proof again proceeds in several steps.

\setcounter{step}{0}
\begin{step}\label{rk0-1}
Conjecture \ref{rk0} is true when $\bG$ has semisimple $F$-rank $0$ and with \textnormal{``}$\mathfrak{Rep}^{\textnormal{ladm}}(G)$\textnormal{''} is replaced by \textnormal{``}$\mathfrak{Rep}^{\textnormal{ladm}}(G_\aff)$\textnormal{''}.
\end{step}

\begin{proof}
Suppose $\pi \in \mathfrak{Rep}^{\textnormal{ladm}}(G_\aff)$ is injective.  Let $\sA$ denote the set of admissible subrepresentations of $\pi$; this set is filtered, and we have
$$\pi \cong \varinjlim_{\tau \in \sA} \tau.$$
By the proof of step \ref{step1} of Theorem \ref{emerton-conjthm}, there exists $A_\tau \in \mathfrak{Rep}^{\textnormal{adm}}(G_\aff)$ such that $\iota_\tau:\tau \longhookrightarrow A_\tau$ and $A_\tau \cong \inj_{G_\aff}(\tau)$.  Since $\pi$ is injective, we have the following commutative diagram in $\mathfrak{Rep}^{\textnormal{ladm}}(G_\aff)$:
\begin{center}
\begin{tikzcd}
0 \ar[r] & \tau \ar[r, "\iota_\tau"] \ar[d, hookrightarrow] & A_\tau \ar[dl, dashrightarrow, "\exists~\!\varphi"] \\
 & \pi & 
\end{tikzcd}
\end{center}
If $\ker(\varphi)$ was nonzero, then we would have $\ker(\varphi) \cap \iota_\tau(\tau) \neq 0$ by essentialness of $\iota_\tau:\tau \longhookrightarrow A_\tau$, contradicting the injectivity of $\tau \longhookrightarrow \pi$.  Therefore, if we denote by $\sI$ the set of admissible subrepresentations of $\pi$ which are injective in $\mathfrak{Rep}^{\infty}(G_\aff)$, we get 
$$\pi \cong \varinjlim_{A \in \sI} A.$$
Viewing this inductive limit in $\mathfrak{Rep}^{\infty}(G_\aff)$ and using \cite[Prop. 2.1.3]{emerton:ordII}, we get that $\pi$ is injective in $\mathfrak{Rep}^{\infty}(G_\aff)$.  
\end{proof}

\begin{step}\label{rk0-2}
Conjecture \ref{rk0} is true when $\bG$ has semisimple $F$-rank $0$ and with \textnormal{``}$\mathfrak{Rep}^{\textnormal{ladm}}(G)$\textnormal{''} is replaced by \textnormal{``}$\mathfrak{Rep}^{\textnormal{ladm}}(\widetilde{\cZ}_0 G_\aff)$\textnormal{''}.
\end{step}

\begin{proof}
Suppose $\pi \in \mathfrak{Rep}^{\textnormal{ladm}}(\widetilde{\cZ}_0 G_\aff)$ is injective.  Since $G_\aff$ is of finite index in $\widetilde{\cZ}_0 G_\aff$, \cite[Lem. 2.2, Prop. 2.3]{emertonpaskunas} implies that $\pi|_{G_\aff}$ is locally admissible and injective.  The claim then follows from step \ref{rk0-1}.
\end{proof}

\begin{step}\label{rk0-3}
Conjecture \ref{rk0} is true when $\bG$ has semisimple $F$-rank $0$ and with \textnormal{``}$\mathfrak{Rep}^{\textnormal{ladm}}(G)$\textnormal{''} is replaced by \textnormal{``}$\mathfrak{Rep}^{\textnormal{ladm}}(\cZ G_\aff)$\textnormal{''}.
\end{step}

\begin{proof}
By induction on the rank of $\cZ/\widetilde{\cZ}_0$, it suffices to assume $\cZ/\widetilde{\cZ}_0 = \langle z_1 \rangle \cong \bbZ$.  The argument then follows exactly as in the proof of \cite[Cor. 3.9]{emertonpaskunas}, using step \ref{rk0-2} (see also the proof of step \ref{eff-4} in the previous subsection).
\end{proof}

\begin{step}\label{rk0-4}
Conjecture \ref{rk0} is true when $\bG$ has semisimple $F$-rank $0$.
\end{step}

\begin{proof}
Suppose $\pi \in \mathfrak{Rep}^{\textnormal{ladm}}(G)$ is injective.  Since $\cZ G_\aff$ is of finite index in $G$, the restriction $\pi|_{\cZ G_\aff}$ is locally admissible and injective by \cite[Lem. 2.2, Prop. 2.3]{emertonpaskunas}.  The claim then follows from step \ref{rk0-3}.
\end{proof}

\bibliographystyle{amsalpha}
\bibliography{refs}

\end{document}